\newtheorem{theorem}{Theorem}[section]
\newcommand{\dif}{\mathrm{d}}
\newcommand{\N}{\mathbb{N}}
\newcommand{\Z}{\mathbb{Z}}
\newcommand{\weakc}{\rightharpoonup}
\newcommand{\R}{\mathbb{R}}
\newcommand{\C}{\mathbb{C}}
\newcommand{\vecu}{{\bf u}}
\newcommand{\vecw}{  {\bf w} }
\newcommand{\vecv}{  {\bf v}  }
\newcommand{\E}{{\bf E}}
\newcommand{\F}{{\bf F}}
\newcommand{\grad}{\nabla}
\newcommand{\curl}{\nabla \times}
\newcommand{\ep}{\varepsilon}
\newcommand{\ov}{\overline}
\begin{document}
\setlength{\parskip}{1mm}
\setlength{\oddsidemargin}{0.1in}
\setlength{\evensidemargin}{0.1in}
\lhead{}
\rhead{}

{\bf \Large \noindent Analysis of two transmission eigenvalue problems with a coated boundary condition}

\begin{center}
Isaac Harris  \\
Department of Mathematics\\
Purdue University \\
West Lafayette, IN 47907\\
E-mail: harri814@purdue.edu
\end{center}

\begin{abstract}
\noindent In this paper, we investigate two transmission eigenvalue problems associated with the scattering of a media with a coated boundary. In recent years, there has been a lot of interest in studying these eigenvalue problems. It can be shown that the eigenvalues can be recovered from the scattering data and hold information about the material properties of the media one wishes to determine. Motivated by recent works we will study the electromagnetic transmission eigenvalue problem and scalar `zero-index' transmission eigenvalue problem for a media with a coated boundary. Existence of infinitely many real eigenvalues will be proven as well as showing that the eigenvalues depend monotonically on the refractive index and boundary parameter. Numerical examples in two spatial dimensions are presented for the scalar `zero-index' transmission eigenvalue problem. Also, in our investigation we prove that as the boundary parameter tends to zero and infinity we recover the classical eigenvalue problems. 
\end{abstract}

\noindent {\bf Keywords}: inverse scattering, transmission eigenvalues, inverse spectral problem, impedance boundary condition.

\section{Introduction}
In this paper, we study two interior transmission eigenvalue problems that arises from the scattering of a media with a coated boundary. Transmission eigenvalue problems have been a very active field of research in the theory of inverse scattering. See the manuscripts \cite{TE-book} for a detailed account of the main results and techniques for these eigenvalue problems for the scalar scattering problem. First, we will study the problem for the electromagnetic scattering that is analogous to the problem studied in \cite{te-cbc} and \cite{te-cbc2}. Next, being motivated by the new eigenvalue problems studied in \cite{Aniso-stekloff} and \cite{zi-te} we will consider the `zero-index' transmission eigenvalue problem for the scalar scattering problem. This transmission eigenvalue problem arises by artificially imbedding the scattering object mathematically in a background where in the scatterer the refractive index is {\it zero} (see for e.g. \cite{zi-te}). One can also artificially imbed  the scattering object in a background where the refractive index is negative (see for e.g. \cite{Aniso-stekloff}). The standard transmission eigenvalue problems are non self-adjoint as well as non-linear where as these modified transmission eigenvalue problems are linear. Since the transmission eigenvalue problem is non self-adjoint this can give rise to complex eigenvalues (see for e.g. \cite{te-cbc2} and \cite{complexte}). This makes these problems interesting to study analytically and numerically. See \cite{te-maxwell-fem} for a numerical method for computing the classical transmission eigenvalues for the Maxwell's system. These eigenvalue problems have become an important area of research in inverse scattering theory. In general, these eigenvalues can  determined from the scattering data and can be used to determine information about the underline scattering object (see for e.g. \cite{zi-te}, \cite{TE-book}, \cite{gpinvtev},  \cite{cavityhcs}, \cite{armin}, and \cite{te-maxwell-results}). In \cite{te-homog} the transmission eigenvalues are used to estimate the material properties of a highly oscillatory periodic scatterer. This suggests that the transmission eigenvalues can be used in many applications as a target signature to non-destructively test materials for defects.

We will first study the interior transmission eigenvalue problem associated with the following electromagnetic scattering problem. Let $D \subset \R^3$,  be a  simply connected domains with smooth $C^2$ boundary $\partial D$ where $\nu$ is the outward unit normal.  Now, let  $N$ denote the matrix valued refractive index and $\eta$ be a boundary coating parameter matrix.  The total field $\E(x,d,p) =\E^i(x,d,p) + \E^s(x,d,p)$ for $x \in \R^3$ where the incidence direction $d$ is such that $|d|=1$ and $p$ is the polarization with $p \cdot d =0$. The incident field is given by 
$${\color{black} \E^i(x,d,p) =\frac{\text{i}}{k} \curl \left( \curl p\mathrm{e}^{\mathrm{i} k {x} \cdot d} \right)}$$
with $k>0$ being the wave number. The total and scattered fields in $H_{loc}(\text{curl}, \R^3)$ satisfying
\begin{align}
\curl\curl \E - k^2 \E=0  \,\, \textrm{ in }  \R^3 \setminus \overline{D}  \quad  \text{and} \quad \curl\curl \E - k^2 N\, \E=0  \,\, &\textrm{ in } {D}  \label{direct1} \\
{\color{black} (\E_{+}-\E_{-}) \times \nu =0  \,\, \,  \text{and} \,\,\, \big( \curl  (\E_{+}-\E_{-}) \big) \times \nu = \eta \,   \big(  \left( \nu \times \E_{+} \right) \times \nu  \big) }\,\, &\textrm{ on } \partial D \label{direct3}\\
\lim\limits_{|x| \rightarrow \infty} |x| \big(  \left(\curl \E^s \right) \times \hat{x} -\mathrm{i}k \E^s \big)=0. \label{src}
\end{align}
The radiation condition \eqref{src} is satisfied uniformly with respect to the direction $\hat{x}=x/|x|$. See \cite{LSM-maxwell-book} for the direct and inverse media scattering problem for determining $D$ using the Linear Sampling Method. For the case where {\color{black}$\eta$ is purely imaginary} the above problem represents scattering by an anisotropic media with a coated boundary given by the impedance boundary condition. Transmission eigenvalue problems are studied in the context of inverse scattering since they {\color{black} are related} to the wave numbers where the associated far-field operator fails to be injective with a dense range. Due to the loss of injective and density of the range one has that some qualitative reconstruction methods like the Linear Sampling Method fail at the corresponding wavenumber.  

The second transmission eigenvalue problem we consider is associated with the scalar scattering problem with a conductive boundary. We are interested mainly in the inverse scattering problem of determining information about the coefficients from the far-field data. Therefore, we assume that the support of the scatterer $D$ is known and we will investigate what information about the material properties can be obtained from the zero-index transmission eigenvalues. See \cite{fm-cbc} for the reconstruction of the scatterer from the  far-field data via the Factorization Method.  The  eigenvalue problem we consider here corresponds to the direct scattering problem: find the total field $u \in H^1_{loc}(\R^m)$ for $m=2,3$ such that  
\begin{align}
\Delta u +k^2 u=0  \quad \textrm{ in }  \R^m \setminus \overline{D}  \quad  \text{and} \quad  \Delta u +k^2 nu=0  \quad &\textrm{ in } \,  {D}  \label{direct4}\\
 u_+-u_-=0  \quad  \text{and} \quad {\partial_\nu u_+} + \eta u_+ = {\partial_\nu u_-} \quad &\textrm{ on } \partial D \label{direct5}
\end{align}
with $u=u^s+u^i$. The incident field is given by $u^i=\text{e}^{\text{i} k x \cdot d}$ with the incident direction $d$ given by a point on the unit circle/sphere. The scattered field $u^s$ satisfies the Sommerfeld radiation condition given by
$$\lim\limits_{|x| \rightarrow \infty} |x|^{(m-1)/2} \left( \frac{\partial u^s}{\partial |x|} -\text{i}k u^s \right)=0$$
which is satisfied uniformly with respect to the direction $\hat{x}=x/|x|$.
Let $D \subset \R^m$ be a bounded simply connected open set with $\nu$ the unit outward normal to $\partial D \in C^2$. Here, we let $n$ denote the refractive index and $\eta$ denotes the conductivity parameter on $\partial D$. The problem under consideration is to study the eigenvalue problem that one gets from artificially imbedding the scattering object $D$  in a background material where the refractive index is 1 on the exterior of $D$ and is 0 on the interior. This problem was studied in \cite{zi-te} with $\eta =0$ where the authors recovered the plate buckling eigenvalue problem. In our investigation, we will study the so-called zero-index transmission eigenvalue problem with both $n$ and $\eta$.

\section{Electromagnetic transmission eigenvalue problem}\label{sect-definition}

In this section, we will rigorously define the Electromagnetic transmission eigenvalue problem as well as develop the variational formulation in the appropriate function spaces. 
Here we define the Hilbert spaces ${\bf L}^2(D)=\big[L^2(D)\big]^3$ and ${\bf H}^1(D)=\big[H^1(D)\big]^3$ with the standard inner-products. We now define the Hilbert spaces $H(\text{curl} ,D) = \left\{ \vecu \in {\bf L}^2(D) \, : \, \curl \vecu \in {\bf L}^2(D)\right\} $
and $H_0(\text{curl} ,D) = \left\{ \vecu \in H(\text{curl} ,D)  \, : \,  \vecu \times \nu = 0 \, \text{ on } \, \partial D  \right\}$
with the inner-product 
$$(\vecu , \boldsymbol{\varphi})_{H(\text{curl} ,D)} =(\vecu , \boldsymbol{\varphi})_{{\bf L}^2(D)} +(\curl \vecu , \curl \boldsymbol{\varphi})_{{\bf L}^2(D)}$$
as well as $H(\text{curl$^2$} ,D) =\left\{ \vecu \in H(\text{curl} ,D)  \, : \,  \curl \vecu \in H(\text{curl} ,D)  \right\}$
equipped with the inner-product 
$$(\vecu , \boldsymbol{\varphi})_{H(\text{curl$^2$} ,D)} = (\vecu , \boldsymbol{\varphi})_{{\bf L}^2(D)} +(\curl \vecu , \curl \boldsymbol{\varphi})_{H(\text{curl} ,D)}.$$

The interior transmission eigenvalue problem under consideration is to determine the values of $k \in \C$ such that there exists a nontrivial solution to
\begin{align}
\curl\curl \vecw - k^2 N\, \vecw=0 \quad \text{and} \quad  \curl\curl\vecv - k^2 \vecv=0  \quad &\textrm{ in } \,  D \label{teprob1} \\
{\color{black} (\vecw-\vecv) \times \nu =0  \quad  \text{and} \quad  \big( \curl  (\vecw-\vecv) \big) \times \nu = \eta\, \big(  \left( \nu \times \vecw \right) \times \nu \big) }  \quad &\textrm{ on } \partial D.  \label{teprob2} 
\end{align} 
We say that $k$ is an {\it interior transmission eigenvalue} if there is a nontrivial pair of functions $(\vecw,\vecv) \in {\bf L}^2(D) \times {\bf L}^2(D)$ such that the difference $\vecw-\vecv \in X(D)$ where we define the Hilbert space 
$$X(D)=H(\text{curl$^2$} ,D) \cap H_0(\text{curl} ,D)$$ 
equipped with the $H(\text{curl$^2$} ,D)$ inner product. We assume $N(x) \in L^{\infty} (D,\R^{3 \times 3})$ and $\eta(x) \in L^{\infty} (\partial D,\R^{3 \times 3})$ are symmetric uniformly positive definite matrices. For the case where {\color{black} $\eta$ is purely imaginary} the above problem can be derived from the electromagnetic scattering by an inhomogeneous anisotropic medium coated with a highly conductive layer(see for e.g. \cite{CH-directproblem}). Here we assume that there are positive constants $n_{\text{min}}$ and $n_{\text{max}}$ such that 
$$ n_{\text{min}} |\xi|^2 \leq N(x)\xi \cdot \ov{\xi} \leq n_{\text{max}}  |\xi|^2  \quad \text{ for all } \, \xi \in \C^3 \quad \text{ a.e.} \, \, x \in \overline{D}.$$
Similarly for $\eta$ we assume that there are positive constants $\eta_{\text{min}}$ and $\eta_{\text{max}}$ such that 
$$ \eta_{\text{min}} |\xi|^2 \leq \eta(x)\xi \cdot \ov{\xi} \leq \eta_{\text{max}}  |\xi|^2  \quad \text{ for all } \, \xi \in \C^3 \quad \text{ a.e.} \, \, x \in \partial D.$$

In order to study this transmission eigenvalue problem we will follow the analytic framework in \cite{chtevexist} by considering the equivalent `quad-curl' formulation of the problem \eqref{teprob1}--\eqref{teprob2}. In \cite{chtevexist} the transmission eigenvalue problem for $\eta =0$ was studied and the corresponding interior transmission problem was studied in \cite{maxwellitp} where here we modify the analysis {\color{black}for our case.}

The eigenfunctions $\vecw$ and $\vecv$ solve \eqref{teprob1} in the distributional sense and now let $\vecu = \vecw-\vecv$ which gives that $\vecu \in X(D)$ satisfies 
$$ \curl \curl \vecu -k^2 \vecu = k^2 (N-I) \vecw \quad \text{ in } \,\, D$$ 
Where $I$ denotes the identity matrix. Provided that either $n_{\text{max}}<1$ or $n_{\text{min}}>1$ we have that $N-I$ is an invertable matrix a.e. in $\overline{D}$ which implies that
\begin{align}
\big(\curl \curl -k^2 N\big) (N-I)^{-1} \big( \curl \curl \vecu -k^2 \vecu \big) = 0 \quad \text{ in } \,\,  D \label{teprob3}
\end{align}
and the boundary condition \eqref{teprob2} becomes 
\begin{align}
{\color{black} k^2 ( \curl \vecu ) \times \nu = \eta\, \Big(\nu \times (N-I)^{-1} \big( \curl \curl \vecu -k^2 \vecu \big)  \Big)\times \nu } \quad \text{ on } \,\,  \partial D \label{teprob4}
\end{align}
where the equality in \eqref{teprob4} is understood in the trace sense. The eigenfunctions $\vecw$ and $\vecv$ can be determined from $\vecu$ through 
$$ k^2 \vecw =(N-I)^{-1} \big( \curl \curl \vecu -k^2 \vecu \big) \quad \text{and} \quad k^2 \vecv =(N-I)^{-1} \big( \curl \curl \vecu -k^2 N\vecu \big).$$ 
This implies that there exists nontrivial solutions to \eqref{teprob1}--\eqref{teprob2} if and only if there is a nontrivial solution to \eqref{teprob3}--\eqref{teprob4} giving the equivalent of the two eigenvalue problems. Therefore, we will analyze the variational formulation of \eqref{teprob3}--\eqref{teprob4} in the Hilbert space $X(D)$.   To do so, we need the following results for our variational space. 

\begin{theorem}\label{varspace}
The norms $\| \vecu \|^2_{H(\text{curl$\,^2$} ,D)}$ and $\|\vecu\|^2_{{\bf L}^2(D)} + \| \curl \curl \vecu \|^2_{{\bf L}^2(D)}$ are equivalent in $X(D)$. Also, For all $ \vecu \in X(D)$ we have that $\curl \vecu \in {\bf H}^1(D)$ and for all $\vecu \in X(D)$ satisfy the inequality $\| \curl \vecu\|^2_{{\bf H}^1(D)} \leq C \| \curl \curl \vecu \|^2_{{\bf L}^2(D)}.$
\end{theorem}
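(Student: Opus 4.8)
The plan is to establish the two claims in sequence, using well-known compact embedding and regularity results for $H_0(\mathrm{curl},D)$ on a smooth bounded domain. First I would prove the regularity statement $\curl\vecu\in{\bf H}^1(D)$ with the bound $\|\curl\vecu\|^2_{{\bf H}^1(D)}\le C\|\curl\curl\vecu\|^2_{{\bf L}^2(D)}$, and then deduce the norm equivalence from it.

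For the regularity claim, fix $\vecu\in X(D)$ and set $\vecq=\curl\vecu$. By definition of $X(D)$ we have $\vecq\in H(\mathrm{curl},D)$ and also $\operatorname{div}\vecq=\operatorname{div}\curl\vecu=0$ in $D$. Moreover $\vecu\in H_0(\mathrm{curl},D)$ means $\vecu\times\nu=0$ on $\partial D$, which (via the surface-divergence identity $\operatorname{div}_{\partial D}(\vecu\times\nu)=-\nu\cdot\curl\vecu$ for tangential fields, or equivalently integration by parts against gradients) forces $\vecq\cdot\nu=\curl\vecu\cdot\nu=0$ on $\partial D$. Thus $\vecq$ is a field in $H(\mathrm{curl},D)\cap H(\mathrm{div},D)$ with vanishing \emph{normal} trace and divergence-free; on a domain with $C^2$ boundary such fields belong to ${\bf H}^1(D)$ and satisfy the Friedrichs-type estimate
\begin{align*}
\|\vecq\|^2_{{\bf H}^1(D)}\le C\big(\|\curl\vecq\|^2_{{\bf L}^2(D)}+\|\operatorname{div}\vecq\|^2_{{\bf L}^2(D)}+\|\vecq\|^2_{{\bf L}^2(D)}\big)=C\big(\|\curl\curl\vecu\|^2_{{\bf L}^2(D)}+\|\curl\vecu\|^2_{{\bf L}^2(D)}\big).
\end{align*}
To absorb the lower-order term $\|\curl\vecu\|^2_{{\bf L}^2(D)}$ on the right, I would invoke a Poincaré–Friedrichs inequality for curl: since $\vecq\perp$ gradients (being divergence free with zero normal trace) one has $\|\vecq\|_{{\bf L}^2(D)}\le C\|\curl\vecq\|_{{\bf L}^2(D)}=C\|\curl\curl\vecu\|_{{\bf L}^2(D)}$, which yields the stated inequality $\|\curl\vecu\|^2_{{\bf H}^1(D)}\le C\|\curl\curl\vecu\|^2_{{\bf L}^2(D)}$.

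For the norm equivalence, one inequality is immediate: $\|\vecu\|^2_{{\bf L}^2(D)}+\|\curl\curl\vecu\|^2_{{\bf L}^2(D)}\le \|\vecu\|^2_{H(\mathrm{curl}^2,D)}$ since the right-hand norm has the extra nonnegative term $\|\curl\vecu\|^2_{{\bf L}^2(D)}$. For the reverse, it suffices to bound $\|\curl\vecu\|^2_{{\bf L}^2(D)}$ by the other two, and this is exactly the curl Poincaré inequality established above, $\|\curl\vecu\|_{{\bf L}^2(D)}\le C\|\curl\curl\vecu\|_{{\bf L}^2(D)}$; adding $\|\vecu\|^2_{{\bf L}^2(D)}+\|\curl\curl\vecu\|^2_{{\bf L}^2(D)}$ to $C^2\|\curl\curl\vecu\|^2_{{\bf L}^2(D)}$ gives $\|\vecu\|^2_{H(\mathrm{curl}^2,D)}\le C'\big(\|\vecu\|^2_{{\bf L}^2(D)}+\|\curl\curl\vecu\|^2_{{\bf L}^2(D)}\big)$, completing the equivalence.

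The main obstacle is the justification that $\curl\vecu$ has vanishing normal trace and that a divergence-free $H(\mathrm{curl})$-field with zero normal trace lies in ${\bf H}^1$ with the Friedrichs estimate: this is where the $C^2$ regularity and simple connectedness of $D$ enter, and it relies on the classical vector-analysis embedding theorems (e.g.\ Amrouche–Bernardi–Dauge–Girault, or the treatment in \cite{maxwellitp}); I would cite these rather than reprove them. The surface-identity step showing $\vecu\times\nu=0\Rightarrow\curl\vecu\cdot\nu=0$ should be stated carefully since $\curl\vecu\in H(\mathrm{curl},D)$ only \emph{a priori}, so the normal trace is initially understood in $H^{-1/2}(\partial D)$ and is shown to vanish by testing against smooth functions and using Green's formula.
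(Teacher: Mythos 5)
Your proposal is correct and follows essentially the same route as the paper: both arguments rest on observing that $\curl \vecu$ is divergence free with vanishing normal trace (via the surface identity applied to $\vecu \times \nu = 0$), invoking the embedding of $H(\text{curl},D)\cap H_0(\text{div},D)$ into ${\bf H}^1(D)$ from Amrouche--Bernardi--Dauge--Girault, and applying the Friedrichs inequality on the simply connected $C^2$ domain. The only (harmless) divergence is in the norm equivalence: the paper obtains it directly from Green's Theorem and Young's inequality, whereas you deduce the bound $\| \curl \vecu\|_{{\bf L}^2(D)} \leq C \| \curl\curl \vecu\|_{{\bf L}^2(D)}$ from the same Friedrichs-type estimate, which is a slightly stronger intermediate fact but equally valid.
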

\begin{proof}
The equivalence of the norms is a simple consequence of Green's Theorem and Young's Inequality. Now notice that $\grad \cdot ( \curl \vecu) = 0$ in $D$ and $(\curl \vecu) \cdot \nu = \grad_{\partial D} \cdot (\vecu \times \nu)=0$ on $\partial D$ in the weak sense where $\grad_{\partial D}$ is the surface gradient. This gives that $\curl \vecu \in H(\text{curl},D) \cap H_0(\text{div} ,D)$ which is continuously imbedded in  ${\bf H}^1(D)$ by \cite{vector-embedding}. Therefore, by applying the Friedrich's inequality given in Corollary 1 of \cite{maxwell-ineq} we have that  $\| \curl \vecu\|^2_{{\bf H}^1(D)} \leq C \| \curl \curl \vecu \|^2_{{\bf L}^2(D)}$
since $D$ is simple connected with $C^2$ boundary, proving the claim. 
\end{proof}

We now derive that variational formulation associated with \eqref{teprob3}--\eqref{teprob4}. In order to obtain the variational formulation multiple elementary vector identities are used involving the dot and cross product. Let $\boldsymbol{\varphi} \in X(D)$ then taking the dot product with \eqref{teprob3} and $\ov{\boldsymbol{\varphi}}$ then integrating over $D$ using Green's Theorem gives that
\begin{align*}
\int\limits_D \ov{\boldsymbol{\varphi}} \cdot (\curl \curl \F &- N k^2 \F) \, \dif x = \int\limits_D \F \cdot (\curl \curl \ov{\boldsymbol{\varphi}}  - N k^2 \ov{\boldsymbol{\varphi}} ) \, \dif x\\
                 				 		&+ \int\limits_{\partial D}  \ov{\boldsymbol{\varphi}} \cdot \nu \times ( \curl \F )  \, \dif s+\int\limits_{\partial D} \curl \ov{\boldsymbol{\varphi}} \cdot (  \nu \times \F )  \, \dif s
\end{align*}
where have let $\F =(N-I)^{-1} \big( \curl \curl \vecu -k^2 \vecu \big)$. For the fist boundary integral we have that 
$$\int\limits_{\partial D}  \ov{\boldsymbol{\varphi}} \cdot \big( \nu \times ( \curl \F ) \big) \, \dif s =\int\limits_{\partial D}  (\ov{\boldsymbol{\varphi}} \times \nu) \cdot  (\curl \F )  \, \dif s = 0$$
since $\boldsymbol{\varphi} \in X(D)$. Similarly for the second boundary integral we have that 
$$ {\color{black} \int\limits_{\partial D} (\curl \ov{\boldsymbol{\varphi}} ) \cdot (  \nu \times \F )  \, \dif s = \int\limits_{\partial D} \F \cdot \big( (\curl \ov{\boldsymbol{\varphi}}) \times \nu \big)   \, \dif s = \int\limits_{\partial D} \big( (\nu \times \F) \times \nu  \big) \cdot  \big((\curl \ov{\boldsymbol{\varphi}}) \times \nu \big)  \, \dif s }$$
where we have used that $\F = (\F \cdot \nu) \nu +(\nu \times \F) \times \nu$ as well as the fact that $(\curl \ov{\boldsymbol{\varphi}}) \times \nu$ and $(\F \cdot \nu) \nu$ are perpendicular with respect to the dot product. 
The variational form is therefore given by
\begin{align} \label{varform}
\int\limits_D (N-I)^{-1}(\curl \curl \vecu &- k^2 \vecu) \cdot ({\curl \curl \ov{\boldsymbol{\varphi}} } - k^2 N \ov{\boldsymbol{\varphi}}) \, \dif x \nonumber \\
&+  k^2 \int\limits_{\partial D}{\eta^{-1}} {\color{black} \big( ( \curl \vecu ) \times \nu \big)\cdot  \big( (\curl \ov{\boldsymbol{\varphi} }) \times \nu \big) } \, \dif s= 0  
\end{align}
for all $\boldsymbol{\varphi} \in X(D)$ where the boundary integral incorporates the boundary condition \eqref{teprob4}. Note that the boundary integral in \eqref{varform} is well defined since both $\curl \boldsymbol{\varphi}$ and $\curl \vecu$ are in  ${\bf H}^1(D)$ which implies that the trace of their components is in $H^{1/2}(\partial D) \subset L^2(\partial D)$. We have also used the fact that we have assumed $\eta$ is uniformly positive definite in $\partial D$ giving that $\eta^{-1} \in L^{\infty} (\partial D,\R^{3 \times 3})$ such that  
$$\frac{  |\xi|^2}{\eta_{\text{max}}} \leq \eta^{-1}(x) \xi \cdot \ov{\xi} \leq \frac{  |\xi|^2}{\eta_{\text{min}}}  \quad \text{ for all } \, \xi \in \C^3 \quad \text{ a.e.} \, \,  x \in \partial D.$$
Also, notice that since we assume either $n_{\text{max}}<1$ or $n_{\text{min}}>1$ we have that the matrix $(N-I)^{-1} \in  L^{\infty} (D,\R^{3 \times 3})$ such that 
$$ \frac{  |\xi|^2}{n_{\text{max}}-1} \leq \big(N(x)-I\big)^{-1} \xi \cdot \ov{\xi} \leq \frac{  |\xi|^2}{n_{\text{min}}-1}  \,\, \text{ for } \,\, n_{\text{min}}>1$$
or 
$$  \frac{  |\xi|^2}{1-n_{\text{min}} } \leq \big(I-N(x)\big)^{-1} \xi \cdot \ov{\xi} \leq \frac{  |\xi|^2}{1-n_{\text{max}} }\,\,  \text{ for } \,\, n_{\text{max}}<1$$
 for all $\xi \in \C^3$ and a.e. $x \in \overline{D}$. Therefore, the volume integral is well defined in the variational space. 

\subsection{Existence of electromagnetic transmission eigenvalues}\label{sect-existence}
In this section, we will prove the existence of infinitely many real transmission eigenvalues. To do so, we will apply the theory used for studying the case where $\eta=0$ where one only has the volume term in \eqref{varform}. In order to use the results in \cite{chtevexist} we need to split the variational formulation into coercive and compact parts. Throughout this section we assume that the coefficient matrices $N$ and $\eta$ are real and symmetric with either $n_{\text{max}}<1$ or $n_{\text{min}}>1$ and $\eta_{\text{min}} >0$. By manipulating the variational for \eqref{varform} one can show that $k$ is an interior transmission eigenvalue with the corresponding eigenfunction $\vecu \in X(D)$ provided that 
\begin{equation}
\mathcal{A}_k(\vecu,\boldsymbol{\varphi}) -k^2\mathcal{B}(\vecu,\boldsymbol{\varphi}) =0 \quad \text{ for all } \quad  \boldsymbol{\varphi} \in X(D), \quad \text{when} \, \, \, n_{\text{min}}>1,
\end{equation}
or 
\begin{equation}
\widetilde{ \mathcal{A}}_k(\vecu,\boldsymbol{\varphi}) -k^2 \widetilde{ \mathcal{B}} (\vecu,\boldsymbol{\varphi}) =0 \quad \text{ for all } \quad  \boldsymbol{\varphi} \in X(D), \quad \text{when} \, \, \, n_{\text{max}}<1.
\end{equation}
The sesquilinear forms on $X(D) \times X(D) \longmapsto \C$ are derived from manipulating the variational formulation \eqref{varform} and are given by
\begin{align}
 \mathcal{A}_k(\vecu,\boldsymbol{\varphi}) &=  \int\limits_D (N-I)^{-1} \big( \curl \curl \vecu -k^2 \vecu \big) \cdot  \big( \curl \curl  \overline{\boldsymbol{\varphi}} -k^2  \overline{\boldsymbol{\varphi}} \big)  \, \dif x \nonumber \\ 
& +k^4 \int\limits_D  \vecu \cdot  \overline{\boldsymbol{\varphi}} \, \dif x + k^2\int\limits_{\partial D} \eta^{-1} {\color{black} \big( ( \curl \vecu ) \times \nu \big)\cdot  \big( (\curl \ov{\boldsymbol{\varphi} }) \times \nu \big) ) \, \dif s}, \label{A1} 
\end{align}
\begin{align}
\widetilde{\mathcal{A}}_k (\vecu,\boldsymbol{\varphi})&= \int\limits_D N(I-N)^{-1} \big( \curl \curl \vecu -k^2 \vecu \big) \cdot  \big( \curl \curl  \overline{\boldsymbol{\varphi}} -k^2  \overline{\boldsymbol{\varphi}} \big)  \, \dif x \nonumber \\ 
&\hspace{1in} + \int\limits_D {\color{black} \big( \curl ( \curl \vecu ) \big)\cdot  \big(\curl (\curl \overline{\boldsymbol{\varphi}}) \big)} \, \dif x, \label{A2} \\
\mathcal{B}(\vecu,\boldsymbol{\varphi})&=\int\limits_D  {\color{black} (\curl \vecu ) \cdot (\curl  \overline{\boldsymbol{\varphi}} ) } \, \dif x,  \label{B1} 
\end{align}
and 
\begin{align}
 \widetilde{\mathcal{B}}(\vecu,\boldsymbol{\varphi}) &= \int\limits_D \curl \vecu \cdot \curl  \overline{\boldsymbol{\varphi}}  \, \dif x + \int\limits_{\partial D} \eta^{-1} {\color{black} \big( ( \curl \vecu ) \times \nu \big)\cdot  \big( (\curl \ov{\boldsymbol{\varphi} }) \times \nu \big)}  \, \dif s.  \label{B2}
\end{align}

Since we have that $\eta^{-1} \in L^{\infty} (\partial D,\R^{3 \times 3})$ and $(N-I)^{-1} \in  L^{\infty} (D,\R^{3 \times 3})$ we that the sesquilinear forms are bounded. By employing the Riesz representation theorem we can define the bounded linear operators $\mathbb{A}_k$, $\widetilde{\mathbb{A}}_k$, $\mathbb{B}$, and $\widetilde{ \mathbb{B}}:X(D) \longmapsto X(D)$ that representation the sesquilinear forms where
\begin{align*}
\left( {\mathbb{A}}_k \vecu,\boldsymbol{\varphi} \right)_{X(D)}={ \mathcal{A}}_k(\vecu,\boldsymbol{\varphi}),  \quad \big( \widetilde{\mathbb{A}}_k \vecu,\boldsymbol{\varphi} \big)_{XD)}=\widetilde{ \mathcal{A}}_k(u,\varphi), 
\end{align*}
\begin{align*}
\left( {\mathbb{B}} \vecu,\boldsymbol{\varphi}\right)_{X(D)}={ \mathcal{B}}(\vecu,\boldsymbol{\varphi})  \quad \text{ and } \quad \big( \widetilde{\mathbb{B}}\vecu,\boldsymbol{\varphi} \big)_{X(D)}=\widetilde{ \mathcal{B}}(\vecu,\boldsymbol{\varphi})
\end{align*}
for all $ \vecu $ and $\boldsymbol{\varphi} \in X(D)$. It is clear from the definition for operators that the mappings $k \longmapsto \mathbb{A}_k$ and  $k \longmapsto \widetilde{\mathbb{A}}_k$ from the complex plane to the set of bounded linear operators on $X(D)$ is analytic.

In order to prove the existence of real transmission eigenvalues we will use Theorem 2.3 of \cite{chtevexist}. Therefore, we first show that $\mathbb{B}$ and $\widetilde{ \mathbb{B}}$ are compact. By the compact imbedding of ${\bf H}^1(D)$ into ${\bf L}^2(D)$ and Theorem \ref{varspace} we clearly have the compactness of operator $\mathbb{B}$. Also by Theorem \ref{varspace} we have that for all $\vecu \in X(D)$ that the  trace of $\curl \vecu$ on the boundary $\partial D$ has components in $H^{1/2}(\partial D)$. Using the inequality 
$$\int\limits_{\partial D} \eta^{-1} \big|( \curl \vecu ) \times \nu \big|^2 \, \dif s \leq \eta^{-1}_{\text{min}}\int\limits_{\partial D}  \big|\curl \vecu \big|^2 \, \dif s$$ 
and the compact imbedding of  $H^{1/2}(\partial D)$ into $L^2(\partial D)$ gives that both the volume and boundary terms in $\widetilde{ \mathbb{B}}$ can be represented by compact operators. Since $\eta$ is a real symmetric matrix it is clear that both $\mathbb{B}$ and $\widetilde{ \mathbb{B}}$ are self-adjoint operators since the sesquilinear forms ${ \mathcal{B}}( \cdot \, ,\cdot)$ and $\widetilde{ \mathcal{B}}( \cdot \, ,\cdot)$ are Hermitian. It is also clear by the definition that $\mathbb{B}$ and $\widetilde{ \mathbb{B}}$ are non-negative operators since $\eta$ is a uniformly positive definite matrix. This gives the following result. 
\begin{theorem}\label{Bforms}
{\color{black} Assume that  $\eta(x) \in L^{\infty} (\partial D,\R^{3 \times 3})$ is uniformly positive definite. Then the operators $\mathbb{B}$ and $\widetilde{ \mathbb{B}} :X(D) \longmapsto X(D)$ are self-adjoint, non-negative and compact. }
\end{theorem}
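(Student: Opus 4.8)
The plan is to read self-adjointness and non-negativity straight off the fact that $\mathcal{B}$ and $\widetilde{\mathcal{B}}$ are Hermitian, non-negative sesquilinear forms, and to obtain compactness by writing each representing operator as $J^{\ast}J$ with a suitable \emph{compact} $J$ built from the embeddings supplied by Theorem \ref{varspace}.

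First I would settle self-adjointness and non-negativity. A bounded operator on a Hilbert space is self-adjoint precisely when its associated sesquilinear form is Hermitian; since $\eta$ is real symmetric and uniformly positive definite, $\eta^{-1}\in L^{\infty}(\partial D,\R^{3\times3})$ is also real symmetric, so both \eqref{B1} and \eqref{B2} satisfy $\ov{\mathcal{B}(\boldsymbol{\varphi},\vecu)}=\mathcal{B}(\vecu,\boldsymbol{\varphi})$ and $\ov{\widetilde{\mathcal{B}}(\boldsymbol{\varphi},\vecu)}=\widetilde{\mathcal{B}}(\vecu,\boldsymbol{\varphi})$, whence $(\mathbb{B}\vecu,\boldsymbol{\varphi})_{X(D)}=(\vecu,\mathbb{B}\boldsymbol{\varphi})_{X(D)}$ and likewise for $\widetilde{\mathbb{B}}$. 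Non-negativity follows by testing on the diagonal: $(\mathbb{B}\vecu,\vecu)_{X(D)}=\| \curl\vecu\|^2_{{\bf L}^2(D)}\ge 0$, and $(\widetilde{\mathbb{B}}\vecu,\vecu)_{X(D)}=\|\curl\vecu\|^2_{{\bf L}^2(D)}+\int_{\partial D}\eta^{-1}\big((\curl\vecu)\times\nu\big)\cdot\ov{\big((\curl\vecu)\times\nu\big)}\,\dif s\ge 0$, the boundary integrand being pointwise non-negative because $\eta^{-1}$ is uniformly positive definite a.e.\ on $\partial D$.

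The substantive step is compactness. By Theorem \ref{varspace} the map $\vecu\mapsto\curl\vecu$ sends $X(D)$ boundedly into ${\bf H}^1(D)$, so composing with the Rellich compact embedding ${\bf H}^1(D)\hookrightarrow{\bf L}^2(D)$ shows that $J_1:\vecu\mapsto\curl\vecu$ is compact from $X(D)$ into ${\bf L}^2(D)$; since $\mathcal{B}(\vecu,\boldsymbol{\varphi})=(J_1\vecu,J_1\boldsymbol{\varphi})_{{\bf L}^2(D)}$ this gives $\mathbb{B}=J_1^{\ast}J_1$, hence compact. For $\widetilde{\mathbb{B}}$ I would additionally use that Theorem \ref{varspace} puts each component of $\curl\vecu$ on $\partial D$ into $H^{1/2}(\partial D)$ with norm controlled by $\|\vecu\|_{X(D)}$; composing the bounded trace map $X(D)\to{\bf H}^{1/2}(\partial D)$, $\vecu\mapsto(\curl\vecu)|_{\partial D}$, with the compact embedding ${\bf H}^{1/2}(\partial D)\hookrightarrow{\bf L}^2(\partial D)$ and with the bounded pointwise operation $\mathbf{f}\mapsto\eta^{-1/2}(\mathbf{f}\times\nu)$ (using the symmetric positive square root $\eta^{-1/2}\in L^{\infty}(\partial D,\R^{3\times3})$) produces a compact $J_2:X(D)\to{\bf L}^2(\partial D)$ with $\int_{\partial D}\eta^{-1}\big((\curl\vecu)\times\nu\big)\cdot\ov{\big((\curl\boldsymbol{\varphi})\times\nu\big)}\,\dif s=(J_2\vecu,J_2\boldsymbol{\varphi})_{{\bf L}^2(\partial D)}$. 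Then $\widetilde{\mathbb{B}}=J_1^{\ast}J_1+J_2^{\ast}J_2$ is a sum of two compact operators, hence compact.

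The only delicate point I anticipate is bookkeeping around Theorem \ref{varspace}: one must check that the norm equivalence there, together with $\|\curl\vecu\|^2_{{\bf H}^1(D)}\le C\|\curl\curl\vecu\|^2_{{\bf L}^2(D)}$, genuinely bounds both $\|\curl\vecu\|_{{\bf H}^1(D)}$ and the surface trace of $\curl\vecu$ by a constant times $\|\vecu\|_{X(D)}$, and that $\eta^{-1}$ (hence $\eta^{-1/2}$) really is a bounded measurable matrix field so the factorizations are legitimate; everything else is the standard $J^{\ast}J$ argument. If one prefers to avoid the square root, the conclusion follows directly: for a bounded sequence $(\vecu_n)\subset X(D)$, pass to a subsequence along which $\curl\vecu_n$ converges in ${\bf L}^2(D)$ and its trace converges in ${\bf L}^2(\partial D)$, and then use $\|\widetilde{\mathbb{B}}\vecu_n-\widetilde{\mathbb{B}}\vecu_m\|_{X(D)}=\sup_{\|\boldsymbol{\varphi}\|_{X(D)}\le1}|\widetilde{\mathcal{B}}(\vecu_n-\vecu_m,\boldsymbol{\varphi})|\le\|\curl(\vecu_n-\vecu_m)\|_{{\bf L}^2(D)}+C\eta_{\text{min}}^{-1}\|\curl(\vecu_n-\vecu_m)\|_{{\bf L}^2(\partial D)}$ to conclude the images are Cauchy in $X(D)$ (the analogous, simpler estimate handles $\mathbb{B}$).
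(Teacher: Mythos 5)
Your proposal is correct and follows essentially the same route as the paper: self-adjointness and non-negativity from the Hermitian, non-negative forms (using that $\eta$ is real symmetric and uniformly positive definite), and compactness from Theorem \ref{varspace} combined with the compact embeddings ${\bf H}^1(D)\hookrightarrow{\bf L}^2(D)$ and $H^{1/2}(\partial D)\hookrightarrow L^2(\partial D)$ applied to the volume and boundary terms separately. Your $J^{\ast}J$ factorization merely makes explicit what the paper states as "both the volume and boundary terms in $\widetilde{\mathbb{B}}$ can be represented by compact operators."
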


We now turn our attention to studying the operators $\mathbb{A}_k$ and $\widetilde{\mathbb{A}}_k$. In order to apply Theorem 2.3 of \cite{chtevexist} to our transmission eigenvalue problem we need to show that the operators $\mathbb{A}_k$ and $\widetilde{\mathbb{A}}_k$ are self-adjoint and coercive for all positive values of $k$. To do so, we first notice that by Theorem \ref{varspace} we can take 
$$\|\vecu\|^2_{X(D)}=\|\vecu\|^2_{{\bf L}^2(D)} + \| \curl \curl \vecu \|^2_{{\bf L}^2(D)}.$$
With this we are now ready to study the operators $\mathbb{A}_k$ and $\widetilde{\mathbb{A}}_k$. 
\begin{theorem}\label{Aforms}
{\color{black} Assume that $N(x) \in L^{\infty} (D,\R^{3 \times 3})$ and $\eta(x) \in L^{\infty} (\partial D,\R^{3 \times 3})$ are symmetric uniformly positive definite.  Then the operators $\mathbb{A}_k$ and $\widetilde{ \mathbb{A}}_k :X(D) \longmapsto X(D)$ are self-adjoint for all $k \in \R$. Moreover, assume that either $n_{\text{min}}>1$ or $n_{\text{max}}<1$. Then the operators are coercive for all $k \in \R \setminus \{0\}$ satisfying the estimates }
$$\left( {\mathbb{A}}_k \vecu,\vecu \right)_{X(D)} \geq \frac{1}{n_{\text{max}} +1}  \| \curl \curl \vecu \|^2_{{\bf L}^2(D)} + \frac{k^4}{2} \|\vecu\|^2_{{\bf L}^2(D)} \quad \text{ for all } \quad  \vecu \in X(D)$$
and 
$$\big( \widetilde{\mathbb{A}}_k \vecu,\vecu \big)_{XD)} \geq \frac{1}{2}  \| \curl \curl \vecu \|^2_{{\bf L}^2(D)} + {k^4} \frac{n_{\text{min}} }{n_{\text{min}} +1} \|\vecu\|^2_{{\bf L}^2(D)} \quad \text{ for all } \quad  \vecu \in X(D)$$
\end{theorem}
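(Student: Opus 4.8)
The plan is to treat the two assertions — self-adjointness for every real $k$, and coercivity for every nonzero real $k$ with the displayed constants — separately, and in each case to reduce matters to the norm identity $\|\vecu\|_{X(D)}^2 = \|\vecu\|_{{\bf L}^2(D)}^2 + \|\curl\curl\vecu\|_{{\bf L}^2(D)}^2$ furnished by Theorem \ref{varspace}.

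For self-adjointness I would simply observe that for real $k$ the sesquilinear forms $\mathcal{A}_k$ and $\widetilde{\mathcal{A}}_k$ are Hermitian. Indeed, the coefficient matrices $(N-I)^{-1}$, $N(I-N)^{-1}$ and $\eta^{-1}$ are real and symmetric, so for real $k$ every term in \eqref{A1}--\eqref{A2} has the shape $\int M\,\xi(\vecu)\cdot\ov{\xi(\boldsymbol{\varphi})}$ with $M$ real symmetric and $\xi(\cdot)$ a linear map (or a boundary-trace composed with a tangential projection); conjugating such a term and swapping $\vecu$ and $\boldsymbol{\varphi}$ leaves it unchanged since $M A\cdot\ov B = M\ov B\cdot A$ for real symmetric $M$. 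Hence $\mathcal{A}_k(\vecu,\boldsymbol{\varphi}) = \ov{\mathcal{A}_k(\boldsymbol{\varphi},\vecu)}$, and via the defining relation $(\mathbb{A}_k\vecu,\boldsymbol{\varphi})_{X(D)} = \mathcal{A}_k(\vecu,\boldsymbol{\varphi})$ this is precisely $(\mathbb{A}_k\vecu,\boldsymbol{\varphi})_{X(D)} = (\vecu,\mathbb{A}_k\boldsymbol{\varphi})_{X(D)}$; the same argument gives self-adjointness of $\widetilde{\mathbb{A}}_k$.

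For coercivity of $\mathbb{A}_k$ (case $n_{\text{min}}>1$) I would put $\boldsymbol{\varphi}=\vecu$ in \eqref{A1}, discard the nonnegative boundary term (legitimate for real $k$ since $\eta^{-1}$ is uniformly positive definite), and use $(N-I)^{-1}\xi\cdot\ov{\xi}\ge (n_{\text{max}}-1)^{-1}|\xi|^2$ to obtain
$$\big(\mathbb{A}_k\vecu,\vecu\big)_{X(D)} \ge \frac{1}{n_{\text{max}}-1}\big\|\curl\curl\vecu - k^2\vecu\big\|_{{\bf L}^2(D)}^2 + k^4\|\vecu\|_{{\bf L}^2(D)}^2.$$
Expanding $\|\curl\curl\vecu - k^2\vecu\|^2 = \|\curl\curl\vecu\|^2 - 2k^2\,\mathrm{Re}(\curl\curl\vecu,\vecu)_{{\bf L}^2(D)} + k^4\|\vecu\|^2$ and estimating the cross term by a \emph{weighted} Young inequality whose weight is chosen so that what remains is a genuine completion of squares collapses the right-hand side to $\frac{1}{n_{\text{max}}+1}\|\curl\curl\vecu\|^2 + \frac{k^4}{2}\|\vecu\|^2$, which is the claimed bound; operator coercivity on $X(D)$ then follows from Theorem \ref{varspace} with constant $\min\{(n_{\text{max}}+1)^{-1},\,k^4/2\}>0$ whenever $k\ne 0$. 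The case of $\widetilde{\mathbb{A}}_k$ ($n_{\text{max}}<1$) is identical in spirit: with $\boldsymbol{\varphi}=\vecu$ in \eqref{A2} one uses $N(I-N)^{-1}\xi\cdot\ov{\xi}\ge \frac{n_{\text{min}}}{1-n_{\text{min}}}|\xi|^2$, combines this with the full term $\|\curl\curl\vecu\|^2$ (the identity $1+\frac{n_{\text{min}}}{1-n_{\text{min}}}=\frac{1}{1-n_{\text{min}}}$ is what makes the leading coefficient come out cleanly), and again completes the square in the $k^2$-cross term to reach $\frac12\|\curl\curl\vecu\|^2 + k^4\frac{n_{\text{min}}}{n_{\text{min}}+1}\|\vecu\|^2$.

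The only delicate point is the bookkeeping in the two coercivity estimates: the Young weights must be picked so that the algebra reproduces exactly the constants $\frac{1}{n_{\text{max}}+1}$, $\frac12$ and $\frac{n_{\text{min}}}{n_{\text{min}}+1}$, i.e. one must use a tight completion of squares rather than a generic $2ab\le\varepsilon a^2+\varepsilon^{-1}b^2$ with an unspecified $\varepsilon$. Everything else is routine — boundedness of the forms (hence that $\mathbb{A}_k$, $\widetilde{\mathbb{A}}_k$ are well-defined bounded operators) was already recorded before the statement, and the passage from the quadratic estimates to operator coercivity is immediate from the norm equivalence in Theorem \ref{varspace}.
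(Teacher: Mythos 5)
Your proposal is correct and follows essentially the same route as the paper: self-adjointness from the Hermitian character of the forms for real $k$, then dropping the nonnegative boundary term, bounding the matrix coefficient below by $(n_{\text{max}}-1)^{-1}$ (resp.\ $n_{\text{min}}/(1-n_{\text{min}})$), and applying a weighted Young inequality to the cross term — the paper's choice $\ep=\alpha+1/2$ (resp.\ $\ep=\beta+1/2$) is exactly the ``tight'' weight you describe, yielding $\alpha-\alpha^2/\ep=1/(n_{\text{max}}+1)$ and $\beta-\beta^2/\ep=n_{\text{min}}/(n_{\text{min}}+1)$.
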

\begin{proof}
To begin, we notice that the sesquilinear forms ${ \mathcal{A}}_k( \cdot \, ,\cdot)$ and $\widetilde{ \mathcal{A}}_k( \cdot \, ,\cdot)$ are Hermitian for all $k \in \R$ which implies that $\mathbb{A}_k$ and $\widetilde{\mathbb{A}}_k$ are self-adjoint. Now we prove the coercivity estimates. Therefore, we begin with $\mathbb{A}_k$ and by definition we have that 
 \begin{align*}
\left( {\mathbb{A}}_k \vecu, \vecu \right)_{X(D)} &\geq \int\limits_D (N-I)^{-1} \big| \curl \curl \vecu -k^2 \vecu \big|^2  \, \dif x + k^4 \int\limits_D  |\vecu|^2\, \dif x\\
&\hspace{-0.5in} \geq \alpha \int\limits_D \big| \curl \curl \vecu -k^2 \vecu \big|^2  \, \dif x + k^4 \int\limits_D  |\vecu|^2\, \dif x \, \text{ where } \, \alpha = \frac{  1 }{n_{\text{max}} - 1}. 
\end{align*}
Following in the same way as in \cite{CH-itp} {\color{black}by appealing to the Cauchy-Schwarz inequality and Young's inequality we have that
\begin{align*}
\left( {\mathbb{A}}_k \vecu, \vecu \right)_{X(D)} & \geq \alpha \| \curl \curl \vecu \|^2_{{\bf L}^2(D)} -2\alpha k^2 \| \curl \curl \vecu \|_{{\bf L}^2(D)} \|  \vecu \|_{{\bf L}^2(D)}+ (\alpha+1)k^4 \|  \vecu \|^2_{{\bf L}^2(D)}\\
&\geq \left(\alpha -\frac{\alpha^2}{\ep} \right) \| \curl \curl \vecu \|^2_{{\bf L}^2(D)} + (\alpha +1 - \ep) k^4\|  \vecu \|^2_{{\bf L}^2(D)}  
\end{align*}
provided that} $\alpha < \ep <\alpha + 1$. We choose $\ep = \alpha + 1/2$ which gives the coercivity estimate for $ {\mathbb{A}}_k$. Now we consider $\widetilde{ \mathbb{A}}_k$ and similarly by definition we have that 
\begin{align*}
\big(\widetilde{\mathbb{A}}_k \vecu, \vecu \big)_{X(D)} &= \int\limits_D N(I-N)^{-1} \big| \curl \curl \vecu -k^2 \vecu \big|^2 \, \dif x + \int\limits_D  \left|\curl \curl \vecu \right|^2 \, \dif x\\
&\hspace{-0.5in} \geq \beta \int\limits_D \big| \curl \curl \vecu -k^2 \vecu \big|^2 \, \dif x + \int\limits_D  \left|\curl \curl \vecu \right|^2 \, \dif x \, \text{ where } \, \beta = \frac{ n_{\text{min}}  }{1-n_{\text{min}} }.
\end{align*}
Similarly as above 
\begin{align*}
\big(\widetilde{\mathbb{A}}_k \vecu, \vecu \big)_{X(D)} &\geq  (\beta +1 - \ep) \| \curl \curl \vecu \|^2_{{\bf L}^2(D)} +  \left(\beta -\frac{\beta^2}{\ep} \right) k^4\|  \vecu \|^2_{{\bf L}^2(D)}  
\end{align*}
for any $\beta < \ep <\beta + 1$. We choose $\ep = \beta + 1/2$ which proves the claim. 
\end{proof}

Theorems \ref{Bforms} and \ref{Aforms} gives that the operators $\mathbb{B}$, $\widetilde{ \mathbb{B}}$, $\mathbb{A}_k$ and  $\widetilde{\mathbb{A}}_k$ satisfy the assumptions needed to apply Theorem 2.3 of \cite{chtevexist} to prove the existence of transmission eigenvalues. In order to apply this result we need to show that the operators $\mathbb{A}_k -k^2 \mathbb{B}$ and $\widetilde{\mathbb{A}}_k -k^2 \widetilde{\mathbb{B}}$ are positive on $X(D)$ for some $k_1$ and are non-positive on a subspace of $X(D)$ for some $k_2$. We note that the real transmission eigenvalues are solutions to the equation $\lambda_j (k)-k^2=0$ where $\lambda_j$ are the generalized eigenvalues such that there exists a nontrivial $\vecu \in X(D)$ where 
\begin{align}
\mathbb{A}_k \vecu= \lambda_j \mathbb{B} \vecu \,\, \text{ for } \, \, 1<n_{\text{min}} \quad \text{ or } \quad \widetilde{\mathbb{A}}_k \vecu= \lambda_j \widetilde{\mathbb{B}} \vecu  \,\, \text{ for } \, \, n_{\text{max}}<1. \label{geneig} 
\end{align}
Since the ${ \mathcal{A}}_k( \cdot \, ,\cdot)$ and $\widetilde{ \mathcal{A}}_k( \cdot \, ,\cdot)$ depend continuously on $k$ the existence of real transmission eigenvalues comes from appealing to the Intermediate Value Theorem. We now show that both $\mathbb{A}_k -k^2 \mathbb{B}$ and $\widetilde{\mathbb{A}}_k -k^2 \widetilde{\mathbb{B}}$ are positive operators for sufficiently small values of $k>0$. 
\begin{theorem}\label{positive}
{\color{black} Assume that $N(x) \in L^{\infty} (D,\R^{3 \times 3})$ and $\eta(x) \in L^{\infty} (\partial D,\R^{3 \times 3})$ are symmetric uniformly positive definite and that either $n_{\text{min}}>1$ or $n_{\text{max}}<1$. Then for all $k>0$ sufficiently small there exists $\delta >0$ such that for all $\vecu \in X(D)$ }
$$\mathcal{A}_k(\vecu,\vecu) -k^2\mathcal{B}(\vecu,\vecu) \geq \delta \| \vecu \|^2_{X(D)}   \quad \text{ or  } \quad \widetilde{ \mathcal{A}}_k(\vecu,\vecu) -k^2 \widetilde{ \mathcal{B}} (\vecu,\vecu)  \geq \delta \| \vecu \|^2_{X(D)}.$$
\end{theorem}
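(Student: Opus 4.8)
The plan is to derive both positivity estimates directly from the coercivity bounds for $\mathbb{A}_k$ and $\widetilde{\mathbb{A}}_k$ established in Theorem \ref{Aforms}, using Theorem \ref{varspace} to absorb the entire $\mathcal{B}$-term (respectively $\widetilde{\mathcal{B}}$-term) into the leading $\| \curl \curl \vecu \|^2_{{\bf L}^2(D)}$ contribution once $k$ is taken small. Throughout I work with the norm $\|\vecu\|^2_{X(D)}=\|\vecu\|^2_{{\bf L}^2(D)}+\|\curl\curl\vecu\|^2_{{\bf L}^2(D)}$, which is legitimate by Theorem \ref{varspace}.

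First I would treat the case $n_{\text{min}}>1$. By Theorem \ref{varspace} there is a constant $C>0$, depending only on $D$, with $\|\curl\vecu\|^2_{{\bf L}^2(D)}\le\|\curl\vecu\|^2_{{\bf H}^1(D)}\le C\|\curl\curl\vecu\|^2_{{\bf L}^2(D)}$ for all $\vecu\in X(D)$, hence $\mathcal{B}(\vecu,\vecu)=\|\curl\vecu\|^2_{{\bf L}^2(D)}\le C\|\curl\curl\vecu\|^2_{{\bf L}^2(D)}$. Combining with the estimate for $\mathbb{A}_k$ from Theorem \ref{Aforms} gives
\begin{align*}
\mathcal{A}_k(\vecu,\vecu)-k^2\mathcal{B}(\vecu,\vecu)\;\ge\;\Big(\tfrac{1}{n_{\text{max}}+1}-Ck^2\Big)\|\curl\curl\vecu\|^2_{{\bf L}^2(D)}+\tfrac{k^4}{2}\|\vecu\|^2_{{\bf L}^2(D)}.
\end{align*}
For $k>0$ small enough that $Ck^2\le\tfrac{1}{2(n_{\text{max}}+1)}$, the bracketed coefficient is at least $\tfrac{1}{2(n_{\text{max}}+1)}$, so the right-hand side is bounded below by $\delta\|\vecu\|^2_{X(D)}$ with $\delta=\min\{\tfrac{1}{2(n_{\text{max}}+1)},\tfrac{k^4}{2}\}>0$, which is the claimed inequality.

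For the case $n_{\text{max}}<1$ the only new ingredient is the boundary term in $\widetilde{\mathcal{B}}$. Using $\eta^{-1}\le\eta_{\text{min}}^{-1}I$, the trace embedding ${\bf H}^1(D)\hookrightarrow {\bf H}^{1/2}(\partial D)\subset{\bf L}^2(\partial D)$, and again Theorem \ref{varspace}, I would bound
\begin{align*}
\int\limits_{\partial D}\eta^{-1}\big|(\curl\vecu)\times\nu\big|^2\,\dif s\;\le\;\eta_{\text{min}}^{-1}\|\curl\vecu\|^2_{{\bf L}^2(\partial D)}\;\le\;C'\|\curl\curl\vecu\|^2_{{\bf L}^2(D)},
\end{align*}
so that $\widetilde{\mathcal{B}}(\vecu,\vecu)\le\widetilde{C}\|\curl\curl\vecu\|^2_{{\bf L}^2(D)}$. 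Inserting the estimate for $\widetilde{\mathbb{A}}_k$ from Theorem \ref{Aforms} yields, for small $k$,
\begin{align*}
\widetilde{\mathcal{A}}_k(\vecu,\vecu)-k^2\widetilde{\mathcal{B}}(\vecu,\vecu)\;\ge\;\Big(\tfrac12-\widetilde{C}k^2\Big)\|\curl\curl\vecu\|^2_{{\bf L}^2(D)}+k^4\tfrac{n_{\text{min}}}{n_{\text{min}}+1}\|\vecu\|^2_{{\bf L}^2(D)}\;\ge\;\delta\|\vecu\|^2_{X(D)}
\end{align*}
with $\delta=\min\{\tfrac14,\,k^4\tfrac{n_{\text{min}}}{n_{\text{min}}+1}\}>0$ once $\widetilde{C}k^2\le\tfrac14$.

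I do not expect a serious obstacle here; the argument is a perturbation of the coercivity already proved. The one point to be careful about is that the lower bound on $\delta$ inevitably degenerates like $k^4$ as $k\to0$, since the $\|\vecu\|^2_{{\bf L}^2(D)}$ part of the $X(D)$-norm is only controlled by the $k^4$ term coming from $\mathcal{A}_k$; this is harmless because the statement only asks for positivity at each fixed sufficiently small $k$, not a uniform bound. A secondary bookkeeping point is to make explicit that the constants $C$, $C'$ (hence $\widetilde{C}$) depend only on $D$ and on $\eta_{\text{min}}$, so that the threshold on $k$ and the resulting $\delta$ are well defined.
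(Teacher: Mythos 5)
Your proposal is correct and follows essentially the same route as the paper: both cases are obtained by combining the coercivity estimates of Theorem \ref{Aforms} with the Friedrich's inequality from Theorem \ref{varspace} (plus the trace estimate for the boundary term in $\widetilde{\mathcal{B}}$) to absorb the $k^2\mathcal{B}$, respectively $k^2\widetilde{\mathcal{B}}$, contribution into the $\|\curl\curl\vecu\|^2_{{\bf L}^2(D)}$ term for small $k$. Your explicit choice of $\delta$ and the remark that it degenerates like $k^4$ are consistent with, and slightly more detailed than, the paper's argument.
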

\begin{proof}
To begin, we start with the simpler case when $n_{\text{min}}>1$ so we use Theorem \ref{Aforms} to estimate 
\begin{align*}
\mathcal{A}_k(\vecu,\vecu) -k^2\mathcal{B}(\vecu,\vecu) &\geq \frac{1}{n_{\text{max}} +1}   \| \curl \curl \vecu \|^2_{{\bf L}^2(D)} + \frac{k^4}{2} \|\vecu\|^2_{{\bf L}^2(D)} - k^2 \| \curl \vecu\|^2_{{\bf L}^2(D)}\\
										      &\geq \left( \frac{1}{n_{\text{max}} +1}   -Ck^2 \right) \| \curl \curl \vecu \|^2_{{\bf L}^2(D)} + \frac{k^4}{2} \|\vecu\|^2_{{\bf L}^2(D)} 
\end{align*}
where $C>0$ is the constant from Friedrich's inequality in Theorem \ref{varspace} which gives result for this case. Now consider the case where $n_{\text{max}}<1$ where we again use Theorem \ref{Aforms} and Friedrich's inequality to obtain the estimate
\begin{align*}
\widetilde{ \mathcal{A}}_k(\vecu,\vecu) -k^2 \widetilde{ \mathcal{B}} (\vecu,\vecu) &\\
&\hspace{-1in}\geq \left( \frac{1}{2}   -Ck^2 \right) \| \curl \curl \vecu \|^2_{{\bf L}^2(D)} +{k^4} \frac{n_{\text{min}} }{n_{\text{min}} +1}\|\vecu\|^2_{{\bf L}^2(D)} -\frac{k^2}{\eta_{\text{min}} } \|  \curl \vecu \|^2_{{\bf L}^2(\partial D)} \\
 &\hspace{-1in}\geq \left[\frac{1}{2}  -C k^2 \left(1+\frac{c}{\eta_{\text{min}}} \right) \right] \| \curl \curl \vecu \|^2_{{\bf L}^2(D)} + {k^4} \frac{n_{\text{min}} }{n_{\text{min}} +1} \|\vecu\|^2_{{\bf L}^2(D)}
\end{align*}
where $c>0$ is the constant for the Trace Theorem such that $\|\boldsymbol{\varphi} \|^2_{{\bf L}^2(\partial D)} \leq c \|\boldsymbol{\varphi} \|^2_{{\bf H}^1(D)}$ for all $\boldsymbol{\varphi} \in {\bf H}^1(D)$  and again $C>0$ is the constant from Friedrich's inequality, proving the claim. 
\end{proof}

We are now ready to prove the main result of this section i.e. there exists infinitely many real transmission eigenvalues. To do so, we must show that there is an infinite dimensional subset of $X(D)$ and some value $k>0$ for which that operators $\mathbb{A}_k -k^2 \mathbb{B}$ and $\widetilde{\mathbb{A}}_k -k^2 \widetilde{\mathbb{B}}$ are non-positive then Theorem 2.3 of \cite{chtevexist} gives the result. 
\begin{theorem}  \label{exists}
{\color{black} Assume that $N(x) \in L^{\infty} (D,\R^{3 \times 3})$ and $\eta(x) \in L^{\infty} (\partial D,\R^{3 \times 3})$ are symmetric uniformly positive definite and that either $n_{\text{min}}>1$ or $n_{\text{max}}<1$. Then there exists infinitely many real transmission eigenvalues. }
\end{theorem}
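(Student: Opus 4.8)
The plan is to invoke the abstract existence result, Theorem 2.3 of \cite{chtevexist}, exactly as in the $\eta=0$ problem. By Theorem \ref{Bforms} the operators $\mathbb{B}$ and $\widetilde{\mathbb{B}}$ are self-adjoint, non-negative and compact; by Theorem \ref{Aforms} the families $k\mapsto\mathbb{A}_k$ and $k\mapsto\widetilde{\mathbb{A}}_k$ are self-adjoint, coercive for every $k\in\R\setminus\{0\}$, and (as already noted) analytic in $k$; and Theorem \ref{positive} supplies a $k_1>0$ for which $\mathbb{A}_{k_1}-k_1^2\mathbb{B}$ (resp. $\widetilde{\mathbb{A}}_{k_1}-k_1^2\widetilde{\mathbb{B}}$) is positive on $X(D)$. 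Hence the only remaining hypothesis is the ``upper'' condition: for each $p\in\N$ one must exhibit a $k_2=k_2(p)>k_1$ and a $p$-dimensional subspace $W_p\subset X(D)$ on which $\mathbb{A}_{k_2}-k_2^2\mathbb{B}$ (resp. $\widetilde{\mathbb{A}}_{k_2}-k_2^2\widetilde{\mathbb{B}}$) is non-positive.

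To build $W_p$ I would pass to a fixed ball $B$ with $\overline{B}\subset D$ and use zero extension: any $\vecu\in X(B)$, extended by zero, lies in $X(D)$, and since $\curl\vecu$ vanishes in a neighbourhood of $\partial D$, all the integrals over $\partial D$ in $\mathcal{A}_k$, $\widetilde{\mathcal{A}}_k$, $\mathcal{B}$, $\widetilde{\mathcal{B}}$ drop out. Using the pointwise bounds on $(N-I)^{-1}$ and on $N(I-N)^{-1}$ recorded earlier (together with monotonicity of $t\mapsto 1/(t-1)$ for $t>1$ and of $t\mapsto t/(1-t)$ for $t<1$), the form $\mathcal{A}_k(\vecu,\vecu)$ is, for such $\vecu$, dominated by the corresponding form with constant refractive index $n_{\text{min}}$ on $B$ and no boundary term, and $\widetilde{\mathcal{A}}_k(\vecu,\vecu)$ is dominated by the constant-index-$n_{\text{max}}$ form on $B$, while $\mathcal{B}(\vecu,\vecu)$ and $\widetilde{\mathcal{B}}(\vecu,\vecu)$ both reduce to $\|\curl\vecu\|^2_{{\bf L}^2(B)}$. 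Thus $\mathcal{A}_k(\vecu,\vecu)-k^2\mathcal{B}(\vecu,\vecu)\le \mathcal{A}^0_k(\vecu,\vecu)-k^2\mathcal{B}^0(\vecu,\vecu)$, where $\mathcal{A}^0_k,\mathcal{B}^0$ denote the sesquilinear forms of the classical ($\eta=0$) quad-curl transmission eigenvalue problem on $B$ with constant index, and likewise for the tilde forms.

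The classical constant-coefficient problem on $B$ is precisely the one treated in \cite{chtevexist}, where it is shown to possess infinitely many real transmission eigenvalues; the argument there produces, for every $p$, a value $k_2(p)$ and a $p$-dimensional subspace $W_p\subset X(B)$ on which the operator $\mathbb{A}^0_{k_2(p)}-k_2(p)^2\mathbb{B}^0$ is non-positive (this being exactly the input to Theorem 2.3 of \cite{chtevexist} in that reference). Combined with the domination inequality above, $\mathbb{A}_{k_2(p)}-k_2(p)^2\mathbb{B}$ (resp. the tilde operator) is non-positive on $W_p$ regarded as a subspace of $X(D)$. Theorem 2.3 of \cite{chtevexist} then yields at least $p$ transmission eigenvalues, counted with multiplicity, in $[k_1,k_2(p)]$; since $\mathbb{B}$ (resp. $\widetilde{\mathbb{B}}$) is compact and $\mathbb{A}_k$ (resp. $\widetilde{\mathbb{A}}_k$) is coercive, each generalized eigenvalue has finite multiplicity, so letting $p\to\infty$ forces infinitely many distinct real transmission eigenvalues, proving the claim.

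Beyond routine bookkeeping, I expect the delicate points to be (i) checking that the coefficient comparisons genuinely point in the direction ``perturbed form $\le$ constant-coefficient form'' in both the $n_{\text{min}}>1$ and $n_{\text{max}}<1$ regimes, rather than the reverse, and (ii) stating cleanly that the existence result for the constant-index problem on the ball is quoted from \cite{chtevexist} and not re-derived here — the zero-extension step, which annihilates the $\partial D$ integrals, is exactly what makes that quotation legitimate for the coated-boundary problem.
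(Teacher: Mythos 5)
Your proposal is correct and follows essentially the same route as the paper: compare the quadratic forms, on fields supported compactly inside $D$ (which annihilates the $\partial D$ integrals), with the constant-index, $\eta=0$ forms on balls contained in $D$, and feed the resulting non-positivity subspaces together with Theorem \ref{positive} into Theorem 2.3 of \cite{chtevexist}. The only difference is cosmetic: the paper builds the large-dimensional subspaces explicitly from first transmission eigenfunctions on $M(\ep)$ disjoint small balls, all at the common eigenvalue $k_\ep$, whereas you delegate that construction to \cite{chtevexist} on a single fixed ball --- which, unwound, is the same construction.
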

\begin{proof}
We only present the proof for the case when $n_{\text{min}}>1$ and the other case follows from similar arguments. We let $B_j=B(x_j , \ep):=\{ x \in \R^3 : |x-x_j | <  \ep \}$ where $x_j \in D$ and $\ep>0$. Define $M(\ep)$ the supremum of the number of disjoint balls $B_j$, i.e., $\ov{B_i} \cap \overline{B_j} = \emptyset$, such that $\overline{B_j} \subset D$. Using  separation of variables we have that there exists transmission eigenvalues to 
\begin{align}
{\color{black} \curl \curl \vecw_j -k^2 n_{\text{min}} \vecw_j=0 \quad \text{and} \quad \curl \curl  \vecv_j - k^2 \vecv_j=0}  \quad &\textrm{ in } \,  B_j, \label{teball1} \\
( \vecw_j-\vecv_j)\times \nu =0  \quad  \text{and} \quad  \curl  (\vecw_j-\vecv_j) \times \nu = 0 \quad &\textrm{ on } \partial B_j.  \label{teball2} 
\end{align}
We define $\vecu_j$ as the difference $\vecu_j=\vecw_j-\vecv_j$ in $B_j$ and $\vecu_j = 0$ in $D \setminus {B_j}$ for any transmission eigenvalue of \eqref{teball1}--\eqref{teball2}. It is clear that 
$$\vecu_j \in H_0(\text{curl$^2$} ,D)=\left\{ \vecu \in H_0(\text{curl} ,D)  \, : \,  \curl \vecu \in H_0(\text{curl} ,D)  \right\} \subset X(D).$$
This implies that $X_{M(\ep)} = \text{ span}\{ {\vecu}_1 , {\vecu}_2 , \cdots,  {\vecu}_{M(\ep)}  \}$ forms an $M(\ep)$ dimensional subspace of $X(D)$ since the support of the basis functions are disjoint gives that they are orthogonal. Simple calculations as in Section \ref{sect-definition} gives that for every eigenvalue of \eqref{teball1}--\eqref{teball2}
$${\color{black} 0 = \int\limits_{B_j}  \frac{1}{ n_{\text{min}}-1} | \curl \curl {\vecu}_j - k^2 {\vecu}_j |^2 +k^4 |{\vecu}_j|^2  - k^2 |\curl {\vecu}_j|^2 \,  \dif x}.$$
Denoting $k_\ep$ as the first transmission eigenvalue of \eqref{teball1}--\eqref{teball2} for the ball $B_j$ with the corresponding eigenfunctions $\vecu_j$. Now using the fact that $\vecu_j$ are supported in $B_j$ along with 
$$\big(N(x)-I\big)^{-1} \xi \cdot \ov{\xi} \leq \frac{  |\xi|^2}{n_{\text{min}}-1}  \quad \text{ for all  } \,\, \xi \in \C^3 \quad \text{ and a.e. } \,\, x \in D$$
we have that 
\begin{align*}
\mathcal{A}_{k_{\ep}} ({\vecu}_j, {\vecu}_j ) -k_{\ep}^2\mathcal{B}({\vecu}_j,{\vecu}_j) & \\
&\hspace{-1.2in} =\int\limits_D (N-I)^{-1} \big| \curl \curl \vecu_j -k_{\ep}^2 \vecu_j \big|^2 + k_{\ep}^4  |\vecu_j |^2  - k_{\ep}^2  \big| \curl \vecu_j \big|^2\, \dif x \\
 &\hspace{-1.2in}\leq {\color{black} \int\limits_{B_j} \frac{1}{n_{\text{min}}-1} | \curl \curl  {\vecu}_j - k_{\ep}^2 {\vecu}_j |^2 +k_{\ep}^4 |{\vecu}_j|^2- k_{\ep}^2 |\curl  {\vecu}_j|^2 \, \, \dif x=0}.
\end{align*}
 Therefore, we have that for all $\vecu_j$ satisfy $\mathcal{A}_{k_{\ep}} ({\vecu_j}, {\vecu_j}) -k_{\ep}^2\mathcal{B}({\vecu_j},{\vecu_j}) \leq 0$. Again, using the fact that support of the  functions $\vecu_j$ are disjoint one can easily show that $\mathcal{A}_{k_{\ep}} ({\vecu_j}, {\vecu_i}) -k_{\ep}^2\mathcal{B}({\vecu_j},{\vecu_i}) = 0$ for all $i \neq j$. This implies that 
$$\mathcal{A}_{k_{\ep}} ({\vecu}, {\vecu}) -k_{\ep}^2\mathcal{B}({\vecu},{\vecu}) \leq 0 \quad \text{ for all}  \quad \vecu \in X_{M(\ep)}$$ 
and since $M(\ep) \to \infty$ as $\ep \to 0$ we have that there are infinitely many real transmission eigenvalues by appealing to Theorem 2.3 of \cite{chtevexist}. 
\end{proof}

\subsection{{\color{black}Dependence} on the parameters}
This section is dedicated to showing how the transmission eigenvalues depend on the material parameters $N$ and $\eta$. To this end, we will show that the transmission eigenvalues are monotonic with respect to the material parameters. Using the monotonicity we will then consider the case when $\eta$ tends to either zero or infinity. The case as $\eta$ tends to either zero can be handled just as in the scalar case see \cite{te-cbc2}. For the case when $\eta$ tends to infinity new analysis is given and that can be easily modified for the case of the scalar transmission eigenvalues. 

We now prove that the transmission eigenvalue depend monotonically on the material parameter. This gives that the transmission eigenvalues can be used to estimate one of the material parameters provided the other is known a prior. This can happen in the case of nondestructive testing where if we assume the scatterer is known but one wishes to determine changes to the interior of the material (i.e. changes in $N$).

We first recall that the transmission eigenvalues $k=k(N ,\eta)$ satisfy the equation 
\begin{equation}
\lambda_j (k;N ,\eta)-k^2=0 \label{teveq}
\end{equation}
where $\lambda_j $ is the $j$-th generalized eigenvalue defined in \eqref{geneig}. It is known that the positive generalized eigenvalues of \eqref{geneig} satisfy the min-max principle:
\begin{align*}
\lambda_j(k;N ,\eta)= \min\limits_{ U \in \mathcal{U}_j} \max\limits_{\vecu \in U\setminus \{ 0\} }  \frac{ \mathcal{A}_k(\vecu,\vecu) }{  \mathcal{B}(\vecu,\vecu)}  \,\, \text{ for } \, \, 1<n_{\text{min}}
\end{align*}
or
\begin{align*}
\lambda_j(k;N ,\eta) =\min\limits_{ U \in \mathcal{U}_j} \max\limits_{\vecu \in U \setminus \{ 0\} }  \frac{ \widetilde{\mathcal{A}}_k(\vecu,\vecu) }{ \widetilde{ \mathcal{B}}(\vecu,\vecu)}  \,\, \text{ for } \, \,n_{\text{max}}<1
\end{align*}
where $\mathcal{U}_j$ is the set of all $j$-dimensional subspaces of $X(D)$ whose {\color{black}intersection} with the null space of ${\mathbb{B}}$ or $\widetilde{\mathbb{B}}$ is trivial. 
Notice that the optimizer for the max-min principle when $k$ is a transmission eigenvalue is the corresponding eigenfunction. 

\begin{theorem}\label{mono}
{\color{black}Assume that for $\ell=1,2$ that $N_\ell$ and $\eta_\ell$ real-valued symmetric positive definite matrices such that for all $\xi \in \C^3$ 
$$N_1(x) \xi \cdot \ov{\xi} \leq N_2(x) \xi \cdot \ov{\xi} \,\, \,\text{ a.e. } x \in D \quad \text{and} \quad \eta_1(x) \xi \cdot \ov{\xi}  \leq \eta_2(x) \xi \cdot \ov{\xi} \,\,\, \text{ a.e. } x \in \partial D.$$
Then we have that
\begin{enumerate}
\item if $  |\xi|^2< N_1 \xi \cdot \ov{\xi}$, then $k_j(N_2 , \eta_2) \leq k_j(N_1 , \eta_1)$
\item if $N_2 \xi \cdot \ov{\xi} <  |\xi|^2$, then $k_j(N_1 , \eta_1) \leq k_j(N_2 , \eta_2)$
\end{enumerate}
where $k_j(N , \eta)$ is the smallest solution to \eqref{teveq} for any $j \in \N$. Moreover, if the inequalities for the parameters are strict, then the first transmission eigenvalue is strictly monotone. }
\end{theorem}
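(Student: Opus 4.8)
The plan is to read the monotonicity off the Courant--Fischer min--max characterisation of the generalised eigenvalues $\lambda_j(k;N,\eta)$ stated just before the theorem, combined with the behaviour of $\lambda_j(k;N,\eta)-k^2$ as $k\to 0^+$ recorded in Theorem \ref{positive}. First I would fix $k>0$ and show that each $\lambda_j(k;N,\eta)$ is monotone in the parameters. In the regime $|\xi|^2<N\xi\cdot\ov{\xi}$, where both $N_1$ and $N_2$ lie above $I$, the relevant forms are $\mathcal{A}_k$ and $\mathcal{B}$; crucially $\mathcal{B}(\vecu,\vecu)=\|\curl\vecu\|^2_{{\bf L}^2(D)}$ does not involve $N$ or $\eta$, and its kernel in $X(D)$ --- hence the admissible family $\mathcal{U}_j$ in the min--max --- is the parameter-independent set $\{\vecu\in X(D):\curl\vecu=0\}$. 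Since $0\prec N_1-I\preceq N_2-I$ a.e.\ in $D$ and matrix inversion reverses the order of positive definite matrices, $(N_2-I)^{-1}\preceq(N_1-I)^{-1}$, and likewise $\eta_2^{-1}\preceq\eta_1^{-1}$ a.e.\ on $\partial D$; inspecting \eqref{A1} term by term then gives $\mathcal{A}_k(\vecu,\vecu;N_1,\eta_1)\ge\mathcal{A}_k(\vecu,\vecu;N_2,\eta_2)$ for every $\vecu$, so the Rayleigh quotient $\mathcal{A}_k(\vecu,\vecu)/\mathcal{B}(\vecu,\vecu)$ is pointwise larger for $(N_1,\eta_1)$ and the min--max principle yields $\lambda_j(k;N_2,\eta_2)\le\lambda_j(k;N_1,\eta_1)$. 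In the regime $N\xi\cdot\ov{\xi}<|\xi|^2$ one works with $\widetilde{\mathcal{A}}_k$ and $\widetilde{\mathcal{B}}$; writing $A(I-A)^{-1}=(I-A)^{-1}-I$ shows that $A\mapsto A(I-A)^{-1}$ is order-preserving on $\{0\prec A\prec I\}$, so $N_1(I-N_1)^{-1}\preceq N_2(I-N_2)^{-1}$ makes the $N$-dependent part of $\widetilde{\mathcal{A}}_k(\vecu,\vecu)$ larger for $N_2$, while $\eta_2^{-1}\preceq\eta_1^{-1}$ makes $\widetilde{\mathcal{B}}(\vecu,\vecu)$ larger for $\eta_1$; since $\ker\widetilde{\mathbb{B}}$ is again $\{\vecu\in X(D):\curl\vecu=0\}$, the Rayleigh quotient is pointwise larger for $(N_2,\eta_2)$ and hence $\lambda_j(k;N_1,\eta_1)\le\lambda_j(k;N_2,\eta_2)$.

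Next I would convert this into an inequality between the eigenvalues $k_j$. The mechanism is: if $P$ and $Q$ are admissible parameter pairs lying in the same regime and $\lambda_j(k;P)\le\lambda_j(k;Q)$ for all $k>0$, then $k_j(P)\le k_j(Q)$. Indeed, $\lambda_j(k_j(Q);Q)=k_j(Q)^2$ forces $\lambda_j(k_j(Q);P)-k_j(Q)^2\le 0$, while $k\mapsto\lambda_j(k;P)-k^2$ is continuous (the forms are analytic in $k$) and, by Theorem \ref{positive}, strictly positive for all sufficiently small $k>0$; the Intermediate Value Theorem then produces a zero of $\lambda_j(\,\cdot\,;P)-(\,\cdot\,)^2$ in $(0,k_j(Q)]$, so $k_j(P)\le k_j(Q)$. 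Applying this with $(P,Q)=\big((N_2,\eta_2),(N_1,\eta_1)\big)$ when $|\xi|^2<N_1\xi\cdot\ov{\xi}$ gives statement (1), and with $(P,Q)=\big((N_1,\eta_1),(N_2,\eta_2)\big)$ when $N_2\xi\cdot\ov{\xi}<|\xi|^2$ gives statement (2); in each case the pair in the role of $P$ lies in the regime demanded by Theorem \ref{positive}, so that theorem does apply to it.

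For the strict statement, suppose the parameter inequalities are strict and, say in the second regime, that $k_1:=k_1(N_1,\eta_1)=k_1(N_2,\eta_2)$; I would derive a contradiction. Pick an eigenfunction $\vecu\in X(D)$ for $(N_2,\eta_2)$ at $k_1$, so by the optimiser remark preceding the theorem $\widetilde{\mathcal{A}}_{k_1}(\vecu,\vecu;N_2,\eta_2)/\widetilde{\mathcal{B}}(\vecu,\vecu;N_2,\eta_2)=\lambda_1(k_1;N_2,\eta_2)=k_1^2$; note $\curl\vecu\not\equiv 0$ (else coercivity of $\widetilde{\mathbb{A}}_{k_1}$ forces $\vecu\equiv 0$), so $\vecu$ is admissible in the $(N_1,\eta_1)$ min--max. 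Because $N_2(I-N_2)^{-1}-N_1(I-N_1)^{-1}$ is now positive definite, the numerator strictly decreases and the denominator does not increase when passing from $(N_2,\eta_2)$ to $(N_1,\eta_1)$, so $\widetilde{\mathcal{A}}_{k_1}(\vecu,\vecu;N_1,\eta_1)/\widetilde{\mathcal{B}}(\vecu,\vecu;N_1,\eta_1)<k_1^2$ --- \emph{unless} $\curl\curl\vecu-k_1^2\vecu\equiv 0$ in $D$, in which case the reformulated boundary condition \eqref{teprob4} forces $(\curl\vecu)\times\nu\equiv 0$ on $\partial D$, so $\vecu$ solves $\curl\curl\vecu-k_1^2\vecu=0$ in $D$ with zero Cauchy data $\vecu\times\nu=(\curl\vecu)\times\nu=0$ and hence $\vecu\equiv 0$ by unique continuation for the Maxwell operator, contradicting that $\vecu$ is an eigenfunction. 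Thus $\lambda_1(k_1;N_1,\eta_1)\le\widetilde{\mathcal{A}}_{k_1}(\vecu,\vecu;N_1,\eta_1)/\widetilde{\mathcal{B}}(\vecu,\vecu;N_1,\eta_1)<k_1^2$, contradicting $\lambda_1(k_1;N_1,\eta_1)=k_1^2$; hence $k_1(N_1,\eta_1)<k_1(N_2,\eta_2)$, and the first regime is handled identically with $\mathcal{A}_k,\mathcal{B}$ in place of $\widetilde{\mathcal{A}}_k,\widetilde{\mathcal{B}}$, taking the eigenfunction of the pair with the larger parameter matrices. The step I expect to be the main obstacle is precisely this last non-degeneracy argument --- excluding an eigenfunction that annihilates the volume residual $\curl\curl\vecu-k^2\vecu$ (and hence, via \eqref{teprob4}, also the boundary trace $(\curl\vecu)\times\nu$) --- which rests on the unique continuation property for $\curl\curl\,\cdot\,-k^2$; the rest is a routine assembly of the min--max principle, the operator monotonicity of matrix inversion, and the positivity estimates of Theorems \ref{Aforms} and \ref{positive}.
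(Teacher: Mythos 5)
Your proposal is correct and follows essentially the same route as the paper: fix $k$, compare the Rayleigh quotients via the order-reversal of matrix inversion (and the order-preservation of $A\mapsto A(I-A)^{-1}$), invoke the min--max principle to get $\lambda_j(k;N_2,\eta_2)\le\lambda_j(k;N_1,\eta_1)$, and then convert to $k_j$ by the Intermediate Value Theorem anchored at small $k$ by Theorem \ref{positive}. The one place you go beyond the paper is the strict-monotonicity step: the paper simply asserts the strict form inequality for \emph{all} $\vecu\in X(D)$ and evaluates at the first eigenfunction, whereas you correctly observe that this inequality degenerates when $\curl\curl\vecu-k^2\vecu\equiv 0$ (and hence, via \eqref{teprob4}, $(\curl\vecu)\times\nu\equiv 0$), and you rule that case out by unique continuation for $\curl\curl-k^2$ --- a small but genuine gap in the paper's argument that your version closes.
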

\begin{proof} 
First notice that by it's definition using the min-max principle we have that $\lambda_j (k)$ is continuous on $(0,\infty)$ for all $j \in \N$. We will prove the claim for the first case where $ |\xi|^2 < N_1 \xi \cdot \ov{\xi}$ for all $\xi \in \C^3$ and the other case is similar. Now, assume that $ |\xi|^2< N_1 \xi \cdot \ov{\xi}$ and it is clear by the definition of $ {\mathcal{A}}_k (\cdot \, ,\cdot)$ that 
$${\displaystyle {\mathcal{A}}_k (\vecu,\vecu) \big|^{\eta=\eta_2}_{N=N_2} \,  \leq \, {\mathcal{A}}_k (\vecu,\vecu)\big|^{\eta=\eta_1}_{N=N_1} }$$
for all $\vecu \in X(D)$ since for all $\xi \in \C^3$ 
$$\big(N_2(x)-I\big)^{-1} \xi \cdot \ov{\xi} \leq \big(N_1(x) -I\big)^{-1} \xi \cdot \ov{\xi} \,\, \,\text{ a.e. } x \in \overline{D} \quad \text{and} \quad \eta^{-1}_2(x) \xi \cdot \ov{\xi}  \leq \eta^{-1}_1(x) \xi \cdot \ov{\xi} \,\,\, \text{ a.e. } x \in \partial D.$$
This gives that for any $\vecu \in X(D) \setminus \text{Null}({\mathbb{B}})$ that 
$$\frac{ {\mathcal{A}}_k (\vecu,\vecu) \big|^{\eta=\eta_2}_{N=N_2} }{\mathcal{B} (\vecu,\vecu) }  \,  \leq \, \frac{  {\mathcal{A}}_k (\vecu,\vecu)\big|^{\eta=\eta_1}_{N=N_1} }{\mathcal{B} (\vecu,\vecu) }.$$
Therefore, by the min-max principle the above inequality yields that $\lambda_j(k ; N_2 , \eta_2) \leq \lambda_j(k; N_1 , \eta_1)$ for any $k$ positive. Now, by \eqref{teveq} we have that  $\lambda_j(k_1 ; N_1 , \eta_1)-k_1^2=0$ where $k_1$ is the transmission eigenvalue that is the smallest root of \eqref{teveq} corresponding to $N_1$ and $\eta_1$. Notice, that Theorem \ref{positive} implies that for all $k$ sufficiently small we have that $\lambda_j(k)-k^2>0$ for all $j\in \N$. By appealing to continuity for any $k$ positive we have that $\lambda_j(k ; N_2 , \eta_2) -k^2$ has at least one root in the interval $\left[ { c },  k_1 \right]$, for some $c>0$ and letting $k_2$ be the smallest root of $\lambda_j(k ; N_2 , \eta_2) -k^2$ we conclude that $k_2 \leq k_1$, proving the claim for this case. 

Now assuming that the inequalities for the parameters are strict then 
$${\displaystyle {\mathcal{A}}_k (\vecu,\vecu) \big|^{\eta=\eta_2}_{N=N_2} \,  < \, {\mathcal{A}}_k (\vecu,\vecu)\big|^{\eta=\eta_1}_{N=N_1} }$$
for all $\vecu \in X(D)$. Therefore, letting $\vecu_1$ be the transmission eigenfunction corresponding to the first real transmission eigenvalue $k_1(N_1 , \eta_1)$. We can then conclude that 
$$\lambda_1(k; N_2 , \eta_2) \leq \frac{ {\mathcal{A}}_k (\vecu_1,\vecu_1) \big|^{\eta=\eta_2}_{N=N_2} }{\mathcal{B} (\vecu_1,\vecu_1) }  \,  < \, \frac{  {\mathcal{A}}_k (\vecu_1,\vecu_1)\big|^{\eta=\eta_1}_{N=N_1} }{\mathcal{B} (\vecu_1,\vecu_1) } = \lambda_1(k; N_1 , \eta_1).$$
Then arguing just as above we obtain $k_1(N_2 , \eta_2) <k_1(N_1 , \eta_1)$. 
\end{proof}

From Theorem \ref{mono} we notice that assuming $N=nI$ and $\eta$ known we have that a constant $n$ can be uniquely determined by the first transmission eigenvalue. See for e.g. \cite{te-maxwell-fem} for estimating a constant parameter from the first transmission eigenvalue when $\eta=0$. For the case when $N$ is not a constant multiple of the identity then one tries to find a constant $n$ such that satisfies $k_1(nI , \eta) =k_1(N , \eta)$. This constant $n$ in computational examples is shown to approximately be that average of the eigenvalues of $N$.

Using the Monotonicity result and the variational formulation \eqref{varform} we can proceed just as in \cite{te-cbc2} for the scalar case to prove that as $\eta_{\text{max}} \to 0$ the transmission eigenvalues will converge to the classical Maxwell transmission eigenvalues (i.e. $\eta = 0$).  Moreover, the coercivity estimate gives that the eigenfunctions will converge to the classical Maxwell transmission eigenfunctions. We omit the proof to avoid repetition but it is a simple augmentation of the arguments for the scalar case.  
\begin{theorem} \label{conv-te-1}
There are infinitely many $(k_\eta , \vecu_\eta) \in \R^+ \times X(D)$ eigenpair satisfying \eqref{teprob3}--\eqref{teprob4} with $\eta \neq 0$ where as $\eta_{\text{max}} \to 0$ there is a subsequence such that $k_\eta \rightarrow k_0$ and $\vecu_\eta \to \vecu_0$ in $X(D)$ with $(k_0 , \vecu_0) \in \R^+ \times H_0(\text{curl}\,^2 ,D)$ eigenpair satisfying \eqref{teprob3}--\eqref{teprob4} with $\eta = 0$. 
\end{theorem}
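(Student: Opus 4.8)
The plan is to fix $j\in\N$, take $(k_\eta,\vecu_\eta)\in\R^+\times X(D)$ to be an eigenpair associated with the $j$-th positive generalized eigenvalue of \eqref{geneig} normalized by $\mathcal{B}(\vecu_\eta,\vecu_\eta)=1$, and to let $\eta_{\text{max}}\to0$ along a sequence of admissible coating matrices. The first task is to bound $k_\eta$ uniformly in $\eta$. A lower bound $k_\eta\geq k_*>0$ follows from Theorem \ref{positive}: the constant $C$ appearing there is the Friedrichs constant of Theorem \ref{varspace} and is independent of $\eta$, so $\mathcal{A}_k(\cdot,\cdot)-k^2\mathcal{B}(\cdot,\cdot)$ is positive on $X(D)$ for every $k\in(0,k_*)$ whatever $\eta$ is. For an upper bound I would restrict the min-max principle to the $M(\ep)$-dimensional subspace $X_{M(\ep)}\subset H_0(\text{curl}^2,D)$ built from disjoint balls in the proof of Theorem \ref{exists}: on $H_0(\text{curl}^2,D)$ the boundary integral in $\mathcal{A}_k$ vanishes since $(\curl\vecu)\times\nu=0$, so $\mathcal{A}_k$ and $\mathcal{B}$ are $\eta$-independent there and the computation of Theorem \ref{exists} yields an $\eta$-independent $k_\ep$ with $\lambda_j(k_\ep;N,\eta)-k_\ep^2\leq0$; combined with positivity near $0$ this gives $k_\eta\leq k_\ep$. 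Hence, along a subsequence, $k_\eta\to k_0\in[k_*,k_\ep]$.

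Next I would bound the eigenfunctions. The coercivity estimate of Theorem \ref{Aforms} is likewise uniform in $\eta$, so applying it to the eigenrelation $\mathcal{A}_{k_\eta}(\vecu_\eta,\vecu_\eta)=k_\eta^2\mathcal{B}(\vecu_\eta,\vecu_\eta)=k_\eta^2$ bounds $\|\curl\curl\vecu_\eta\|_{{\bf L}^2(D)}$ and, using $k_\eta\geq k_*$, also $\|\vecu_\eta\|_{{\bf L}^2(D)}$; thus $\{\vecu_\eta\}$ is bounded in $X(D)$, and a further subsequence satisfies $\vecu_\eta\weakc\vecu_0$ in $X(D)$, so $\curl\curl\vecu_\eta\weakc\curl\curl\vecu_0$ in ${\bf L}^2(D)$. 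By Theorem \ref{varspace}, $\curl\vecu_\eta$ is bounded in ${\bf H}^1(D)$, so the compact embeddings ${\bf H}^1(D)\hookrightarrow{\bf L}^2(D)$ and $H^{1/2}(\partial D)\hookrightarrow L^2(\partial D)$ give $\curl\vecu_\eta\to\curl\vecu_0$ strongly in ${\bf L}^2(D)$ and $(\curl\vecu_\eta)\times\nu\to(\curl\vecu_0)\times\nu$ strongly in ${\bf L}^2(\partial D)$. In particular $\mathcal{B}(\vecu_0,\vecu_0)=\lim\mathcal{B}(\vecu_\eta,\vecu_\eta)=1$, so $\vecu_0\neq0$.

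The key point is that taking $\boldsymbol{\varphi}=\vecu_\eta$ in \eqref{A1} and using the eigenrelation gives the identity
\[
\int_D(N-I)^{-1}\big|\curl\curl\vecu_\eta-k_\eta^2\vecu_\eta\big|^2\,\dif x+k_\eta^4\|\vecu_\eta\|^2_{{\bf L}^2(D)}+k_\eta^2\int_{\partial D}\eta^{-1}\big|(\curl\vecu_\eta)\times\nu\big|^2\,\dif s=k_\eta^2,
\]
in which every term on the left is nonnegative. Hence the boundary term is bounded, and from $\eta^{-1}(x)\xi\cdot\ov{\xi}\geq|\xi|^2/\eta_{\text{max}}$ we obtain $\int_{\partial D}|(\curl\vecu_\eta)\times\nu|^2\,\dif s\leq\eta_{\text{max}}\to0$; together with the strong trace convergence this forces $(\curl\vecu_0)\times\nu=0$, i.e. $\vecu_0\in H_0(\text{curl}^2,D)$. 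I would then take test functions $\boldsymbol{\varphi}\in H_0(\text{curl}^2,D)$ in \eqref{varform}, for which the boundary integral is identically zero; passing to the limit in the remaining volume integral by the weak--strong pairing (weak convergence of $\curl\curl\vecu_\eta-k_\eta^2\vecu_\eta$ against the strongly convergent $(N-I)^{-1}(\curl\curl\ov{\boldsymbol{\varphi}}-k_\eta^2N\ov{\boldsymbol{\varphi}})$, plus $k_\eta\to k_0$) shows that $(k_0,\vecu_0)$ satisfies the variational formulation of \eqref{teprob3}--\eqref{teprob4} with $\eta=0$ on $H_0(\text{curl}^2,D)$, i.e. it is a classical Maxwell transmission eigenpair. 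To upgrade weak to strong convergence in $X(D)$, write $a_\eta:=\int_D(N-I)^{-1}|\curl\curl\vecu_\eta-k_\eta^2\vecu_\eta|^2\,\dif x+k_\eta^4\|\vecu_\eta\|^2_{{\bf L}^2(D)}$; the identity gives $a_\eta\leq k_\eta^2\to k_0^2$, while weak lower semicontinuity and the classical eigenrelation (with $\mathcal{B}(\vecu_0,\vecu_0)=1$) give $\liminf a_\eta\geq\int_D(N-I)^{-1}|\curl\curl\vecu_0-k_0^2\vecu_0|^2\,\dif x+k_0^4\|\vecu_0\|^2_{{\bf L}^2(D)}=k_0^2$. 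Therefore $a_\eta\to k_0^2$ and each of its two nonnegative pieces converges separately to the value associated with $\vecu_0$, which promotes $\vecu_\eta\to\vecu_0$ and $\curl\curl\vecu_\eta-k_\eta^2\vecu_\eta\to\curl\curl\vecu_0-k_0^2\vecu_0$ to strong ${\bf L}^2(D)$ convergence, hence $\vecu_\eta\to\vecu_0$ strongly in $X(D)$.

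The step I expect to be the main obstacle is exactly the passage to the limit in the boundary form: because $\eta^{-1}$ blows up as $\eta_{\text{max}}\to0$, the boundary sesquilinear form does \emph{not} converge against general test functions, and the fix is to restrict to test functions in $H_0(\text{curl}^2,D)$ --- which is legitimate precisely because the limiting eigenfunction is shown to lie there, so the limiting problem degenerates to the classical $\eta=0$ one. The case $n_{\text{max}}<1$ follows the same scheme with $\widetilde{\mathcal{A}}_k,\widetilde{\mathcal{B}}$ in place of $\mathcal{A}_k,\mathcal{B}$; now the boundary term sits inside $\widetilde{\mathcal{B}}$, so one normalizes by $\widetilde{\mathcal{B}}(\vecu_\eta,\vecu_\eta)=1$ and argues as in the scalar treatment in \cite{te-cbc2}.
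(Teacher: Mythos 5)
The paper itself omits the proof of Theorem \ref{conv-te-1}, deferring to the scalar argument in \cite{te-cbc2}, so your write-up is being measured against a sketch rather than a worked proof. For the case $n_{\text{min}}>1$ your argument is correct and is exactly the ``simple augmentation'' the paper has in mind: the uniform-in-$\eta$ lower and upper bounds on $k_\eta$ (Theorem \ref{positive} plus the $\eta$-independent subspace $X_{M(\ep)}\subset H_0(\text{curl}^2,D)$ on which the boundary form vanishes), the uniform bound on $\vecu_\eta$ from the coercivity in Theorem \ref{Aforms}, the energy identity giving $\|(\curl\vecu_\eta)\times\nu\|^2_{{\bf L}^2(\partial D)}\leq \eta_{\text{max}}\to 0$ and hence $\vecu_0\in H_0(\text{curl}^2,D)$, the passage to the limit against test functions restricted to $H_0(\text{curl}^2,D)$ (which correctly sidesteps the blow-up of $\eta^{-1}$), and the lower-semicontinuity/norm-convergence device that upgrades weak to strong convergence in $X(D)$. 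The one point where ``the same scheme'' is not verbatim is the case $n_{\text{max}}<1$, which you defer to \cite{te-cbc2}: there the positivity constant in Theorem \ref{positive} contains the factor $1+c/\eta_{\text{min}}$, which degenerates as $\eta_{\text{max}}\to 0$, so the uniform positive lower bound on $k_\eta$ (needed to guarantee $k_0>0$) does not follow from Theorem \ref{positive} alone and requires a separate argument; the paper glosses over this as well, but it is worth acknowledging explicitly rather than asserting that the second case ``follows the same scheme.''
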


We now study the case where the conductivity parameter tends to infinity. This case has not been studied for either the scalar or electromagnetic transmission eigenvalues. The numerical experiments in \cite{te-cbc} seem to suggest that as $\eta_{\text{min}} \to \infty$ the transmission eigenvalues for the scalar case will have a limit since they are monotone and bounded. Here we analyze this case for the electromagnetic transmission eigenvalues and similar analysis can be used for the scalar case. We will show that as $\eta_{\text{min}} \to \infty$ the transmission eigenvalues $k_\eta$ will converge to either a classical Maxwell eigenvalue or `Modified' Maxwell eigenvalue (defined below).

To begin, By appealing to Theorem \ref{mono} we can conclude that there are infinitely many transmission eigenvalues $k_\eta$ that are decreasing with respect to $\eta$. By Theorem \ref{positive} and \ref{mono} we have that there is a positive constant $c$ such that $c \leq k_\eta \leq k_0$. This implies that the set of transmission eigenvalues $\{ k_\eta \}$ is bounded with respect to $\eta$. Now consider a sequence of $\eta$ such that $\eta_{\text{min}} \to \infty$ and we now have that $k_\eta$ has a convergent subsequence where we let $k_\infty>0$ be the limit.  Also, notice that the corresponding transmission eigenfunction $\vecu_\eta$ is non-trivial and the coercivity estimate in Theorem \ref{Aforms} implies that we can take them to be normalized such that $\| \curl \vecu_\eta \|^2_{{\bf L}^2(D) } = 1$. By the coercivity estimate and the fact that $c \leq k_\eta \leq k_0$ we can conclude that there is a constant $\alpha >0$ independent of $\eta$ where 
$$\alpha \| \vecu_\eta \|^2_{X(D)} \leq \mathcal{A}_{\eta , k_\eta } (\vecu_\eta ,\vecu_\eta) =k_\eta^2\mathcal{B}(\vecu_\eta ,\vecu_\eta) \leq k^2_\eta \| \curl \vecu_\eta \|^2_{{\bf L}^2(D) } \leq k^2_0.$$
This gives that the sequence $\vecu_\eta$ is a bounded in the $X(D)$ norm and therefore has a weak limit $\vecu_\infty \in X(D)$. By Theorem \ref{varspace} we have that $ \curl \vecu_\eta$ is strongly convergent in ${\bf L}^2(D)$  which gives that $\| \curl \vecu_\infty \|^2_{{\bf L}^2(D) } = 1$ and therefore $\vecu_\infty \neq 0$.

 We will show that the limiting value $k^2_\infty$ is either a Maxwell eigenvalue or `Modified' Maxwell eigenvalue. Here we define $\tau \in \R_+$ as a Maxwell eigenvalue provided that there exists a nontrivial $\vecv \in H_0(\text{curl} ,D)$ where 
\begin{align}
\curl\curl\vecv - \tau \vecv=0  \quad \textrm{ in } \,  D. \label{maxwell-eig} 
\end{align}
Similarly, we say $\tau \in \R_+$ is a `Modified' Maxwell eigenvalue provided that there exists a nontrivial $\vecw \in H_0(\text{curl} ,D)$ where 
\begin{align}
\curl\curl \vecw - \tau N \vecw=0   \quad \textrm{ in } \,  D. \label{m-maxwell-eig} 
\end{align}
See \cite{AK-maxwell-book} for the existence of infinitely many  Maxwell and  `Modified' Maxwell eigenvalues.

Recall that the transmission eigenvalue problem \eqref{teprob3}--\eqref{teprob4} is equivalent to \eqref{teprob1}--\eqref{teprob2} where the eigenfunctions $\vecw_\eta$ and $\vecv_\eta$ are in ${\bf L}^2(D)$ and are defined by 
$$ k_\eta^2 \vecw_\eta =(N-I)^{-1} \big( \curl \curl \vecu_\eta -k_\eta^2 \vecu_\eta \big) \quad \text{and} \quad k_\eta^2 \vecv_\eta =(N-I)^{-1} \big( \curl \curl \vecu_\eta -k_\eta^2 N\vecu_\eta \big).$$ 
Since $\vecu_\eta$  is bounded in $X(D)$ we have that $\vecw_\eta$ and $\vecv_\eta$ are bounded ${\bf L}^2(D)$. This give that there exists subsequences (still denoted with $\eta$) such that $\vecw_\eta \weakc \vecw_\infty$ and $\vecv_\eta \weakc \vecv_\infty$ as $\eta_{\text{min}} \to \infty$ for some functions $\vecw_\infty$ and $\vecv_\infty$ in ${\bf L}^2(D)$. The goal is to prove that $\vecv_\infty$ and $\vecw_\infty$ satisfy either \eqref{maxwell-eig} or \eqref{m-maxwell-eig} respectively, and that both can't be zero vectors. Due to the fact that $\vecv_\eta$ and $\vecw_\eta$ satisfy \eqref{teprob1} it is clear that $\vecv_\infty$ and $\vecw_\infty$ satisfy \eqref{maxwell-eig} or \eqref{m-maxwell-eig} respectively in the distributional sense with $\tau=k^2_\infty$. Now, notice that the boundary condition \eqref{teprob2} implied that  
\begin{align*}
{\color{black} \eta^{-1} \big( ( \curl \vecu_\eta \big) \times \nu ) = \big( (\nu \times  \vecw_\eta ) \times \nu\big) \quad \text{ on } \,\,  \partial D}
\end{align*}
where the equality is understood in the trace sense. Since $\vecu_\eta$ is bounded in $X(D)$ Theorem \ref{varspace} gives that $\| (\curl \vecu_\eta) \times \nu \|^2_{{\bf L}^2(\partial D) } $ is bounded. 
{\color{black} By using the cross-product identity $|{\bf a} \times {\bf  b}|^2 = |{\bf a}|^2 |{\bf b}|^2 - ({\bf a} \cdot {\bf b})^2$ we obtain that }
$$ {\color{black} \| \vecw_\eta \times \nu \|^2_{{\bf L}^2(\partial D) } =\| (\nu \times \vecw_\eta )\times \nu \|^2_{{\bf L}^2(\partial D) }  \leq C \eta^{-2}_{\text{min}} \to 0 \quad \text{as} \quad \eta_{\text{min}} \to \infty}$$
where $C>0$ is a constant independent of $\eta$.  Therefore, by \eqref{teprob2} we can conclude that $\vecw_\infty \times \nu=\vecv_\infty \times \nu=0$ on $\partial D$. By appealing to the fact that
$$\curl\curl \vecw_\infty - k_\infty^2 N(x) \vecw_\infty=0 \quad \text{and} \quad  \curl\curl\vecv_\infty - k_\infty^2 \vecv_\infty=0  \quad \text{ in } \,  D$$
Green's Theorem gives that $\vecw_\infty$ and $\vecv_\infty$ are in $H_0(\text{curl} ,D)$. 

The one thing left to prove is that $\vecw_\infty$ and $\vecv_\infty$ can not both be the zero vector. To this end, assume on the contrary that they are both the zero vector. Then we can conclude by their representations with $\vecu_\infty$ that we have 
$$\curl\curl \vecu_\infty - k_\infty^2 N(x) \vecu_\infty=0 \quad \text{and} \quad  \curl\curl\vecu_\infty - k_\infty^2 \vecu_\infty=0  \quad \text{ in } \,  D.$$
Therefore, by subtracting the equations we conclude that $(N-I)\vecu_\infty =0$ in $D$ which gives that $\vecu_\infty$ is the zero vector since either $n_{\text{max}}<1$ or $n_{\text{min}}>1$ {\color{black} which implies that the matrix $N-I$ is invertible}. Due to the fact that $\| \curl \vecu_\infty \|^2_{{\bf L}^2(D) } = 1$ this gives a contradiction. Therefore, at least one of the limiting vectors $\vecv_\infty$ and $\vecw_\infty$ must be none trivial. This analysis implies the following result. 

\begin{theorem} \label{conv-te-2}
There are infinitely many  $(k_\eta , \vecv_\eta , \vecw_\eta) \in \R^+ \times{\bf L}^2(D) \times {\bf L}^2(D)$ eigenpairs satisfying \eqref{teprob1}--\eqref{teprob2} where as $\eta_{\text{min}} \to \infty$ there is a subsequence such that $k_\eta \rightarrow k_\infty$ where $k^2_\infty$ is either a Maxwell's eigenvalue or `Modified' Maxwell eigenvalue. Moreover, the corresponding eigenfunctions $\vecv_\eta \weakc \vecv_\infty$ a Maxwell's eigenfunction or $\vecw_\eta \weakc \vecw_\infty$ a `Modified' Maxwell eigenfunction as $\eta_{\text{min}} \to \infty$. 
\end{theorem}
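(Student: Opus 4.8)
The plan is to assemble the a priori bounds and compactness statements already in hand and then pass to the limit in the equivalent pair formulation. First I would invoke Theorem \ref{mono} to see that the branch $k_\eta$ is monotone decreasing in $\eta$, and combine this with Theorem \ref{positive} to obtain a uniform two-sided bound $c \le k_\eta \le k_0$ with $c>0$; hence $\{k_\eta\}$ is bounded and a subsequence converges to some $k_\infty>0$. Normalizing the eigenfunctions by $\|\curl \vecu_\eta\|^2_{{\bf L}^2(D)}=1$ and using the coercivity estimate of Theorem \ref{Aforms} together with the variational identity $\mathcal{A}_{k_\eta}(\vecu_\eta,\vecu_\eta)=k_\eta^2\mathcal{B}(\vecu_\eta,\vecu_\eta)\le k_0^2$, I would derive a bound on $\|\vecu_\eta\|_{X(D)}$ independent of $\eta$, extract a weak limit $\vecu_\infty\in X(D)$, and use the compact embedding in Theorem \ref{varspace} (strong ${\bf L}^2(D)$ convergence of $\curl\vecu_\eta$) to get $\|\curl\vecu_\infty\|^2_{{\bf L}^2(D)}=1$, so $\vecu_\infty\neq 0$.

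Next I would recover the pair $(\vecw_\eta,\vecv_\eta)$ from $\vecu_\eta$ via $k_\eta^2\vecw_\eta=(N-I)^{-1}\big(\curl\curl\vecu_\eta-k_\eta^2\vecu_\eta\big)$ and $k_\eta^2\vecv_\eta=(N-I)^{-1}\big(\curl\curl\vecu_\eta-k_\eta^2 N\vecu_\eta\big)$. Since $\vecu_\eta$ is bounded in $X(D)$, both are bounded in ${\bf L}^2(D)$, so up to a further subsequence $\vecw_\eta\weakc\vecw_\infty$ and $\vecv_\eta\weakc\vecv_\infty$. Because $(\vecw_\eta,\vecv_\eta)$ solve \eqref{teprob1} with parameter $k_\eta\to k_\infty$ and the coefficients $N$ and $I$ are fixed, passing to the limit in the distributional form shows $\curl\curl\vecw_\infty=k_\infty^2 N\vecw_\infty$ and $\curl\curl\vecv_\infty=k_\infty^2\vecv_\infty$ in $D$, i.e. $\tau=k_\infty^2$ satisfies \eqref{m-maxwell-eig} or \eqref{maxwell-eig} respectively. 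For the boundary behaviour I would use the second transmission condition rewritten as $\eta^{-1}\big((\curl\vecu_\eta)\times\nu\big)=(\nu\times\vecw_\eta)\times\nu$ on $\partial D$: since $\|(\curl\vecu_\eta)\times\nu\|^2_{{\bf L}^2(\partial D)}$ is uniformly bounded by Theorem \ref{varspace} while $|\eta^{-1}|\le\eta_{\text{min}}^{-1}\to 0$, the identity $|{\bf a}\times{\bf b}|^2=|{\bf a}|^2|{\bf b}|^2-({\bf a}\cdot{\bf b})^2$ forces $\|\vecw_\eta\times\nu\|^2_{{\bf L}^2(\partial D)}\to 0$, hence $\vecw_\infty\times\nu=\vecv_\infty\times\nu=0$ and, by Green's Theorem, $\vecw_\infty,\vecv_\infty\in H_0(\text{curl},D)$.

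To conclude I must rule out that $\vecw_\infty$ and $\vecv_\infty$ both vanish. If they did, the representation formulas would give $\curl\curl\vecu_\infty-k_\infty^2 N\vecu_\infty=0$ and $\curl\curl\vecu_\infty-k_\infty^2\vecu_\infty=0$ in $D$; subtracting and using that $N-I$ is invertible (since $n_{\text{max}}<1$ or $n_{\text{min}}>1$) yields $\vecu_\infty=0$, contradicting $\|\curl\vecu_\infty\|^2_{{\bf L}^2(D)}=1$. Thus $k_\infty^2$ is a Maxwell eigenvalue when $\vecv_\infty\neq 0$ or a `Modified' Maxwell eigenvalue when $\vecw_\infty\neq 0$, and the existence of infinitely many such limiting eigenpairs is inherited from Theorem \ref{exists} applied along each branch. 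I expect the main obstacle to be the boundary-trace step: one must justify that the identity relating $(\curl\vecu_\eta)\times\nu$ and $\vecw_\eta\times\nu$ holds in $L^2(\partial D)$ with constants independent of $\eta$, and reconcile the merely weak ${\bf L}^2(D)$ convergence of $\vecw_\eta$ with the strong vanishing of its tangential trace, which is precisely what turns the limit into a Maxwell (or modified Maxwell) eigenvalue problem rather than a transmission one.
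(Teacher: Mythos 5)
Your proposal follows the paper's own argument essentially step for step: the two-sided bound on $k_\eta$ from monotonicity and Theorem \ref{positive}, the normalization $\|\curl\vecu_\eta\|_{{\bf L}^2(D)}^2=1$ with the coercivity estimate to bound $\vecu_\eta$ in $X(D)$, the recovery of $(\vecw_\eta,\vecv_\eta)$ and passage to weak limits, the vanishing of the tangential traces via the rewritten boundary condition and $\eta_{\text{min}}^{-1}\to 0$, and the contradiction argument showing $\vecw_\infty$ and $\vecv_\infty$ cannot both vanish. This matches the paper's proof, so no further comparison is needed.
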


\section{Scalar zero-index transmission eigenvalue problem}\label{acoustic-prob}
In this section, we derive the so-called zero-index transmission eigenvalue problem associated with \eqref{direct4}--\eqref{direct5} in a similar manner to the work done in \cite{zi-te}. The main advantage is that this is a linear eigenvalue problem which gives that we can appeal to standard analytical tools for studying eigenvalue problems for a compact operators. In \cite{zi-te} the authors derive the well-known  plate buckling eigenvalue problem for the case where $\eta =0$. This eigenvalue problem is derived by imbedding the scattering object $D$ in a background material with a zero index of refraction and studying the injectivity of the far-field operator. To this end, the total field is given by $\widetilde{u}=\widetilde{u}^s+u^i \in H^1_{loc}(\R^m)$ for {\color{black}$m=2,3$}  where again the incident field is defined by $u^i=\text{e}^{\text{i} k x \cdot d}$ with the incident direction $d$ satisfying 
\begin{align}
\Delta \widetilde{u} +k^2 \, \widetilde{u} =0  \quad \textrm{ in }  \R^m \setminus \overline{D}  \quad  \text{and} \quad  \Delta \widetilde{u} =0  \quad &\textrm{ in } \,  {D}  \label{direct6}\\
\widetilde{u} _+ - \widetilde{u} _-=0  \quad  \text{and} \quad {\partial_\nu \widetilde{u} _+}- {\partial_\nu \widetilde{u} _-} =0 \quad &\textrm{ on } \partial D. \label{direct7}
\end{align}
We assume that the scattered field $\widetilde{u}^s$ satisfies the Sommerfeld radiation condition uniformly with respect to the direction $\hat{x}=x/|x|$. It can be shown that \eqref{direct6}--\eqref{direct7} along with the radiation condition is well-posed (see for e.g. \cite{TE-book} for the mathematical framework). Since $D$ is known we can assume that the zero-index scattered field $\widetilde{u}^s$ is also known.

Similar to the previous section we assume that the coefficients in the scattering problem \eqref{direct4}--\eqref{direct5} are such that $n \in L^{\infty} (D)$ and $\eta \in L^{\infty} (\partial D)$. Furthermore, we assume that they are uniformly positive definite functions and there exists positive constants such that 
$$ n_{\text{min}}  \leq n(x) \leq n_{\text{max}}  \quad \text{ a.e.} \, \, x \in \overline{D} \quad \text{ and } \quad  \eta_{\text{min}} \leq \eta(x)\leq \eta_{\text{max}}\quad \text{ a.e.} \, \, x \in \partial D.$$
The scattered fields $u^s$ satisfying \eqref{direct4}--\eqref{direct5} and $\widetilde{u}^s$ satisfying \eqref{direct6}--\eqref{direct7} have the asymptotic expansion as  $|x| \to \infty$
$$u^s(x,d)=\frac{\text{e}^{\text{i}k|x|}}{|x|^{(m-1)/2}} \left\{u^{\infty}(\hat{x}, d ) + \mathcal{O} \left( \frac{1}{|x|}\right) \right\} $$
 {and} 
 $$ \widetilde{u}^s(x,d)=\frac{\text{e}^{\text{i}k|x|}}{|x|^{(m-1)/2}} \left\{\widetilde{u}^{\infty}(\hat{x}, d ) + \mathcal{O} \left( \frac{1}{|x|}\right) \right\}$$
(see for e.g. \cite{TE-book}). Here $u^{\infty}$ and $\widetilde{u}^{\infty}$ are the far-field patterns that depend on the incident and observation directions. 
Now we define the far-field operator ${F}$ and $\widetilde{F}: L^2(\mathbb{S}) \longmapsto  L^2(\mathbb{S})$ by 
$$Fg(\hat{x})=\int_{\mathbb{S}} u_\infty(\hat{x},d) g(d) \, \text{d}s(d) \quad \text{ and  } \quad \widetilde{F}g(\hat{x})=\int_{\mathbb{S}} \widetilde{u}_\infty(\hat{x},d) g(d) \, \text{d}s(d).$$
It is can be shown (see for e.g. \cite{TE-book}) using Rellich's Lemma and unique continuation that the relative far-field operator ${F}-\widetilde{F}$ is injective with a dense range provide that $k$ is not an associated zero-index transmission eigenvalue. These values are defined as the values $k \in \C$ such that there exists a nontrivial pair $(u,\widetilde{u} ) \in H^1(D) \times H^1(D)$ satisfying the system 
\begin{align}
\Delta u +k^2 n u=0 \quad \text{and} \quad \Delta \widetilde{u} =0  \quad &\textrm{ in } \,  D \label{zi-teprob1} \\
 u-\widetilde{u} =0  \quad  \text{and} \quad {\partial_\nu {u}} - {\partial_\nu \widetilde{u}}= \eta u   \quad &\textrm{ on } \partial D.  \label{zi-teprob2} 
\end{align} 

We will turn the zero-index transmission eigenvalue problem \eqref{zi-teprob1}--\eqref{zi-teprob2} into a fourth order eigenvalue problem. To this end, notice that the difference $w=u-\widetilde{u} \in H^1_0(D)$ satisfies 
$$\Delta w +k^2 n w= -k^2 n \widetilde{u} \quad \textrm{ in } \,  D$$
which implies that $w \in H^2(D) \cap H^1_0(D)$ by appealing to standard elliptic regularity(see for e.g. \cite{evans}). Using the fact that $\widetilde{u}$ is harmonic in $D$ along with the conductive boundary condition in \eqref{zi-teprob2} we have that 
\begin{align}
\Delta \frac{1}{n} (\Delta w +k^2 n w)=0 \quad \textrm{ in } \,  D \quad \text{and} \quad  {\partial_\nu w}= -  \frac{\eta}{k^2n}(\Delta w +k^2 n w)  \quad \textrm{ on } \,  \partial D. \label{zi-teprob3} 
\end{align} 
It is clear that the zero-index eigenvalue problems are equivalent since given a nontrivial solution $w\in H^2(D) \cap H^1_0(D)$ to \eqref{zi-teprob3} we can obtain $\widetilde{u} \in H^1(D)$ by solving 
$$\Delta \widetilde{u}=0  \quad \textrm{ in } \,  D \quad \text{ and } \quad   {\color{black} {\partial_\nu w} - \eta \widetilde{u} = 0}   \quad \textrm{ on } \partial D$$
since $ \eta^{-1} {\partial_\nu w}\in H^{1/2}(\partial D)$ and then defining $u=w + \widetilde{u}$. Therefore, we will study the zero-index transmission eigenvalue problem \eqref{zi-teprob3}. We will see that \eqref{zi-teprob3} is a linear eigenvalue problem for the spectral parameter $k^2$. Also, notice that the boundary value problem only requires $n>0$ where as the original transmission eigenvalue problem studied in \cite{te-cbc} one divides by $n-1$ which means that one must require that the contrast $n-1$ is of one sign in $D$. This can be  an onerous assumption to make for an unknown material parameter in practice.

\subsection{Existence of zero-index transmission eigenvalues}\label{exist-acoustic-prob}

In this section, we will prove the existence of infinity many real zero-index transmission eigenvalues. The eigenvalue problem \eqref{zi-teprob3} is easier to analyze than the standard transmission eigenvalue problem. Just as in the case of the electromagnetic eigenvalues studied in the previous section we will use a variational formulation to analyze \eqref{zi-teprob3}. To do so, we first recall that by the well-posedness of the Poisson problem for the Laplacian with zero Dirichlet data along with the $H^2$ elliptic regularity estimate (see for e.g. \cite{evans}) we have that there is a constant $C>0$ such that 
$$\|\varphi\|^2_{H^2(D)} \leq C \|\Delta \varphi\|^2_{L^2(D)} \quad \text{ for all } \quad \varphi \in H^2(D) \cap H^1_0(D).$$ 
This implies that we can take $\|\Delta \cdot \|_{L^2(D)}$ to be the norm with the associated inner-product on the Hilbert space $H^2(D) \cap H^1_0(D)$. Here $H^2(D)$ and $H_0^1(D)$ are the standard Sobolev spaces of $L^2$ functions with weak derivatives in $L^2$.

It is clear that by using Green's Theorem that the variational form of the eigenvalue problem \eqref{zi-teprob3} is given by
\begin{align}
\int\limits_{D} \frac{1}{n} \Delta w \Delta \overline{\varphi} -k^2 \grad w \cdot \grad \overline{\varphi} \, \text{d}x + \int\limits_{\partial D} \frac{k^2}{\eta} {\partial_\nu w} {\partial_\nu  \overline{\varphi} }\, \text{d}s= 0   \label{zi-varform} 
\end{align} 
for all $\varphi \in H^2(D) \cap H^1_0(D)$. Notice that in the variational formulation \eqref{zi-varform} we can use the Riesz representation theorem to define the bounded linear operators $\mathbb{T}$ and $\mathbb{K}$ that maps $H^2(D) \cap H^1_0(D)$ into itself such that 
$$(\mathbb{T} w, \varphi)_{H^2(D)} = \int\limits_{D} \frac{1}{n} \Delta w \Delta \overline{\varphi}\, \text{d}x$$
and 
$$(\mathbb{K} w, \varphi)_{H^2(D)} =\int\limits_{D} \grad w \cdot \grad \overline{\varphi} \, \text{d}x - {\color{black} \int\limits_{\partial D} \frac{1}{\eta} {\partial_\nu w} {\partial_\nu  \overline{\varphi} }\, \text{d}s }$$
for all $\varphi \in H^2(D) \cap H^1_0(D)$. This gives that $k$ is a zero-index transmission eigenvalue if and only if the null space of $\mathbb{T} - k^2 \mathbb{K}$ is non-trivial. From the definition we can clearly see that both $\mathbb{T}$ and $\mathbb{K}$ are self-adjoint since the sesquilinear forms defining them are Hermitian. The compact imbedding of  $H^{1/2}(\partial D)$ into $L^2(\partial D)$ and $H^{2}(D)$ into $H^1( D)$ implies that $\mathbb{K}$ is compact. Since $n_{\text{max}}>0$ we have that $\mathbb{T}$ is coercive and all together we have that there exists at most a discrete set of real valued $k^2$ where the null space of $\mathbb{T} - k^2 \mathbb{K}$ is non-trivial. Notice that for $k^2 \neq 0$ and a zero-index transmission eigenvalue then $1/k^2$ is an eigenvalue of the compact operator $\mathbb{T}^{-1}\mathbb{K}$. This gives the following result. 
\begin{theorem}  \label{zi-te-exists}
{\color{black}Assume that $n(x) \in L^{\infty} (D)$ and $\eta(x) \in L^{\infty} (\partial D)$ are uniformly positive. Then the set of zero-index transmission eigenvalues is countably infinite with no finite accumulation points. }
\end{theorem}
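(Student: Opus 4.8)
The plan is to recast the eigenvalue problem \eqref{zi-teprob3} as a spectral problem for a self-adjoint compact operator on the Hilbert space $\mathcal{H} := H^2(D) \cap H^1_0(D)$, equipped with the inner product $(w,\varphi) \mapsto \int_D \Delta w \, \Delta \overline{\varphi}\, \dif x$, which is legitimate by the elliptic regularity estimate $\|\varphi\|^2_{H^2(D)} \leq C\|\Delta \varphi\|^2_{L^2(D)}$ recalled just before the statement. I would first establish that the operators $\mathbb{T}$ and $\mathbb{K}$ defined via Riesz representation are well defined and bounded: for $\mathbb{T}$ this uses $1/n \in L^\infty(D)$ (since $n \geq n_{\text{min}} > 0$), and for $\mathbb{K}$ one needs the volume term $\int_D \grad w \cdot \grad \overline{\varphi}$, which is bounded since $\mathcal H \hookrightarrow H^1(D)$, together with the boundary term $\int_{\partial D} \frac{1}{\eta} \partial_\nu w \, \partial_\nu \overline{\varphi}\, \dif s$, which is bounded because $1/\eta \in L^\infty(\partial D)$ and $\partial_\nu(\cdot)$ maps $H^2(D)$ continuously into $H^{1/2}(\partial D) \subset L^2(\partial D)$ by the trace theorem.

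Next I would verify the three structural facts that make the abstract spectral theory apply. Self-adjointness of $\mathbb{T}$ and $\mathbb{K}$ is immediate since the defining sesquilinear forms are Hermitian (all coefficients $1/n$, $1/\eta$ being real-valued). Coercivity of $\mathbb{T}$ follows from $(\mathbb{T}w, w)_{\mathcal H} = \int_D \frac{1}{n}|\Delta w|^2\, \dif x \geq \frac{1}{n_{\text{max}}}\|\Delta w\|^2_{L^2(D)} = \frac{1}{n_{\text{max}}}\|w\|^2_{\mathcal H}$, so $\mathbb{T}$ is boundedly invertible by Lax--Milgram. Compactness of $\mathbb{K}$ is the key analytic input: the map $w \mapsto \grad w$ is compact from $\mathcal H$ into $[L^2(D)]^m$ by the Rellich--Kondrachov compact embedding $H^2(D) \hookrightarrow\hookrightarrow H^1(D)$, and the map $w \mapsto \partial_\nu w$ is compact from $\mathcal H$ into $L^2(\partial D)$ because it factors as $H^2(D) \to H^{1/2}(\partial D) \hookrightarrow\hookrightarrow L^2(\partial D)$; hence $\mathbb{K}$, being a sum of two compositions each involving a compact factor, is compact.

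With these in hand, $k$ is a zero-index transmission eigenvalue precisely when $\mathrm{Null}(\mathbb{T} - k^2\mathbb{K})$ is nontrivial, equivalently (dividing by $k^2 \neq 0$, noting $k = 0$ is not an eigenvalue since then $\mathbb{T}w = 0$ forces $w = 0$ by coercivity) when $1/k^2$ is an eigenvalue of the compact operator $\mathbb{T}^{-1}\mathbb{K}: \mathcal H \to \mathcal H$. Since $\mathbb{T}^{-1}\mathbb{K}$ is compact, its spectrum is at most countable with $0$ as the only possible accumulation point; consequently the set of admissible values $k^2$ is countable with no finite accumulation point, and the same holds for the set of zero-index transmission eigenvalues $k$. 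To get that this set is genuinely \emph{infinite} (rather than merely ``at most discrete''), one would observe that $\mathbb{T}^{-1}\mathbb{K}$ is in fact a self-adjoint compact operator when $\mathcal H$ is re-normed by the $\mathbb{T}$-inner product $(w,\varphi)_{\mathbb{T}} := (\mathbb{T}w,\varphi)_{\mathcal H}$, and that $\mathbb{K}$ is nonzero on an infinite-dimensional subspace, so the spectral theorem for compact self-adjoint operators yields infinitely many nonzero eigenvalues. The main obstacle is making the infinitude claim rigorous: boundedness, self-adjointness, coercivity and the discreteness are routine, but one must argue that $\mathbb{T}^{-1}\mathbb{K}$ has infinitely many nonzero eigenvalues, which requires showing its range (equivalently, the range of $\mathbb{K}$) is infinite-dimensional — this is where one invokes that the gradient term alone already has infinite rank on $\mathcal H$, so the finite-rank alternative cannot occur.
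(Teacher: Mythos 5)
Your argument is essentially the paper's own: the paper likewise defines $\mathbb{T}$ and $\mathbb{K}$ by Riesz representation on $H^2(D)\cap H^1_0(D)$ with the $\|\Delta\cdot\|_{L^2(D)}$ norm, observes that $\mathbb{T}$ is coercive and self-adjoint, that $\mathbb{K}$ is self-adjoint and compact via the compact embeddings of $H^2(D)$ into $H^1(D)$ and of $H^{1/2}(\partial D)$ into $L^2(\partial D)$, and concludes that $k$ is a zero-index transmission eigenvalue exactly when $1/k^2$ is an eigenvalue of the compact operator $\mathbb{T}^{-1}\mathbb{K}$, whence discreteness. One step needs tightening: your justification that ``the gradient term alone already has infinite rank, so the finite-rank alternative cannot occur'' is not airtight, because the sum of an infinite-rank operator and another operator can still have finite rank; the clean fix is to observe that the boundary term vanishes on the infinite-dimensional subspace $H^2_0(D)$, where $(\mathbb{K}w,w)_{H^2(D)}=\|\grad w\|^2_{L^2(D)}>0$ for $w\neq 0$, so that a finite-rank self-adjoint $\mathbb{K}$ would have a kernel of finite codimension meeting $H^2_0(D)$ nontrivially, a contradiction; hence $\mathbb{T}^{-1}\mathbb{K}$, self-adjoint in the $\mathbb{T}$-inner product, has infinitely many nonzero eigenvalues. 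It is worth noting that the paper's own discussion at this point only establishes ``at most a discrete set'' and in effect defers the infinitude to the later theorem producing infinitely many real eigenvalues, so your direct treatment of the countably-infinite claim is a genuine (and easily repaired) strengthening of what is written there.
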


This does not give that the values $k$ are real since the operator  $\mathbb{K}$ is not necessarily positive due to the opposite signs in the definition so there may be negative eigenvalues which would correspond to $k$ being purely imaginary. Our numerical calculations have found purely imaginary eigenvalues $k$ which would suggest that the operator $\mathbb{K}$ is not positive. In order to prove the existence of infinitely many real zero-index transmission eigenvalues we again appeal to the analytic framework in \cite{chtevexist}. To do so, we know define the operators  $\mathbb{A}_k$ and $\mathbb{B}$ mapping $H^2(D) \cap H^1_0(D)$ into itself such that 
\begin{align}
(\mathbb{A}_k w, \varphi)_{H^2(D)} = \int\limits_{D} \frac{1}{n} \Delta w \Delta \overline{\varphi}\, \text{d}x +{\color{black} \int\limits_{\partial D} \frac{k^2}{\eta} {\partial_\nu w} {\partial_\nu  \overline{\varphi} }\, \text{d}s} \label{def-Ak}
\end{align} 
and 
\begin{align}
(\mathbb{B} w, \varphi)_{H^2(D)} =\int\limits_{D} \grad w \cdot \grad \overline{\varphi} \, \text{d}x \quad \text {for all} \quad \varphi \in H^2(D) \cap H^1_0(D) \label{def-B}
\end{align} 
via the Riesz representation theorem. This now give that $k$ is a zero-index transmission eigenvalue if and only if the null space of $\mathbb{A}_k - k^2 \mathbb{B}$ is non-trivial. Therefore, we have that by \eqref{def-B} $\mathbb{B}$ is a positive, self-adjoint compact operator by the compact imbedding of  $H^{2}(D)$ into $H^1( D)$. Clearly by \eqref{def-Ak} the operator $\mathbb{A}_k$ is  coercive, self-adjoint and depends analytically of $k$ with 
$$(\mathbb{A}_k w, w)_{H^2(D)} \geq n^{-1}_{\text{max}} \|\Delta w \|^2_{L^2(D)} \quad \text{ for all } \quad w \in H^2(D) \cap H^1_0(D).$$ 
This gives the following result. 

\begin{theorem}
{\color{black}Assume that $n(x) \in L^{\infty} (D)$ and $\eta(x) \in L^{\infty} (\partial D)$ are uniformly positive. Then $\mathbb{A}_k$ defined by  \eqref{def-Ak} is coercive, self-adjoint and $\mathbb{B}$ defined by \eqref{def-B} is a positive, self-adjoint and compact. }  
\end{theorem}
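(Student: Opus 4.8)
The plan is to verify each of the four asserted properties directly from the defining sesquilinear forms in \eqref{def-Ak} and \eqref{def-B}, using throughout the fact (established just above) that on $H^2(D)\cap H^1_0(D)$ we may take $\|\cdot\|_{H^2(D)}=\|\Delta\cdot\|_{L^2(D)}$, which is what makes the coercivity constant explicit.

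First I would treat $\mathbb{A}_k$. Self-adjointness follows because, for real-valued $n$ and $\eta$ and $k\in\R$, the form $(w,\varphi)\mapsto\int_D \tfrac1n\Delta w\,\Delta\overline\varphi\,\dif x+\int_{\partial D}\tfrac{k^2}{\eta}\partial_\nu w\,\partial_\nu\overline\varphi\,\dif s$ satisfies $\mathcal{A}_k(w,\varphi)=\overline{\mathcal{A}_k(\varphi,w)}$, so the Riesz identification gives $(\mathbb{A}_k w,\varphi)_{H^2(D)}=(w,\mathbb{A}_k\varphi)_{H^2(D)}$. For coercivity I would set $\varphi=w$ and discard the non-negative boundary term (here $\eta>0$ and $k^2\ge 0$), obtaining $(\mathbb{A}_k w,w)_{H^2(D)}\ge \int_D\tfrac1n|\Delta w|^2\,\dif x\ge n_{\text{max}}^{-1}\|\Delta w\|^2_{L^2(D)}=n_{\text{max}}^{-1}\|w\|^2_{H^2(D)}$, which is exactly the displayed estimate. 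Analyticity of $k\mapsto\mathbb{A}_k$ is immediate since the dependence on $k$ is only through the scalar $k^2$ multiplying a fixed bounded form.

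Next I would treat $\mathbb{B}$. Self-adjointness is the same Riesz argument applied to the Hermitian form $\int_D\grad w\cdot\grad\overline\varphi\,\dif x$. For positivity, $(\mathbb{B}w,w)_{H^2(D)}=\|\grad w\|^2_{L^2(D)}\ge 0$, with equality only if $\grad w=0$ a.e.; since $w\in H^1_0(D)$ and $D$ is connected, the Poincar\'e inequality forces $w=0$, so $\mathbb{B}$ is positive. For compactness I would use the bound $|(\mathbb{B}w,\varphi)_{H^2(D)}|\le\|\grad w\|_{L^2(D)}\|\grad\varphi\|_{L^2(D)}\le\|w\|_{H^1(D)}\|\varphi\|_{H^2(D)}$, which yields $\|\mathbb{B}w\|_{H^2(D)}\le\|w\|_{H^1(D)}$; hence $\mathbb{B}$ factors through the inclusion $H^2(D)\cap H^1_0(D)\hookrightarrow H^1(D)$, which is compact by Rellich--Kondrachov, so $\mathbb{B}$ is compact.

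I do not expect any real obstacle; the argument is bookkeeping once the norm equivalence $\|\cdot\|_{H^2(D)}=\|\Delta\cdot\|_{L^2(D)}$ on $H^2(D)\cap H^1_0(D)$ is in hand. The only two points needing a word of care are: (i) the boundary term in $\mathbb{A}_k$ is well defined because $\partial_\nu w\in H^{1/2}(\partial D)\subset L^2(\partial D)$ for $w\in H^2(D)$ and $\eta^{-1}\in L^\infty(\partial D)$ (mirroring the remark already used for the variational form); and (ii) positivity of $\mathbb{B}$, as opposed to mere non-negativity, genuinely uses the homogeneous Dirichlet condition built into the space.
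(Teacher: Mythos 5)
Your proposal is correct and follows essentially the same route as the paper: the paper likewise obtains coercivity of $\mathbb{A}_k$ by dropping the non-negative boundary term and using $1/n\ge n_{\text{max}}^{-1}$ together with the norm $\|\Delta\cdot\|_{L^2(D)}$, gets self-adjointness from the Hermitian forms, and gets compactness of $\mathbb{B}$ from the compact imbedding of $H^2(D)$ into $H^1(D)$. Your added details (the Poincar\'e argument upgrading non-negativity of $\mathbb{B}$ to positivity, and the remark on the trace $\partial_\nu w\in H^{1/2}(\partial D)$) are correct refinements of what the paper leaves implicit.
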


Just as in the previous section the above result gives that the operators  $\mathbb{A}_k$ and $\mathbb{B}$ satisfy the assumptions of Theorem 2.3 of \cite{chtevexist}. Therefore, to prove the existence result we need to show that the operator $\mathbb{A}_k -k^2 \mathbb{B}$ is positive on $H^2(D) \cap H^1_0(D)$ for some $k_1>0$ and is non-positive  on a subspace of $H^2(D) \cap H^1_0(D)$ for some $k_2$.

\begin{theorem}  \label{zi-te-exists}
{\color{black}Assume that $n(x) \in L^{\infty} (D)$ and $\eta(x) \in L^{\infty} (\partial D)$ are uniformly positive. Then there exists infinitely many real zero-index transmission eigenvalues.}
\end{theorem}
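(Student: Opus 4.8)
The plan is to verify, for the operators $\mathbb{A}_k$ and $\mathbb{B}$ defined in \eqref{def-Ak}--\eqref{def-B} on the Hilbert space $H^2(D)\cap H^1_0(D)$ with norm $\|\Delta\cdot\|_{L^2(D)}$, the hypotheses of Theorem 2.3 of \cite{chtevexist}, exactly as was done for the electromagnetic problem in Theorem \ref{exists}. The preceding theorem already gives that $\mathbb{A}_k$ is self-adjoint, coercive and analytic in $k$ and that $\mathbb{B}$ is self-adjoint, positive and compact, and by construction $k$ is a real zero-index transmission eigenvalue precisely when $\lambda_j(k)-k^2=0$ for some $j$, where the $\lambda_j(k)$ are the generalized eigenvalues of $\mathbb{A}_k w=\lambda\mathbb{B}w$. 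It remains to produce (i) a wave number $k_1>0$ for which $\mathbb{A}_{k_1}-k_1^2\mathbb{B}$ is positive on all of $H^2(D)\cap H^1_0(D)$, and (ii) for each $m\in\N$ a wave number $k_2=k_2(m)>k_1$ together with an $m$-dimensional subspace of $H^2(D)\cap H^1_0(D)$ on which $\mathbb{A}_{k_2}-k_2^2\mathbb{B}$ is non-positive; Theorem 2.3 of \cite{chtevexist} then yields at least $m$ real eigenvalues in $[k_1,k_2]$, and since $m$ is arbitrary there are infinitely many.

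For (i) I would combine the coercivity bound $(\mathbb{A}_k w,w)_{H^2(D)}\ge n_{\text{max}}^{-1}\|\Delta w\|^2_{L^2(D)}$ recorded above with the inequality $\|\grad w\|^2_{L^2(D)}\le C\|\Delta w\|^2_{L^2(D)}$ that follows from the elliptic regularity estimate used to define the inner product on $H^2(D)\cap H^1_0(D)$. Since $(\mathbb{B}w,w)_{H^2(D)}=\|\grad w\|^2_{L^2(D)}$, this gives
\[
\big((\mathbb{A}_k-k^2\mathbb{B})w,w\big)_{H^2(D)}\ \ge\ \big(n_{\text{max}}^{-1}-Ck^2\big)\,\|\Delta w\|^2_{L^2(D)},
\]
so $\mathbb{A}_k-k^2\mathbb{B}$ is positive for every $0<k<k_1:=(C\,n_{\text{max}})^{-1/2}$.

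For (ii), the substantive step, I would adapt the ball-packing device from the proof of Theorem \ref{exists}. Fix $\ep>0$, choose pairwise disjoint balls $B_j=B(x_j,\ep)$, $j=1,\dots,M(\ep)$, with $\overline{B_j}\subset D$ and $M(\ep)\to\infty$ as $\ep\to0$, and on each $B_j$ let $\psi_j\in H^2_0(B_j)$ be a first plate buckling eigenfunction, i.e. a minimizer of $\int_{B_j}|\Delta\psi|^2\,\dif x\big/\int_{B_j}|\grad\psi|^2\,\dif x$ over $H^2_0(B_j)\setminus\{0\}$, with minimal value $\Lambda_\ep=\Lambda_1\,\ep^{-2}$ by rescaling ($\Lambda_1$ the first buckling eigenvalue of the unit ball); this is the constant-$n$, $\eta=0$ reduction of \eqref{zi-teprob3}, namely the plate buckling problem of \cite{zi-te}. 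Extending each $\psi_j$ by zero gives $\psi_j\in H^2_0(D)\subset H^2(D)\cap H^1_0(D)$ with support in $\overline{B_j}$, so $\partial_\nu\psi_j=0$ on $\partial D$ and all boundary terms in $\mathbb{A}_k$ vanish, while the disjoint supports make $\{\psi_1,\dots,\psi_{M(\ep)}\}$ orthogonal, so $W_\ep:=\mathrm{span}\{\psi_1,\dots,\psi_{M(\ep)}\}$ is $M(\ep)$-dimensional. For $w=\sum_j c_j\psi_j\in W_\ep$, the disjoint supports, the bound $1/n\le 1/n_{\text{min}}$ and the defining property of the $\psi_j$ give
\[
\big((\mathbb{A}_k-k^2\mathbb{B})w,w\big)_{H^2(D)}=\sum_j|c_j|^2\Big(\int_{B_j}\tfrac{1}{n}|\Delta\psi_j|^2\,\dif x-k^2\int_{B_j}|\grad\psi_j|^2\,\dif x\Big)\le\Big(\tfrac{\Lambda_\ep}{n_{\text{min}}}-k^2\Big)\sum_j|c_j|^2\int_{B_j}|\grad\psi_j|^2\,\dif x,
\]
so taking $k_2=k_\ep$ with $k_\ep^2:=\Lambda_\ep/n_{\text{min}}$ makes $\mathbb{A}_{k_\ep}-k_\ep^2\mathbb{B}$ non-positive on $W_\ep$; and since $\Lambda_\ep\to\infty$ we have $k_\ep>k_1$ once $\ep$ is small. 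Then Theorem 2.3 of \cite{chtevexist} applied with $k_1$ from (i) and $(k_\ep,W_\ep)$ produces at least $M(\ep)$ real zero-index transmission eigenvalues in $[k_1,k_\ep]$; letting $\ep\to0$ gives infinitely many, and together with the discreteness established above they form a sequence accumulating only at $+\infty$. I expect (ii) to be the main obstacle, specifically recognizing that the threshold $k_\ep$ must be allowed to blow up as $\ep\to0$ — this is not a defect, since Theorem 2.3 of \cite{chtevexist} only needs arbitrarily large finite-dimensional subspaces of non-positivity rather than one infinite-dimensional subspace at a fixed wave number; indeed the latter cannot hold here because the Rayleigh quotient $\int_D\tfrac{1}{n}|\Delta w|^2\,\dif x\big/\int_D|\grad w|^2\,\dif x$ over $H^2_0(D)$ has discrete spectrum tending to $+\infty$.
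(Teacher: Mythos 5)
Your proposal is correct and follows essentially the same route as the paper: positivity of $\mathbb{A}_k-k^2\mathbb{B}$ for small $k$ via the coercivity bound and the elliptic regularity constant, and non-positivity on the $M(\ep)$-dimensional span of plate buckling eigenfunctions of disjoint balls (with constant coefficient $n_{\text{min}}$, extended by zero so the boundary term drops out), followed by Theorem 2.3 of \cite{chtevexist}. The extra remarks on the scaling $\Lambda_\ep=\Lambda_1\ep^{-2}$ and on why the threshold $k_\ep$ must grow are accurate but not needed for the argument.
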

\begin{proof}
We begin by proving that for $k>0$ sufficiently small $\mathbb{A}_k -k^2 \mathbb{B}$ is a positive operator. Indeed, notice that by \eqref{def-Ak}--\eqref{def-B} we can conclude that 
$$\big( \mathbb{A}_kw-k^2\mathbb{B} w, w \big)_{H^2(D)} \geq \left({n^{-1}_{\text{max}}} - k^2 C\right) \|\Delta w \|^2_{L^2(D)}.$$
Here $C$ is the constant where $\|w\|^2_{H^2(D)} \leq C \|\Delta w\|^2_{L^2(D)}$ given by the $H^2$ elliptic regularity estimate and Wellposedness of the Poisson problem. This gives positivity for all $k>0$ such that $k^2< {n^{-1}_{\text{max}}}C^{-1}$.

Now we construct a subspace and find a value of $k>0$ where the operator is non-positive. To this end, let $B_j=B(x_j , \ep):=\{ x \in \R^3 : |x-x_j | <  \ep \}$ where $x_j \in D$ and $\ep>0$. Define $M(\ep)$ the supremum of the number of disjoint balls $B_j$ such that $\overline{B_j} \subset D$. It is well known that there are infinitely many values $k_\ep$ that correspond to the plate buckling eigenvalue problem 
\begin{align*}
\Delta \frac{1}{n_{\text{min}}} \Delta w_j = - k_\ep^2 \Delta w_j \quad \textrm{ in } \,  B_j \quad \text{ where } \,\, w_j \in H^2_0( B_j ).
\end{align*}
{\color{black}where the Hilbert space $H^2_0( B_j ) =  \{ u \in H^2(B_j) \, : \, u = \partial_\nu u =0 \,\, \text{ on } \,\, \partial B_j\}.$}
We extend $w_j$ into $D$ such that $w_j = 0$ in $D \setminus \overline{B_j}$. It is clear that $X_{M(\ep)} = \text{ span}\{ {w}_1 , {w}_2 , \cdots,  {w}_{M(\ep)}  \}$ is a $M(\ep)$ dimensional subspace of $H^2(D) \cap H^1_0(D)$ since the basis functions are orthogonal. Using the variational formulation of the plate buckling eigenvalue problem we have that 
\begin{align*}
\big( \mathbb{A}_{k_\ep}w_j -k_\ep^2\mathbb{B} w_j, w_j  \big)_{H^2(D)}  &= \int\limits_{D} \frac{1}{n} |\Delta w_j|^2 \ -k_\ep^2 |\grad w_j|^2  \, \text{d}x + {\color{black}\int\limits_{\partial D} \frac{k_\ep^2}{\eta} |{\partial_\nu w_j}|^2\, \text{d}s}\\
										    &\leq \int\limits_{B_j} \frac{1}{n_{\text{min}}} |\Delta w_j|^2 \ -k_\ep^2 |\grad w_j|^2  \, \text{d}x =0.
\end{align*} 
It is clear that do to the disjoint support that {\color{black} $\big( \mathbb{A}_{k_{\ep}}w_j-k_\ep^2\mathbb{B} w_j, w_i \big)_{H^2(D)} = 0$ for all $i \neq j$ which implies that }
$$\big( \mathbb{A}_{k_{\ep}}w-k_\ep^2\mathbb{B} w, w \big)_{H^2(D)} \leq 0 \quad \text{ for all } \quad w \in X_{M(\ep)}$$
and since $M(\ep) \to \infty$ as $\ep \to 0$ we have that there are infinitely many real zero-index transmission eigenvalues by Theorem 2.3 of \cite{chtevexist}. 
\end{proof}

\subsection{{\color{black}Dependence} on the parameters}
We now turn our attention to studying how the zero-index transmission eigenvalues depend on the coefficients $n$ and $\eta$. Just as in the previous section we will show that the eigenvalues are monotone with respect to the coefficients and using the monotonicity result we will then analyze the case when $\eta$ tends to either zero or infinity. To prove that the zero-index transmission eigenvalue depend monotonically on the coefficients we will proceed just as in the case for the transmission eigenvalues. Therefore, notice that the zero-index transmission eigenvalues $k=k(n ,\eta)$ satisfy the equation 
\begin{equation}
\lambda_j (k;n ,\eta)-k^2=0 \label{zi-teveq}
\end{equation}
where $\lambda_j $ is the $j$-th generalized eigenvalue for the operators $\mathbb{A}_k$ and $\mathbb{B}$ which are defined by equations \eqref{def-Ak}--\eqref{def-B} such that 
$$ \mathbb{A}_k w= \lambda_j \mathbb{B} w. $$
By the variational definition in \eqref{def-Ak} it is clear that $\lambda_j(k)$ depends continuously on $k \in (0,\infty)$ and satisfy the min-max principle:
\begin{align*}
\lambda_j(k;n ,\eta)= \min\limits_{ U \in \mathcal{U}_j} \max\limits_{  w \in U\setminus \{ 0\} }  \frac{ (\mathbb{A}_k w,w)_{H^2(D)} }{  (\mathbb{B}w,w)_{H^2(D)} }  \,\, \text{ for } \, \, 0<n_{\text{min}}
\end{align*}
where $\mathcal{U}_j$ is the set of all $j$-dimensional subspaces of $H^2(D) \cap H^1_0(D)$. Following in the same way as in Theorem \ref{mono} we have the following result. 

\begin{theorem}\label{mono2}
{\color{black}Assume that for $\ell=1,2$ that $n_\ell$ and $\eta_\ell$ real-valued uniformly positive definite function such that $n_1(x) \leq n_2(x)$  for a.e. $x \in D$ and $\eta_1(x)  \leq \eta_2(x)$ for a.e. $x \in \partial D$.
Then $k_j(n_2 , \eta_2) \leq k_j(n_1 , \eta_1)$ where $k_j$ denotes the smallest solution to \eqref{zi-teveq} for any $j \in \N$. Moreover, if the inequalities for the parameters are strict, then the first zero-index transmission eigenvalue is strictly monotone. }
\end{theorem}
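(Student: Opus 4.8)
The plan is to follow the proof of Theorem~\ref{mono} almost verbatim, now with the simplifications that the operator $\mathbb{B}$ of \eqref{def-B} does not depend on the coefficients and that no contrast-sign hypothesis on $n$ is required. First I would record the pointwise consequences of the ordering hypotheses: $n_1 \le n_2$ a.e.\ in $D$ gives $1/n_2 \le 1/n_1$ a.e.\ in $D$, and $\eta_1 \le \eta_2$ a.e.\ on $\partial D$ gives $1/\eta_2 \le 1/\eta_1$ a.e.\ on $\partial D$. Since any real $k \neq 0$ has $k^2 > 0$, the definition \eqref{def-Ak} then yields
$$(\mathbb{A}_k w, w)_{H^2(D)}\big|_{n_2,\eta_2} \;\le\; (\mathbb{A}_k w, w)_{H^2(D)}\big|_{n_1,\eta_1} \qquad \text{for all } w \in H^2(D)\cap H^1_0(D).$$
Because $(\mathbb{B}w,w)_{H^2(D)} = \|\grad w\|^2_{L^2(D)} > 0$ for every $w \neq 0$ in $H^2(D)\cap H^1_0(D)$, so that $\mathcal{U}_j$ consists of all $j$-dimensional subspaces and no null-space complication arises, dividing by $(\mathbb{B}w,w)_{H^2(D)}$ and applying the min-max principle gives $\lambda_j(k;n_2,\eta_2) \le \lambda_j(k;n_1,\eta_1)$ for every $j \in \N$ and every $k>0$.

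Next I would transfer this inequality between the generalized-eigenvalue curves to the transmission eigenvalues. The positivity of $\mathbb{A}_k - k^2\mathbb{B}$ for small $k$ shown in the proof of Theorem~\ref{zi-te-exists}, where $(\mathbb{A}_k w - k^2\mathbb{B}w, w)_{H^2(D)} \ge (n^{-1}_{\text{max}} - k^2 C)\|\Delta w\|^2_{L^2(D)}$, gives $\lambda_j(k;n_2,\eta_2) > k^2$ for all $j$ and all sufficiently small $k>0$. Setting $k_1 := k_j(n_1,\eta_1)$, the smallest root of \eqref{zi-teveq} for $(n_1,\eta_1)$, we have $\lambda_j(k_1;n_2,\eta_2) - k_1^2 \le \lambda_j(k_1;n_1,\eta_1) - k_1^2 = 0$. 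Continuity of $k \mapsto \lambda_j(k)$ on $(0,\infty)$ together with the Intermediate Value Theorem then forces $\lambda_j(\cdot\,;n_2,\eta_2) - (\cdot)^2$ to vanish somewhere in $(0,k_1]$, so its smallest root satisfies $k_j(n_2,\eta_2) \le k_1 = k_j(n_1,\eta_1)$.

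For the strict statement I would repeat the second half of the proof of Theorem~\ref{mono}: strict coefficient inequalities make the displayed form inequality strict for every $w \neq 0$, so taking $w_1$ to be the eigenfunction associated with $k_1(n_1,\eta_1)$, which is the optimizer of the min-max ratio at that value of $k$, gives $\lambda_1(k;n_2,\eta_2) < \lambda_1(k;n_1,\eta_1)$ and in particular the strict inequality $\lambda_1(k_1;n_2,\eta_2) - k_1^2 < 0$ at $k = k_1(n_1,\eta_1)$; the Intermediate Value Theorem argument then yields $k_1(n_2,\eta_2) < k_1(n_1,\eta_1)$. I do not anticipate a genuine obstacle here; the only points meriting care are checking that the sign $k^2 > 0$ is exactly what makes the $\eta$-boundary term monotone in the correct direction, confirming that $\mathrm{Null}(\mathbb{B})$ is trivial so that the min-max class $\mathcal{U}_j$ needs no modification, and invoking the already-stated fact that the min-max optimizer at a transmission eigenvalue is the corresponding eigenfunction.
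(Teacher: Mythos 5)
Your proposal is correct and takes essentially the same route as the paper, which itself omits a detailed argument and simply states that the result follows ``in the same way as in Theorem~\ref{mono}'': pointwise monotonicity of $1/n$ and $1/\eta$ gives ordering of the forms $\mathcal{A}_k$, the min-max principle orders the curves $\lambda_j(k)$, and the small-$k$ positivity plus the Intermediate Value Theorem transfers this to the smallest roots of $\lambda_j(k)-k^2=0$, with the eigenfunction-as-optimizer argument handling the strict case. Your added observations --- that $k^2>0$ is what makes the boundary term monotone in the right direction and that $\mathrm{Null}(\mathbb{B})$ is trivial on $H^2(D)\cap H^1_0(D)$ so $\mathcal{U}_j$ needs no modification --- are exactly the simplifications the paper implicitly relies on.
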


Similarly, using the above monotonicity result and the variational formulation \eqref{zi-varform} one can argue just as in \cite{te-cbc2} for the scalar transmission eigenvalues with a coated boundary condition that as $\eta_{\text{max}} \to 0$ the zero-index transmission eigenvalues and eigenfunctions will converge to the classical plate buckling eigenvalues and eigenfunctions. The monotonicity also gives that information about the refractive index can be recovered from the eigenvalues assuming $\eta$ is known. Clearly, a constant refractive index can be uniquely recovered from the eigenvalues and for a non-constant refractive index can be estimated the the constant that best approximates its first zero-index transmission eigenvalues. We now state the convergence result as the boundary parameter $\eta$ tends to zero. 

\begin{theorem} \label{conv-te-3}
There are infinitely many $(k_\eta , w_\eta) \in \R^+ \times H^2(D) \cap H^1_0(D)$ eigenpair satisfying \eqref{zi-teprob3} for $\eta >0$ where as $\eta_{\text{max}} \to 0$ there is a subsequence such that $k_\eta \rightarrow k_0$ and $w_\eta \to w_0$ in $H^2(D)$ with $(k_0 , w_0) \in \R^+ \times H^2_0(D)$ being an eigenpair satisfying \eqref{zi-teprob3} with $\eta = 0$ (i.e. the plate buckling eigenvalue problem). 
\end{theorem}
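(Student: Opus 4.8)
The plan is to follow the strategy of Theorem \ref{conv-te-1} and of \cite{te-cbc2}: for each $j\in\N$ take the eigenpair produced by Theorem \ref{zi-te-exists}, extract weak limits, and identify the limiting relation as the plate buckling problem. Fix $j$, let $k_\eta := k_j(n,\eta)$ be the $j$-th zero-index transmission eigenvalue with eigenfunction $w_\eta \in H^2(D)\cap H^1_0(D)$, and normalize by $\|\Delta w_\eta\|_{L^2(D)} = 1$. The first step is a two-sided bound $0 < c \le k_\eta \le C_0$ uniform in $\eta$. The lower bound follows from the coercivity estimate $\big(\mathbb{A}_k w - k^2 \mathbb{B} w, w\big)_{H^2(D)} \ge \big(n_{\text{max}}^{-1} - k^2 C\big)\|\Delta w\|^2_{L^2(D)}$ established in the existence proof, which is independent of $\eta$ precisely because the boundary term in $\mathbb{A}_k$ is nonnegative. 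For the upper bound I would restrict the min--max quotient for $\lambda_j(k;n,\eta)$ to $j$-dimensional subspaces of $H^2_0(D)$, on which the boundary term in $\mathcal{A}_k$ drops out; this shows $\lambda_j(k;n,\eta)$ is bounded above by the corresponding plate buckling min--max value for every $k>0$, so the smallest root of $\lambda_j(\cdot;n,\eta) = k^2$ does not exceed the $j$-th plate buckling eigenvalue (using positivity of $\mathbb{A}_k-k^2\mathbb{B}$ for small $k$ and the intermediate value theorem). Theorem \ref{mono2} moreover upgrades this to genuine monotone convergence when $\eta$ decreases pointwise; in general one keeps a subsequence along which $k_\eta \to k_0 \in (0,\infty)$.

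Next I would pass to the limit in the eigenfunctions. Testing \eqref{zi-varform} (equivalently $\mathbb{A}_{k_\eta} w_\eta = k_\eta^2 \mathbb{B} w_\eta$) with $\varphi = w_\eta$ gives
\[
\int_D \frac{1}{n}|\Delta w_\eta|^2 \, \dif x + k_\eta^2 \int_{\partial D} \frac{1}{\eta}|\partial_\nu w_\eta|^2\, \dif s = k_\eta^2 \|\nabla w_\eta\|^2_{L^2(D)}.
\]
Since the first term is bounded below by $n_{\text{max}}^{-1}$, the right-hand side is bounded (because $\|\nabla w_\eta\|_{L^2(D)} \le \|w_\eta\|_{H^2(D)} \le C$), and $k_\eta \ge c$, the boundary term is bounded, which forces $\|\partial_\nu w_\eta\|^2_{L^2(\partial D)} \le C\, \eta_{\text{max}}/c^2 \to 0$. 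As $\|w_\eta\|_{H^2(D)}$ is bounded, a subsequence satisfies $w_\eta \weakc w_0$ in $H^2(D)$; the compact embeddings $H^2(D) \hookrightarrow H^1(D)$ and $H^{1/2}(\partial D) \hookrightarrow L^2(\partial D)$ give $\nabla w_\eta \to \nabla w_0$ in $L^2(D)$ and $\partial_\nu w_\eta \to \partial_\nu w_0$ in $L^2(\partial D)$, whence $\partial_\nu w_0 = 0$; together with $w_0 \in H^1_0(D)$ this yields $w_0 \in H^2_0(D)$. Passing to the limit in \eqref{zi-varform} against any fixed $\varphi \in H^2_0(D)$ (whose boundary term vanishes), using $\Delta w_\eta \weakc \Delta w_0$ in $L^2(D)$ on the volume term and the strong $H^1$ convergence on the gradient term, yields
\[
\int_D \frac{1}{n} \Delta w_0 \Delta \ov{\varphi}\, \dif x = k_0^2 \int_D \nabla w_0 \cdot \nabla \ov{\varphi}\, \dif x \qquad \text{for all } \varphi \in H^2_0(D),
\]
which is exactly the weak form of \eqref{zi-teprob3} with $\eta = 0$.

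The main obstacle, and the last step, is to show $w_0 \neq 0$, i.e. to upgrade $\Delta w_\eta \weakc \Delta w_0$ to strong convergence so that the normalization is inherited in the limit. For this I would combine the energy identity above with the limiting equation: discarding the nonnegative boundary term and using $\|\nabla w_\eta\|^2_{L^2(D)} \to \|\nabla w_0\|^2_{L^2(D)}$ together with $k_\eta \to k_0$ gives $\limsup_\eta \int_D \frac1n |\Delta w_\eta|^2 \le k_0^2\|\nabla w_0\|^2_{L^2(D)} = \int_D \frac1n |\Delta w_0|^2$, the last equality by testing the limit equation with $\varphi = w_0$; weak lower semicontinuity of $v \mapsto \int_D \frac1n |v|^2$ on $L^2(D)$ supplies the reverse inequality for the $\liminf$. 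Hence $\int_D \frac1n|\Delta w_\eta|^2 \to \int_D \frac1n|\Delta w_0|^2$, and convergence of norms together with weak convergence in the Hilbert space $L^2(D)$ equipped with the equivalent weighted inner product $\int_D \frac1n\, \cdot\, \dif x$ gives $\Delta w_\eta \to \Delta w_0$ strongly in $L^2(D)$. Therefore $\|\Delta w_0\|_{L^2(D)} = 1$, so $(k_0,w_0)$ is a nontrivial eigenpair of the plate buckling problem, and since $\|\Delta\, \cdot\,\|_{L^2(D)}$ is the norm on $H^2(D)\cap H^1_0(D)$ we also obtain $w_\eta \to w_0$ in $H^2(D)$. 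Letting $j$ range over $\N$ produces the asserted infinitely many convergent eigenpairs.
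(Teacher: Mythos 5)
Your proposal is correct, and it fills in details the paper deliberately omits: the paper gives no proof of this theorem, deferring to the argument in \cite{te-cbc2}, and your argument is exactly the strategy the paper carries out in detail for the companion $\eta_{\text{min}} \to \infty$ results (uniform two-sided bounds on $k_\eta$ via coercivity and the min--max principle, the energy identity forcing $\|\partial_\nu w_\eta\|_{L^2(\partial D)} \to 0$, weak compactness plus compact embeddings to identify the limit, and a norm-convergence argument to preserve the normalization and upgrade to strong $H^2$ convergence). The only point worth flagging is minor: your upper bound $k_j(n,\eta) \le k_{0,j}$ via restricting the min--max to subspaces of $H^2_0(D)$ is a clean justification of what the paper asserts only through monotonicity, and it is needed here since Theorem \ref{mono2} alone bounds $k_\eta$ from below, not above, as $\eta$ decreases.
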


We now analyze the zero-index transmission eigenvalue problem as ${\color{black}\eta_{\text{min}} \to \infty}$. Just as in the previous section we will see that the zero-index transmission eigenvalues and eigenfunctions will have limits that converge to a corresponding eigenpair. Notice that Theorem \ref{mono2} and the proof of Theorem \ref{zi-te-exists} implies that there exists infinitely many eigenvalues $k_\eta$ that for some $\delta >0$ satisfy $\delta \leq k_\eta \leq k_0$ where $k_0$ is a plate buckling eigenvalue. This implies that as ${\color{black} \eta_{\text{min}} \to \infty}$ there must be a limit point denoted $k_\infty >0$.  

Now assume that the corresponding eigenfunction $w_\eta$ is normalized such that $\| w_\eta \|_{H^1(D) } = 1$. Using the variational formulation \eqref{zi-varform} we can conclude that 
$$\frac{1}{n_{\text{min}}} \| \Delta w_\eta \|^2_{L^2(D)} \leq  k^2_\eta \| \grad w_\eta \|^2_{{ L}^2(D) }$$
which gives that $w_\eta \weakc w_\infty$ as  ${\color{black} \eta_{\text{min}} \to \infty}$ to some $w_\infty \in H^2(D) \cap H^1_0(D)$. The compact imbedding of $H^2(D)$ into $H^1(D)$ gives that $w_\eta \to w_\infty$ in $H^1(D)$ which implies that $w_\infty$ has unit norm in $H^1(D)$. Now let $\widetilde{u}_\eta$ be the zero-index transmission eigenfunction determined by solving 
$$\Delta \widetilde{u}_\eta=0  \quad \textrm{ in } \,  D \quad \text{ and } \quad {\color{black} \widetilde{u}_\eta = \frac{1}{\eta} {\partial_\nu w_\eta} } \quad \textrm{ on } \partial D.$$
Since $w_\eta$ is bounded in $H^2(D) \cap H^1_0(D)$ the Trace Theorem implies that ${\partial_\nu w_\eta}$ is bounded in $H^{1/2}(\partial D)$. Therefore, by the well-posedness estimate for the Dirichlet problem we have that $\| \widetilde{u}_\eta \|_{H^1(D) } \leq C \eta^{-1} _{\text{min}} \to 0$ as ${\color{black} \eta_{\text{min}} \to \infty}$. Noticing that 
$$\Delta w_\eta +k_\eta^2 n w_\eta= -k_\eta^2 n \widetilde{u}_\eta \quad \textrm{ in } \,  D$$
we have that 
$$\Delta w_\infty +k_\infty^2 n w_\infty= 0 \quad \textrm{ in } \,  D \quad \text{ and } \quad \| w_\infty \|_{H^1(D) } = 1.$$
This gives that $k^2_\infty$ is a `Modified' Dirichlet eigenvalue for the Laplacian and $w_\infty$ is the corresponding `Modified' Dirichlet eigenfunction. We will now show that the convergence of the eigenfunctions is strong in $H^2(D) \cap H^1_0(D)$. Indeed, notice that we have 
$$\Delta (w_\infty - w_\eta) = (k^2_\eta -k^2_\infty) n w_\infty + k^2_\eta n(w_\eta - w_\infty ) + k^2_\eta n \widetilde{u}_\eta$$
which implies that $\| \Delta (w_\infty - w_\eta) \|_{{ L}^2(D) } \to 0$ as ${\color{black} \eta_{\text{min}} \to \infty}$, proving the convergence. From the above analysis we have the following convergence result.

\begin{theorem} \label{conv-te-4}
There are infinitely many  $(k_\eta , w_\eta) \in \R^+ \times H^2(D) \cap H^1_0(D)$ eigenpairs satisfying \eqref{zi-teprob3} where as $\eta_{\text{min}} \to \infty$ there is a subsequence such that $k_\eta \rightarrow k_\infty$ where $k^2_\infty$ is a `Modified' Dirichlet eigenvalue and  $w_\eta \to w_\infty$ the corresponding `Modified' Dirichlet eigenfunction. 
\end{theorem}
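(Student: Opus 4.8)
The plan is to reassemble, as a clean proof, the ingredients already developed in this section: the existence result (Theorem~\ref{zi-te-exists}), the monotonicity (Theorem~\ref{mono2}), and the equivalence between \eqref{zi-teprob1}--\eqref{zi-teprob2} and \eqref{zi-teprob3}. First I would fix a sequence of boundary parameters with $\eta_{\text{min}}\to\infty$. By monotonicity the $j$-th eigenvalue $k_\eta$ is nonincreasing in $\eta$, while the positivity estimate in the proof of Theorem~\ref{zi-te-exists} produces a fixed $\delta>0$ with $\delta\le k_\eta\le k_0$, where $k_0$ is the corresponding plate buckling eigenvalue; a bounded monotone sequence converges, so along a subsequence $k_\eta\to k_\infty\in[\delta,k_0]$, in particular $k_\infty>0$, and this works for each fixed index $j$. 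Normalizing $\|w_\eta\|_{H^1(D)}=1$ and testing \eqref{zi-varform} against $w_\eta$, the boundary term is nonnegative, so $n_{\text{min}}^{-1}\|\Delta w_\eta\|^2_{L^2(D)}\le k_\eta^2\|\grad w_\eta\|^2_{L^2(D)}\le k_0^2$; thus $w_\eta$ is bounded in $H^2(D)\cap H^1_0(D)$ and, passing to a further subsequence, $w_\eta\weakc w_\infty$ there, while the compact embedding $H^2(D)\hookrightarrow H^1(D)$ makes the convergence strong in $H^1(D)$. Hence $\|w_\infty\|_{H^1(D)}=1$, so $w_\infty\neq 0$, and $w_\infty\in H^1_0(D)$ since the zero trace is stable under weak $H^1$ limits.

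The core step is identifying the limiting equation. Using the equivalence I recover $\widetilde{u}_\eta\in H^1(D)$ by solving $\Delta\widetilde{u}_\eta=0$ in $D$ with $\widetilde{u}_\eta=\eta^{-1}\partial_\nu w_\eta$ on $\partial D$; the Trace Theorem keeps $\partial_\nu w_\eta$ bounded in $H^{1/2}(\partial D)$ uniformly in $\eta$, so well-posedness of the Dirichlet problem gives $\|\widetilde{u}_\eta\|_{H^1(D)}\le C\eta_{\text{min}}^{-1}\to 0$. Passing to the limit in $\Delta w_\eta+k_\eta^2 n\,w_\eta=-k_\eta^2 n\,\widetilde{u}_\eta$ (using $k_\eta\to k_\infty$, $w_\eta\weakc w_\infty$, $\widetilde{u}_\eta\to 0$) yields $\Delta w_\infty+k_\infty^2 n\,w_\infty=0$ in $D$ distributionally, which together with $w_\infty\in H^1_0(D)\setminus\{0\}$ makes $k_\infty^2$ a `Modified' Dirichlet eigenvalue with eigenfunction $w_\infty$. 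For the strong convergence I would write $\Delta(w_\infty-w_\eta)=(k_\eta^2-k_\infty^2)n\,w_\infty+k_\eta^2 n(w_\eta-w_\infty)+k_\eta^2 n\,\widetilde{u}_\eta$; each term tends to $0$ in $L^2(D)$ (the first by $k_\eta\to k_\infty$, the second by strong $H^1$---hence $L^2$---convergence of $w_\eta$, the third by $\widetilde{u}_\eta\to 0$), and the equivalence of $\|\Delta\cdot\|_{L^2(D)}$ with the $H^2(D)\cap H^1_0(D)$ norm upgrades this to $w_\eta\to w_\infty$ strongly in $H^2(D)\cap H^1_0(D)$.

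The step I expect to require the most care is the uniform decay of the reconstructed harmonic field $\widetilde{u}_\eta$: one must verify that the Dirichlet datum $\eta^{-1}\partial_\nu w_\eta$ is bounded in $H^{1/2}(\partial D)$ uniformly in $\eta$---which relies on the $H^2$ bound on $w_\eta$ (itself uniform thanks to the normalization together with $\delta\le k_\eta\le k_0$) and a trace estimate---and that it is precisely this $H^{1/2}$-norm that the Dirichlet solvability estimate controls, so that the $\eta^{-1}$ prefactor genuinely forces $\widetilde{u}_\eta\to 0$ in $H^1(D)$. Once this is in hand, every remaining passage to the limit is routine, and the statement follows by collecting the limiting eigenpairs over all indices $j$.
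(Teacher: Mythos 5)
Your proposal follows essentially the same route as the paper: monotonicity plus the positivity estimate give $\delta\le k_\eta\le k_0$ and hence a convergent subsequence, the normalization $\|w_\eta\|_{H^1(D)}=1$ with the variational identity gives a uniform $H^2$ bound and a nonzero weak limit, the reconstructed harmonic field $\widetilde u_\eta$ with Dirichlet datum $\eta^{-1}\partial_\nu w_\eta$ decays like $\eta_{\text{min}}^{-1}$ by the trace estimate, and the same decomposition of $\Delta(w_\infty-w_\eta)$ upgrades to strong $H^2$ convergence. The argument is correct (modulo the same harmless slip as in the paper, where the lower bound on $\int_D n^{-1}|\Delta w_\eta|^2$ should carry $n_{\text{max}}^{-1}$ rather than $n_{\text{min}}^{-1}$), so there is nothing further to add.
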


\subsection{Numerical examples for the unit ball}
In this section, we provided some numerical examples for the zero-index transmission eigenvalue problem. The domain $D$ is assumed to be the unit ball in $\R^2$ where we use separation of variables for constant material parameters and a Galerkin method for variable material parameters to approximate the eigenvalues. We will examine the monotonicity of the eigenvalues with respect to the material parameters as well as the convergence results in Theorems \ref{conv-te-3} and \ref{conv-te-4} with respect to $\eta$. Using the monotonicity result for the refractive index $n$ one can estimate the refractive index (see for e.g. \cite{spectraltev},  \cite{te-homog}, and \cite{ghtevpaper}). The Galerkin method we employ here is similar to the work done in \cite{gpinvtev} where we pick a finite dimensional subspace of $H^2(D) \cap H^1_0(D)$ associated with a differential operator that becomes dense as the dimension tends to infinity.

All of the numerical examples presented are done using MATLAB 2018a on an iMac with a 4.2GHz Intel Core i7 processor with 8GB of memory. We now assume that both $n$ and $\eta$ are constants. Therefore, since $D$ is the unit ball we have that the eigenfunctions $u$ and $\widetilde{u}$ satisfying the PDEs in \eqref{zi-teprob1} have the series representation 
$$ u(r,\theta)=\sum_{|m|=0}^\infty \alpha_m J_{|m|}(k\sqrt{n}r) \text{e}^{\text{i}m\theta}  \quad \text{ and } \quad \widetilde{u}(r,\theta)=\sum_{|m|=0}^\infty \beta_m r^{|m|} \text{e}^{\text{i}m\theta}$$
for $m \in \Z$ where $J_{|m|}$ denotes the Bessel function of the first kind. By applying the boundary conditions in \eqref{zi-teprob2} we obtain a homogeneous linear system of equations for $\alpha_m$ and $\beta_m$ which has a non-trivial solution if and only if for some $m$ we have $d_m(k)=0$ where we define 
$$d_m(k) = k\sqrt{n}J_{|m|}' \big(k\sqrt{n}\big) -\big(\eta +|m|\big)J_{|m|}\big(k\sqrt{n}\big) \quad \text{ for all } \quad m \in \Z.$$
This gives that we can compute the zero-index transmission eigenvalues by finding the roots of the transcendental function $d_m(k)$. Notice that from the definition of $d_m(k)$ we have that at an eigenvalue 
$$J_{|m|} \big(k\sqrt{n}\big) = \frac{  k\sqrt{n} }{\big(\eta +|m|\big)} J_{|m|}'\big(k\sqrt{n}\big)$$
and we can clearly verify that $k_\eta$ will converge to a root of $J_{|m|} \big(t\sqrt{n}\big)$ as $\eta \to \infty$ which corresponds to the `Modified' Dirichlet eigenvalues. This give that all the zero-index transmission eigenvalues for the unit ball with constant coefficients will converge to a `Modified' Dirichlet eigenvalue as $\eta \to \infty$. We can compute the roots of $d_m(k)$ by using the built in root finding function in MATLAB `fzero' where the initial guess is determined by graphing the function. 

We now show the monotonicity and convergence of the zero-index transmission eigenvalues as $\eta$ tend to either zero or infinity. First we show that as $\eta \to \infty$ the zero-index transmission eigenvalue will converge to a `Modified' Dirichlet eigenvalue. In Table \ref{monotable1} we see the convergence of the first root of the function $d_0 (k)$ to the first `Modified' Dirichlet eigenvalue for $n=4$ which is given by $k_\infty = 1.2024$ as well as the monotonicity which gives that the zero-index transmission eigenvalues are decreasing with respect to $\eta$. To compute the rate of convergence (ROC) denoted by $p$ we assume that 
$$\big| k(\eta)-k_\infty \big| \approx C \eta^{-p} \quad \text{ which implies that } \quad  \log \big( \big| k(\eta)-k_\infty \big|  \big) \approx \log(C) - p \log( \eta )$$
for some constant $C$ independent of $\eta$. In our calculations we compute the convergence rate to be approximately first order. This validates the monotonicity and convergence discussed in the previous section. 
\begin{table}[ht!]
\centering  
\begin{tabular}{c c c } 
$\eta =10^{j}$  $\quad$   &  $k(\eta)$  $\quad$ & ROC $p$   \\ [0.5ex] 
\hline                  
$j=$0 $\quad$ \vline& 1.8499 $\quad$ \vline&  --  \\
$j=$1 $\quad$ \vline& 1.4435 $\quad$ \vline&  0.4290  \\ 
$j=$2 $\quad$ \vline& 1.2267 $\quad$ \vline&  0.9966  \\ 
$j=$3 $\quad$ \vline& 1.2048 $\quad$ \vline&  1.0054  \\ 
$j=$4 $\quad$ \vline& 1.2027 $\quad$ \vline&  0.9031  \\ 
\hline 
\end{tabular}
\caption{Monotonicity and Convergence as $\eta \to \infty$ for $n=4$ where the limit $k_\infty = 1.2024$.}\label{monotable1}
\end{table}
Likewise, in Table \ref{monotable2} we check the convergence of the first root of the function $d_0 (k)$ as $\eta \to 0$ where the limit $k_0$ is a plate buckling eigenvalue. We also wish to check the rate of convergence without having to compute the plate buckling eigenvalues. Therefore, we assume that 
$$\big| k(\eta)-k_0 \big| \approx C \eta^{p} \quad \text{ which implies that } \quad   \frac{\big| k(\eta)-k\big(\eta/2\big) \big|}{\big| k\big(\eta/2\big)-k\big(\eta/4\big) \big|} \approx 2^p $$
where again the constant $C$ is independent of $\eta$. Using this we can estimate convergence rate which our calculations shows to be first order. Also, in Table \ref{monotable2} we see that as $\eta$ decreases the zero-index transmission eigenvalue increasing just as the theory predicts.  
\begin{table}[ht!]
\centering  
\begin{tabular}{c c c } 
$\eta ={1}/{2^{j}} \quad $    &  $k(\eta)$ $\quad$  & ROC  $p$  \\ [0.5ex] 
\hline                  
$j=$0 $\quad$ \vline& 1.8499 $\quad$ \vline&  --  \\
$j=$1 $\quad$ \vline& 1.8830 $\quad$ \vline&  --  \\ 
$j=$2 $\quad$ \vline& 1.8995 $\quad$ \vline&  --  \\ 
$j=$3 $\quad$ \vline& 1.9077 $\quad$ \vline&  1.0044  \\ 
$j=$4 $\quad$ \vline& 1.9118 $\quad$ \vline&  1.0088  \\ 
$j=$5 $\quad$ \vline& 1.9130 $\quad$ \vline&  1.0356  \\ 
$j=$6 $\quad$ \vline& 1.9148 $\quad$ \vline&  1.0000  \\ 
\hline 
\end{tabular}
\caption{Monotonicity and Convergence as $\eta \to 0$ for $n=4$.}\label{monotable2}
\end{table}

Now we present a numerical method for computing the zero-index transmission eigenvalues. Our method is a based on a Spectral-Galerkin approximation method for variational formulation \eqref{zi-varform}. Therefore, we assume that the eigenfunction $w$ satisfying \eqref{zi-varform} has a series representation with respect to some basis of $H^2(D) \cap H^1_0(D)$ and the approximation is given by considering the truncated series in the finite dimensional subspace spanned by the first $M \in \N$ basis functions. 

In \cite{ghtevpaper} the standard transmission eigenvalue problem was considered with $\eta=0$ where the authors compute the eigenvalues using a Spectral-Galerkin method. For the standard transmission eigenvalue problem the eigenfunctions are in $H^2_0(D)$ and the basis functions used are the eigenfunctions for the Bilaplacian. Here we take our basis to be the Dirichlet eigenfunctions for the Laplacian. To this end, let $\phi_j$ be the Dirichlet eigenfunction such that 
\begin{align}
- \Delta \phi_i = \lambda_i \phi_i \,\, \text{ in } \,\, D \quad \text{ where }  \quad  \phi_i \in H^1_0(D) \label{dirichlet-eig}
\end{align}
where $\lambda_i >0$ is the corresponding Dirichlet eigenvalue. By elliptic regularity we have that $\phi_i \in H^2(D) \cap H^1_0(D)$  and we have that $\{ \phi_i \}_{i=1}^\infty$ forms an orthogonal set due to the $L^2(D)$ orthogonality. We now show that the Dirichlet eigenfunctions form a complete orthogonal set for $H^2(D) \cap H^1_0(D)$. 

\begin{theorem} \label{dense-eig}
Let $\phi_i$ satisfy \eqref{dirichlet-eig}  then the span of $\{ \phi_i \}_{i=1}^\infty$ is dense in $H^2(D) \cap H^1_0(D)$. 
\end{theorem}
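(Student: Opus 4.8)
The plan is to use the fact, recalled just before the theorem, that $\|\Delta\,\cdot\,\|_{L^2(D)}$ is an equivalent norm on $V:=H^2(D)\cap H^1_0(D)$, so that $(u,\varphi)\mapsto\int_D\Delta u\,\overline{\Delta\varphi}\,\dif x$ is an admissible inner product making $V$ a Hilbert space, together with the classical fact that the Dirichlet eigenfunctions $\{\phi_i\}_{i=1}^\infty$ form a complete orthogonal system in $L^2(D)$. The key observation is that $\Delta$ maps $\mathrm{span}\{\phi_i\}$ onto itself, since $\Delta\phi_i=-\lambda_i\phi_i$ with $\lambda_i>0$; hence an $L^2(D)$-approximation of $\Delta w$ by finite linear combinations of the $\phi_i$ lifts, via division by the $\lambda_i$, to a $V$-approximation of $w$ itself.

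Concretely, I would fix $w\in V$ and set $f:=\Delta w\in L^2(D)$. By completeness of the Dirichlet eigenfunctions in $L^2(D)$ there are scalars $c_i$ with $f=\sum_{i=1}^\infty c_i\phi_i$, the series converging in $L^2(D)$. Using $\phi_i=-\lambda_i^{-1}\Delta\phi_i$, define the partial sums $w_N:=-\sum_{i=1}^N \lambda_i^{-1}c_i\,\phi_i$, which belong to $\mathrm{span}\{\phi_i\}\subset V$ (each $\phi_i\in V$ by elliptic regularity, as noted in the excerpt) and satisfy $\Delta w_N=\sum_{i=1}^N c_i\phi_i$. Then
\[
\|w-w_N\|_V^2=\|\Delta w-\Delta w_N\|_{L^2(D)}^2=\Big\|f-\sum_{i=1}^N c_i\phi_i\Big\|_{L^2(D)}^2\longrightarrow 0,
\]
so $w$ lies in the $V$-closure of $\mathrm{span}\{\phi_i\}$, which is the assertion.

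An equivalent route, which I could use instead, is to show the orthogonal complement of $\mathrm{span}\{\phi_i\}$ in $V$ is trivial: if $(w,\phi_i)_V=-\lambda_i\int_D\Delta w\,\overline{\phi_i}\,\dif x=0$ for every $i$, then $\Delta w$ is $L^2(D)$-orthogonal to every $\phi_i$, hence $\Delta w=0$ by completeness of the Dirichlet eigensystem, and then $w=0$ because $w\in H^1_0(D)$ and the Poisson problem with zero Dirichlet data is uniquely solvable (equivalently, by the estimate $\|w\|_{H^2(D)}^2\le C\|\Delta w\|_{L^2(D)}^2$ recalled earlier). Either way the argument is short; the only nontrivial inputs are the $L^2(D)$-completeness of the Dirichlet eigenfunctions and the $H^2$ elliptic regularity/well-posedness estimate, both already available, so there is no genuine obstacle beyond bookkeeping. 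The one point to handle carefully is that convergence measured in $\|\Delta\,\cdot\,\|_{L^2(D)}$ is truly convergence in $H^2(D)$, which is precisely the norm equivalence quoted above.
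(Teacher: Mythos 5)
Your proposal is correct, and in fact it contains two valid arguments. Your ``equivalent route'' (showing the $V$-orthogonal complement of $\mathrm{span}\{\phi_i\}$ is trivial) is essentially the paper's proof: the paper takes $f\in V$ with $(f,\phi_i)_V=0$ for all $i$, uses $\Delta\phi_i=-\lambda_i\phi_i$ and Green's identity twice to get $0=\lambda_i^2\int_D\phi_i\,\ov{f}\,\dif x$, and concludes $f=0$ from $L^2$-completeness of the Dirichlet eigensystem; your variant stops one step earlier, deducing that $\Delta w$ is $L^2$-orthogonal to every $\phi_i$, hence $\Delta w=0$ and then $w=0$ from the norm equivalence. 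Both are the same idea, differing only in whether one pushes the orthogonality onto $w$ itself or onto $\Delta w$. Your primary argument is genuinely different in flavor: it is constructive, exploiting that $\Delta$ maps $\mathrm{span}\{\phi_i\}$ onto itself with explicitly invertible action ($\phi_i\mapsto-\lambda_i\phi_i$), so that truncating the $L^2$-expansion of $\Delta w$ and dividing by $-\lambda_i$ produces explicit approximants $w_N$ with $\|w-w_N\|_V=\|\Delta w-\Delta w_N\|_{L^2(D)}\to 0$. This buys you a concrete approximation sequence (useful in spirit for the Dirichlet--Spectral method that follows, since it exhibits the truncated expansion converging in the $H^2$ norm), at the cost of being slightly longer; the paper's duality argument is shorter but nonconstructive. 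You correctly flag the one point that must be checked in either route, namely that $\|\Delta\,\cdot\,\|_{L^2(D)}$ really controls the $H^2(D)$ norm on $H^2(D)\cap H^1_0(D)$, which the paper establishes just before the theorem. No gaps.
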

\begin{proof}
We take the norm on $H^2(D) \cap H^1_0(D)$ to be given by $\|\Delta \cdot \|_{L^2(D)}$ with the associated inner-product. Now, we let $f \in H^2(D) \cap H^1_0(D)$ be orthogonal to $\phi_i$ for all $i \in \N$. Therefore, by appealing to Green's Theorem and equation \eqref{dirichlet-eig} we have that 
\begin{align*}
0 = \int\limits_D  \Delta \phi_i  \Delta \ov{f} \, \text{d}x = - \lambda_i \int\limits_D \phi_i  \Delta \ov{f}  \, \text{d}x = - \lambda_i \int\limits_D \ov{f} \Delta \phi_i \, \text{d}x = \lambda^2_i \int\limits_D \phi_i  \ov{f}  \, \text{d}x.
\end{align*}
This implies that $f$ is orthogonal to $\phi_i$ for all $i \in \N$ with respect to the $L^2(D)$ inner-product. Since $\{ \phi_i \}_{i=1}^\infty$ is an orthogonal basis for $L^2(D)$ this implies that $f=0$, proving the claim. 
\end{proof}

Using Theorem \ref{dense-eig} we have that the zero-index transmission eigenfunctions $w$ satisfying \eqref{zi-varform} can be written as the $H^2(D) \cap H^1_0(D)$ convergent series 
$$w(x) = \sum\limits_{i=1}^{\infty} w_i \phi_i (x) \quad \text{ with complex-valued constants } \,\, w_i $$
where $\phi_i$ satisfy \eqref{dirichlet-eig}. We approximate the eigenfunction by the truncated series 
$$w^{(M)} (x) = \sum\limits_{i=1}^{M} w_i \phi_i (x) \quad \text{ for some fixed } \,\, M  \in \N.$$
Due to the density of the $\phi_i$ it is clear that $\| \Delta (w^{(M)} -  w) \|_{L^2(D)} \to 0$ as $M \to \infty$ (see for e.g. \cite{numerics-book}). We substitute $w^{(M)}$ into the variational formulation \eqref{zi-varform} for the test function $\phi_j$ to obtain a Dirichlet-Spectral approximation of the zero-index transmission eigenvalue problem. This gives that the approximated eigenvalues $k^{(M)}$ satisfy the linear matrix eigenvalue problem
\begin{align}
\left( {\bf A}-\big( k^{(M)}\big)^2{\bf B}\right) \vec{w} =0 \quad \text{ where } \quad \vec{w} \neq 0. \label{g-eig}
\end{align}
We have that the $M \times M$ matrices in the Dirichlet-Spectral approximation of \eqref{zi-varform} are given by
$${\bf A}_{i,j}  = \lambda_i \lambda_j \int\limits_{D} \frac{1}{n(x)}  \phi_i (x) \, \ov{\phi}_j(x) \, \text{d}x$$ 
and
$${\bf B}_{i,j} = \lambda_i \int\limits_{D} \phi_i (x) \,  \ov{\phi}_j(x) \, \text{d}x - \int\limits_{\partial D} \frac{1}{\eta(x) }\,  {\partial_\nu \phi_i (x)}  \, {\partial_\nu  \ov{\phi}_j (x) }\, \text{d}s.$$
where we have used \eqref{dirichlet-eig} along with Green's Theorem to obtain that 
$$\int\limits_{D} \frac{1}{n(x)} \Delta  \phi_i (x) \, \Delta\ov{\phi}_j(x) \, \text{d}x =  \lambda_i \lambda_j \int\limits_{D} \frac{1}{n(x)}  \phi_i (x) \, \ov{\phi}_j(x) \, \text{d}x$$
and 
$$ \int\limits_{D} \grad \phi_i (x) \cdot \grad  \ov{\phi}_j(x) \, \text{d}x =  \lambda_i \int\limits_{D} \phi_i (x) \,  \ov{\phi}_j(x) \, \text{d}x.$$ 
We have that the discretized eigenvalue problem is a selfadjoint generalized eigenvalue problem since the coefficients are real valued. Standard arguments pertaining to the convergence of compact operators \cite{osborn} give the following convergence result. 
\begin{theorem} \label{discrete-eig-conv}
There exists $M$ eigenvalues satisfy \eqref{g-eig} and for each fixed $j$ we have that $k_j^{(M)} \to k_j$ as $M \to \infty$ where $k_j$ is a zero-index transmission eigenvalue satisfying \eqref{zi-varform}. 
\end{theorem}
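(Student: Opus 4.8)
The plan is to recast both the variational problem \eqref{zi-varform} and its Galerkin discretization \eqref{g-eig} as eigenvalue problems for one and the same compact self-adjoint operator and then to invoke the classical spectral approximation theory of \cite{osborn}. Recall that $\mathbb{T}$ and $\mathbb{K}$ on $X := H^2(D)\cap H^1_0(D)$ are bounded and self-adjoint, $\mathbb{T}$ is moreover coercive, $\mathbb{K}$ is compact, and \eqref{zi-varform} is exactly the statement $(\mathbb{T} - k^2\mathbb{K})w = 0$; hence $k$ is a zero-index transmission eigenvalue if and only if $\mu := 1/k^2$ is a nonzero eigenvalue of $\mathbf{T} := \mathbb{T}^{-1}\mathbb{K}$. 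I would endow $X$ with the equivalent inner product $(u,v)_{\mathbb{T}} := (\mathbb{T} u, v)_{H^2(D)}$ (equivalent to the $H^2$ one since $\mathbb{T}$ is coercive and bounded); then $\mathbf{T}$ is compact and self-adjoint on $(X,(\cdot,\cdot)_{\mathbb{T}})$, because $\mathbb{T}$ and $\mathbb{K}$ are both self-adjoint on $(X,(\cdot,\cdot)_{H^2(D)})$. Thus its nonzero eigenvalues form a real sequence $\{\mu_j\}$ with $\mu_j\to 0$, and the zero-index transmission eigenvalues are $k_j = \mu_j^{-1/2}$.

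Next I would identify the matrix problem \eqref{g-eig} with a conforming Galerkin projection of $\mathbf{T}$. With $V_M := \mathrm{span}\{\phi_1,\dots,\phi_M\}$ the span of the first $M$ Dirichlet eigenfunctions of \eqref{dirichlet-eig}, elliptic regularity gives $V_M\subset X$, so the method is conforming; substituting $w^{(M)}=\sum_{i=1}^M w_i\phi_i$ into \eqref{zi-varform} and testing against $\phi_j$ produces precisely the pencil $\mathbf{A}-(k^{(M)})^2\mathbf{B}$, where $\mathbf{A}_{ij}=(\mathbb{T}\phi_i,\phi_j)_{H^2(D)}$ and $\mathbf{B}_{ij}=(\mathbb{K}\phi_i,\phi_j)_{H^2(D)}$ (these agree with the stated matrices after using $-\Delta\phi_i=\lambda_i\phi_i$ and Green's theorem). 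Since $\mathbb{T}$ is coercive, $\mathbf{A}$ is Hermitian positive definite, so \eqref{g-eig} has exactly $M$ eigenvalues counted with multiplicity, all real, and the nonzero ones are the reciprocals of the nonzero eigenvalues of the finite-rank self-adjoint operator $\mathbf{T}_M := P_M \mathbf{T} P_M$, where $P_M$ is the $(\cdot,\cdot)_{\mathbb{T}}$-orthogonal projection of $X$ onto $V_M$; indeed the eigenvectors of $\mathbf{T}_M$ for nonzero eigenvalues lie in $V_M$, so $\mathbf{T}_M$ and the matrix $\mathbf{A}^{-1}\mathbf{B}$ have the same nonzero spectrum.

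Then I would establish the norm convergence $\|\mathbf{T}-\mathbf{T}_M\|_{\mathcal{L}(X)}\to 0$. By Theorem \ref{dense-eig} the union $\bigcup_M V_M$ is dense in $X$, so $P_M\to I$ strongly; since $\mathbf{T}$ is compact and $\mathbf{T}^*=\mathbf{T}$, both $\|(I-P_M)\mathbf{T}\|$ and $\|\mathbf{T}(I-P_M)\|=\|(I-P_M)\mathbf{T}\|$ tend to zero, and from $\mathbf{T}-\mathbf{T}_M=(I-P_M)\mathbf{T}+P_M\mathbf{T}(I-P_M)$ together with $\|P_M\|\le 1$ we get $\|\mathbf{T}-\mathbf{T}_M\|\to 0$. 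Norm convergence of self-adjoint compact operators yields convergence of eigenvalues counted with multiplicity together with the associated spectral projections, which is exactly the content of the spectral approximation result of \cite{osborn}. Consequently, for each fixed $j$, the $j$-th eigenvalue $\mu_j^{(M)}$ of $\mathbf{T}_M$ converges to $\mu_j$, whence $k_j^{(M)}=(\mu_j^{(M)})^{-1/2}\to \mu_j^{-1/2}=k_j$, a zero-index transmission eigenvalue of \eqref{zi-varform}.

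The difficulty here is bookkeeping rather than analysis. One must verify that the trace term in \eqref{zi-varform} is genuinely absorbed into the \emph{compact} operator $\mathbb{K}$ — using the trace theorem together with the compact embedding $H^{1/2}(\partial D)\hookrightarrow L^2(\partial D)$, exactly as in the proof that $\mathbb{K}$ is compact — so that $\mathbf{T}=\mathbb{T}^{-1}\mathbb{K}$ is compact; one must use the elliptic regularity $\phi_i\in H^2(D)\cap H^1_0(D)$ noted after \eqref{dirichlet-eig} to guarantee conformity of the method; and one must index the square roots $k_j=\mu_j^{-1/2}$ consistently on the continuous and discrete sides so that ``the $j$-th eigenvalue'' is unambiguous, which is legitimate because each nonzero $\mu_j$ is isolated of finite multiplicity and the finite sets $\{\mu_j^{(M)}\}$ converge to $\{\mu_j\}$ in the Hausdorff metric.
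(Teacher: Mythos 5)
Your proposal is correct and follows exactly the route the paper intends: the paper offers no written proof, merely citing the spectral approximation theory of compact operators from \cite{osborn}, and your argument (recasting \eqref{zi-varform} as the eigenvalue problem for the compact self-adjoint operator $\mathbb{T}^{-1}\mathbb{K}$ in the $\mathbb{T}$-inner product, identifying \eqref{g-eig} with the conforming Galerkin projection onto the span of the Dirichlet eigenfunctions, and deducing norm convergence from Theorem \ref{dense-eig}) is precisely the ``standard argument'' being invoked. The only point worth watching is that $\mathbb{K}$ is not sign-definite, so the $\mu_j$ come in both signs and the indexing of $k_j=\mu_j^{-1/2}$ must be done separately on the positive and negative parts of the spectrum, a caveat you already note.
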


We are now ready to compute the zero-index transmission eigenvalues using the Dirichlet-Spectral approximation. Since $D$ is the unit circle in $\R^2$ we have that the Dirichlet eigenfunctions and eigenvalues for the Laplacian are given by 
$$\phi_{p,q}(r,\theta) = J_{p}(\tau_{p,q} \, r) \text{e}^{\text{i}p\theta} \quad \text{ and } \quad \lambda_{p,q} = \tau_{p,q}^2$$
where $\tau_{p,q}$ is the $q-$th positive root of $p-$th Bessel function of the first kind for all $p\in \N \cup\{0\}$ and $q \in \N$. In our calculations we take $M=16$ which will correspond to $0\leq p\leq 3$ and $1\leq q \leq 4$ which gives $16 \times 16$ matrices for the generalized eigenvalue problem \eqref{g-eig}. The matrices ${\bf A}$ and ${\bf B}$ are computed using a Gaussian quadrature method where the integrals are written in polar coordinates. The eigenvalues for \eqref{g-eig} are computed using the `eig' function in MATLAB. In order to assure that the method is accurately approximating the zero-index transmission eigenvalues we check the Dirichlet-Spectral approximation v.s. Analytic values given by separation of variables for $n=4$ and $\eta = 1$ which is presented in Table \ref{eig-compare}. In our calculations we also see that  $k = 1.2465\text{i}$, $-1.5596\text{i}$ and $-2.0255\text{i}$ are zero-index transmission eigenvalues for $n=4$ and $\eta = 1$.
\begin{table}[ht!]
\centering  
\begin{tabular}{c c c } 
Approximation $\quad\quad$    &  Analytic  $\quad\quad$ & Relative error    \\ [0.5ex] 
\hline                  
$k_1 = 1.8743 \quad$ \vline& $k_1 = 1.8499 \quad$ \vline&  $0.0132$ \\
$k_2 = 2.5860 \quad$ \vline& $k_2 = 2.5678 \quad$ \vline&  $0.0071$ \\
$k_3 = 3.2481 \quad$ \vline& $k_3 = 3.2299 \quad$ \vline&  $0.0056$ \\
\hline 
\end{tabular}
\caption{Comparison of the Dirichlet-Spectral approximation v.s. Analytic values from separation of variables for $n=4$ and $\eta = 1$ for the first three zero-index transmission eigenvalues. }\label{eig-compare}
\end{table}

We now present some examples with variable coefficients where we check the monotonicity result given in the previous section. Table \ref{eig-compare} shows that our approximation method does compute the eigenvalues with some precision. We now compute the eigenvalues for variable valued refractive index and boundary parameter. To this end, we first check the monotonicity with respect to the refractive index for $\eta =1$ where we take 
$$n_1=4-r^2\left(1-\frac{1}{2}\sin(\theta)\right) \quad \text{ and } \quad n_2=4+r^2\left(1-\frac{1}{2}\sin(\theta)\right).$$
From Theorem \ref{mono2} we have that the $k_j(n_2) \leq k_j(4) \leq k_j(n_1)$ where $k_j (n)$ denotes the zero-index transmission eigenvalues for $n$ with $\eta =1$ is fixed. In Table \ref{eig-mono1} we report the first three eigenvalues for $n_1$ and $n_2$ given by our Dirichlet-Spectral approximation. 
\begin{table}[ht!]
\centering  
\begin{tabular}{c c c } 
 $n_1  \quad\quad$    &  $n=4  \quad\quad$ & $n_2$   \\ [0.5ex] 
\hline                  
$k_1 = 1.9336 \quad$ \vline& $k_1 = 1.8743 \quad$ \vline&  $k_1 = 1.8244$ \\
$k_2 = 2.6747 \quad$ \vline& $k_2 = 2.5860 \quad$ \vline&  $k_2 = 2.5067$ \\
$k_3 = 3.3803 \quad$ \vline& $k_3 = 3.2481 \quad$ \vline&  $k_3 = 3.1327$ \\
\hline 
\end{tabular}
\caption{The first three zero-index transmission eigenvalues for different refractive indices. Here we see the monotonicity with respect to the refractive index for $\eta =1$. }\label{eig-mono1}
\end{table}
Now we check the monotonicity of the  zero-index transmission eigenvalues with respect to the boundary parameter $\eta$. In Table \ref{eig-mono2} we report the first three eigenvalues given by our  approximation with the refractive index $n=4$ where 
$$\eta_1=\frac{1}{1+2\sin^2(\theta)} \quad \text{ and } \quad \eta_2=1+2\sin^2(\theta).$$
Again, Theorem \ref{mono2} gives that $k_j(\eta_2) \leq k_j(1) \leq k_j(\eta_1)$ where $k_j (\eta)$ denotes the zero-index transmission eigenvalues for $\eta$ with $n =4$ is fixed. 
\begin{table}[ht!]
\centering  
\begin{tabular}{c c c } 
 $\eta_1  \quad\quad$    &  $\eta=1  \quad\quad$ & $\eta_2$   \\ [0.5ex] 
\hline                  
$k_1 = 1.9428 \quad$ \vline& $k_1 = 1.8743 \quad$ \vline&  $k_1 = 1.7565$ \\
$k_2 = 2.6375 \quad$ \vline& $k_2 = 2.5861 \quad$ \vline&   $k_2 = 2.5020$ \\
$k_3 = 3.2901 \quad$ \vline& $k_3 = 3.2481 \quad$ \vline&  $k_3 = 3.1691$ \\
\hline 
\end{tabular}
\caption{The first three zero-index transmission eigenvalues for different boundary parameters. Here we see the monotonicity with respect to the boundary parameters for $n =4$.}\label{eig-mono2}
\end{table}
In this example, we compute the zero-index transmission eigenvalue when both $n$ and $\eta$ are non-constant. Just as in the previous examples we see the monotonicity where 
$$k_j(n_2 , \eta_2)  \leq k_j(4,1) \leq k_j(n_1 , \eta_1)$$
where $k_j (n , \eta)$ denotes the zero-index transmission eigenvalues for $n$ and $\eta$.  In Table \ref{eig-mono3} the eigenvalues for the case where both $n$ and $\eta$ are non-constant are reported. 
\begin{table}[ht!]
\centering  
\begin{tabular}{c c c } 
$n_1$ , $\eta_1  \quad\quad$  &  $n=4$ , $\eta=1  \quad\quad$  & $n_2$ , $\eta_2$   \\ [0.5ex] 
\hline                  
$k_1 = 2.0119 \quad$ \vline& $k_1 = 1.8743 \quad$ \vline&  $k_1 = 1.7168$ \\
$k_2 = 2.7329 \quad$ \vline& $k_2 = 2.5860 \quad$ \vline&   $k_2 = 2.4288$ \\
$k_3 = 3.4269 \quad$ \vline& $k_3 = 3.2481 \quad$ \vline&  $k_3 = 3.0587$ \\
\hline 
\end{tabular}
\caption{The first three zero-index transmission eigenvalues for both $n$ and $\eta$ are non-constant.}\label{eig-mono3}
\end{table}
We also consider approximating the refractive index provided $\eta$ is known. To this end, we want to find a constant $n_{\text{approx}}$ such that $k_1(n_{\text{approx}} ) =  k_1(n)$ for the constant refractive index $n=4$ as well as the two given variable refractive indices $n_1$ and $n_2$. In these calculations $\eta=1$ and we approximate the eigenvalue $k_1$ using our approximation for constant $n \in [2,8]$ which is given in Figure \ref{monoplot}. In order to compute the approximation of the refractive index $n_{\text{approx}}$ we find the polynomial interpolation for $k_1(n)$ via the `polyfit' command in MATLAB then `fzero' is used to solve for the approximate. See {\color{black} Figures \ref{monoplot} and \ref{reconplot}} for the estimated $n_{\text{approx}}$. Here we see that $n_{\text{approx}} \approx n$ for the constant refractive index and for the variable refractive indices the approximation $n_{\text{min}} \leq n_{\text{approx}} \leq n_{\text{max}}$ which the theory predicts. In our calculations, we have that for $n_1$ we calculate  $n_{\text{approx}} = 3.7552$ and for  $n_2$ we calculate  $n_{\text{approx}} = 4.2231$
\begin{figure}[ht!]
\centering
\includegraphics[scale=0.18]{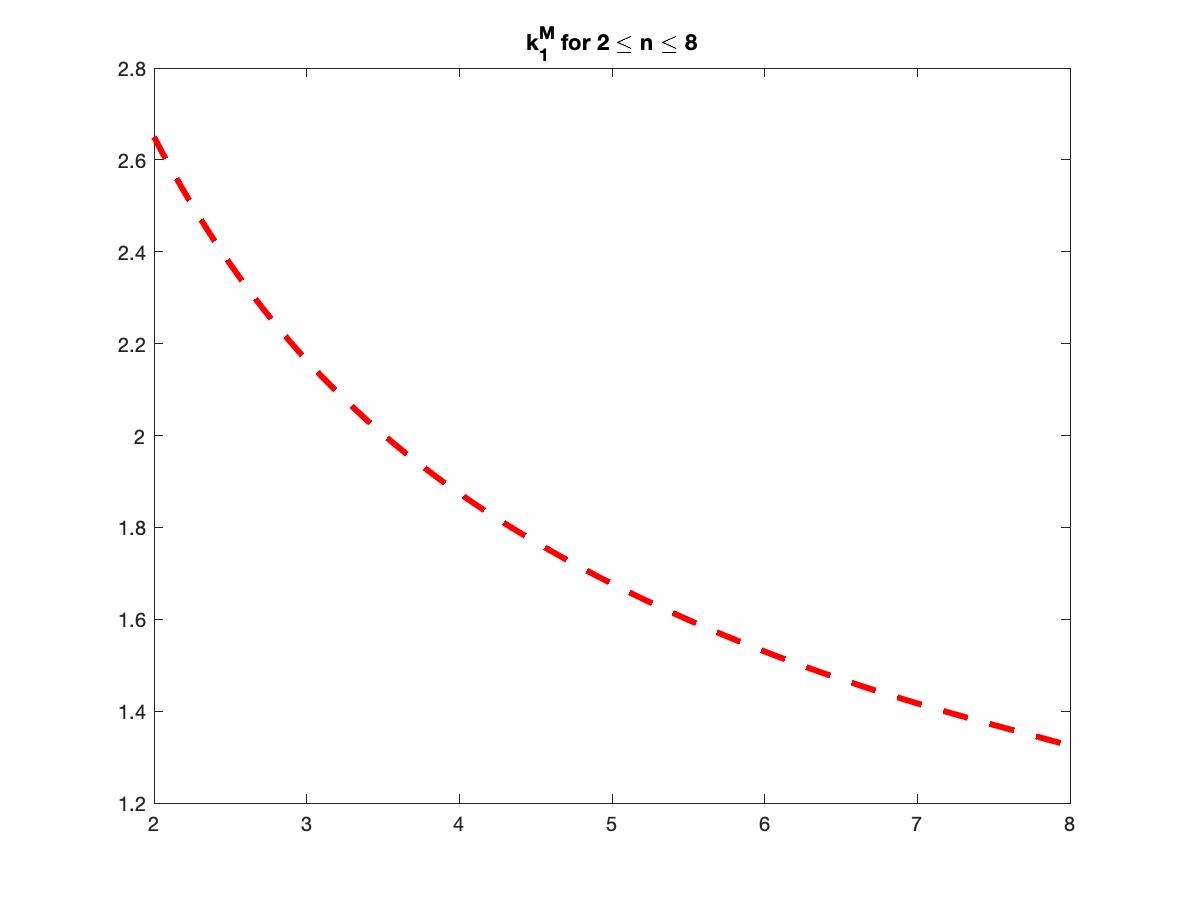}\includegraphics[scale=0.18]{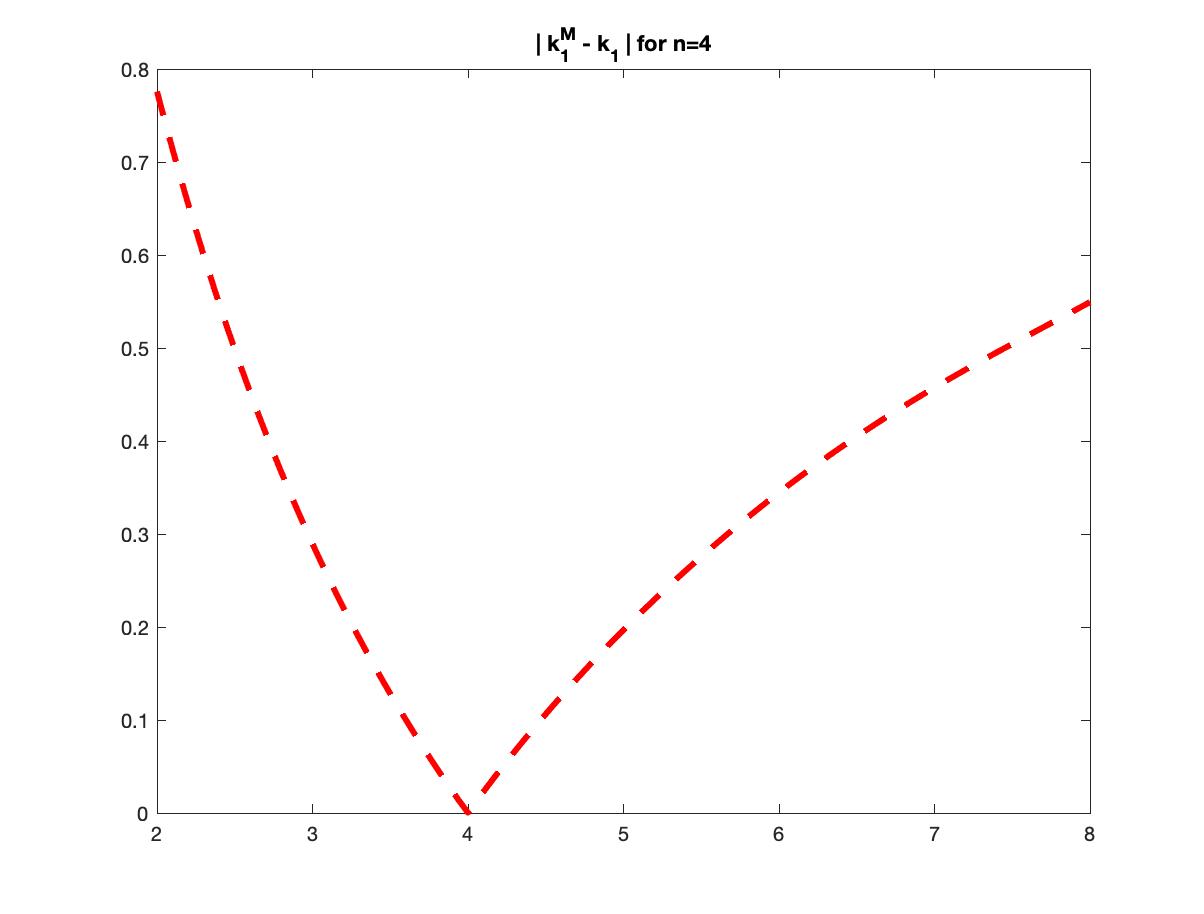}\\
\caption{Left: Plot of $n \mapsto k_1(n)$ for $n \in [2,8]$ using the Dirichlet-Spectral approximation. \\
Right: Plot of $n \mapsto \left| k_1(4) -  k_1(n)\right|$ where the reconstructed $n_{\text{approx}} = 3.9989$. } \label{monoplot}
 \end{figure}
\begin{figure}[ht!]
\centering
\includegraphics[scale=0.18]{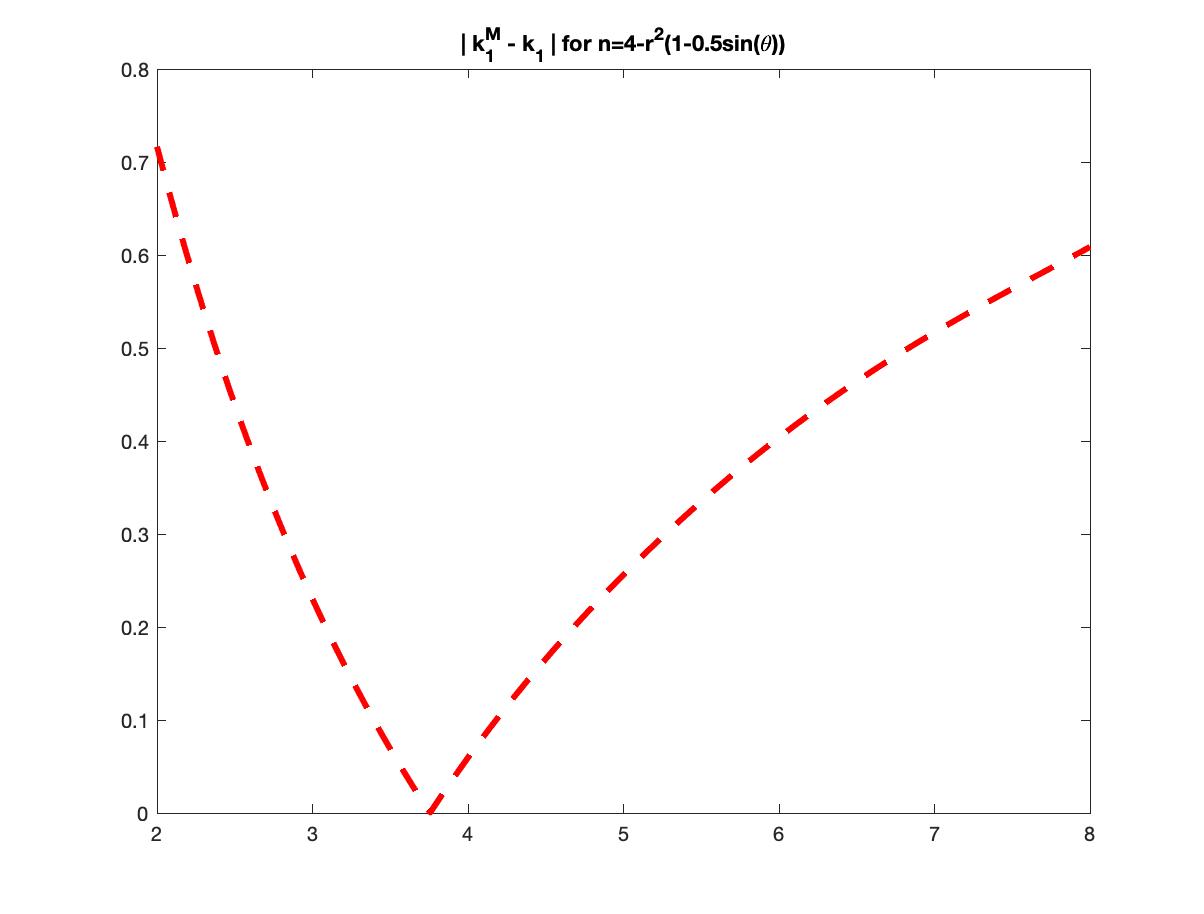}\includegraphics[scale=0.18]{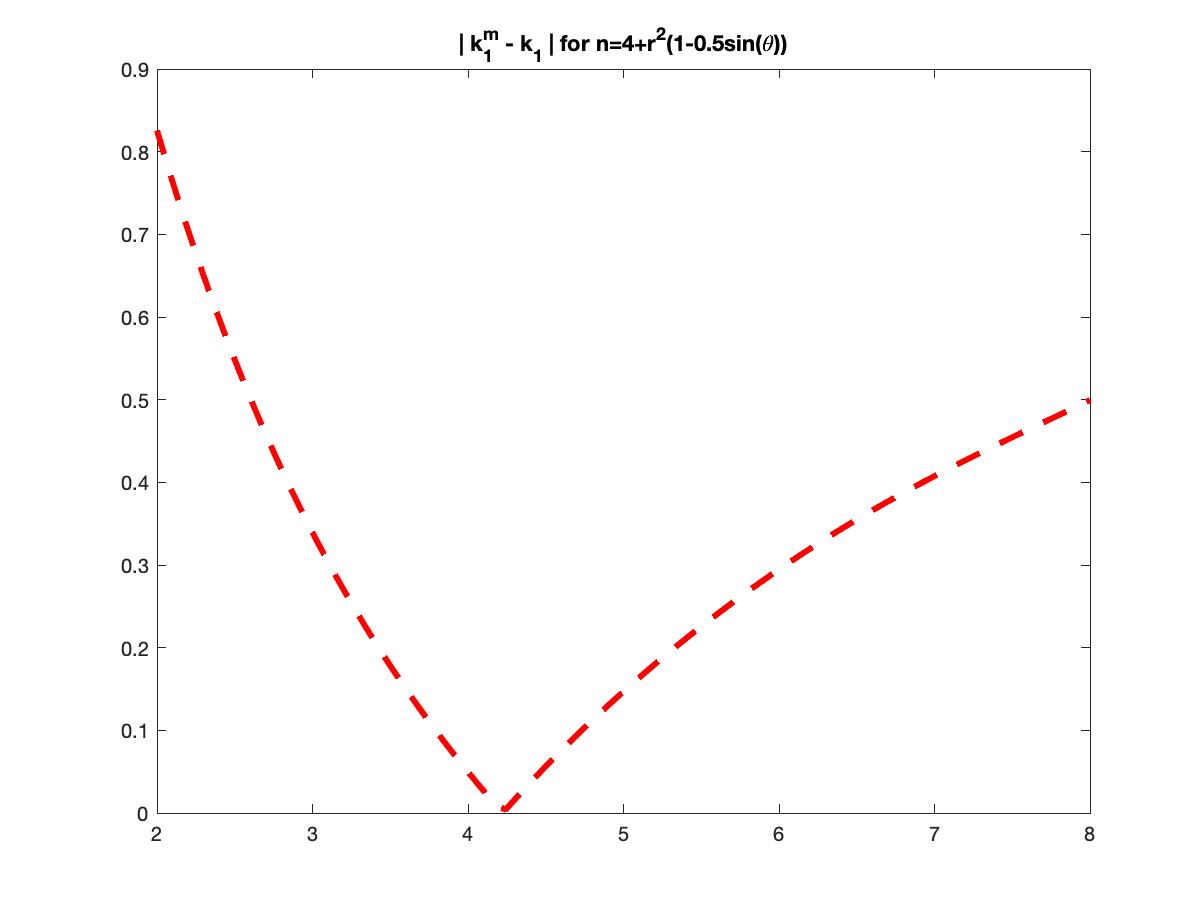}\\
\caption{Left: Plot of $n \mapsto \left| k_1(n_1) -  k_1(n)\right|$ for $n \in [2,8]$ where the reconstructed $n_{\text{approx}} = 3.7552$. \\
Right: Plot of $n \mapsto \left| k_1(n_2) -  k_1(n)\right|$ for $n \in [2,8]$ where the reconstructed $n_{\text{approx}} = 4.2231$. } \label{reconplot}
 \end{figure}

\section{Summary and Conclusions}
Here we have studied two interior transmission eigenvalue problems with the impedance boundary condition for the electromagnetic and acoustic scattering problems. For the inverse spectral problem we have proved monotonicity and limiting results with respect to the material parameters. Numerical examples are given to validate the theoretical monotonicity and convergence results for the acoustic  problem. We have developed a Dirichlet-Spectral approximation for the fourth order eigenvalue problem where more work is needed to study the accuracy of the method. One interesting question that arises from the analysis in this manuscript is what are the asymptotic expansion of the eigenvalues and eigenfunctions as $\eta$ tends to zero or infinity. Another interesting numerical and theoretical question that arises is can the Linear Sampling Method and/or  Inside-outside Duality Method recover the eigenvalues from the far-field pattern.


\end{document}